\newcommand{\nc}{\newcommand}
\nc{\fg}{\mathfrak{f} } \nc{\vg}{\mathfrak{v} } \nc{\wg}{\mathfrak{w} }
\nc{\zg}{\mathfrak{z} } \nc{\ngo}{\mathfrak{n} } \nc{\kg}{\mathfrak{k} }
\nc{\mg}{\mathfrak{m} } \nc{\bg}{\mathfrak{b} } \nc{\ggo}{\mathfrak{g} } \nc{\eg}{\mathfrak{e} }
\nc{\ggob}{\overline{\mathfrak{g}} } \nc{\sog}{\mathfrak{so} }
\nc{\sug}{\mathfrak{su} } \nc{\spg}{\mathfrak{sp} } \nc{\slg}{\mathfrak{sl} }
\nc{\glg}{\mathfrak{gl} } \nc{\cg}{\mathfrak{c} } \nc{\rg}{\mathfrak{r} }
\nc{\hg}{\mathfrak{h} } \nc{\tg}{\mathfrak{t} } \nc{\ug}{\mathfrak{u} }
\nc{\dg}{\mathfrak{d} } \nc{\ag}{\mathfrak{a} } \nc{\pg}{\mathfrak{p} }
\nc{\sg}{\mathfrak{s} } \nc{\affg}{\mathfrak{aff} } \nc{\qg}{\mathfrak{q} } \nc{\lgo}{\mathfrak{l} } \nc{\sping}{\mathfrak{spin} }
\nc{\pca}{\mathcal{P}} \nc{\nca}{\mathcal{N}} \nc{\lca}{\mathcal{L}}
\nc{\oca}{\mathcal{O}} \nc{\mca}{\mathcal{M}} \nc{\tca}{\mathcal{T}}
\nc{\aca}{\mathcal{A}} \nc{\cca}{\mathcal{C}} \nc{\gca}{\mathcal{G}}
\nc{\sca}{\mathcal{S}} \nc{\hca}{\mathcal{H}} \nc{\bca}{\mathcal{B}}
\nc{\dca}{\mathcal{D}} \nc{\eca}{\mathcal{E}} \nc{\wca}{\mathcal{W}}
\nc{\vp}{\varphi} \nc{\ddt}{\tfrac{d}{dt}} \nc{\dsdt}{\tfrac{d^2}{dt^2}} \nc{\dds}{\tfrac{d}{ds}}
\nc{\dpar}{\tfrac{\partial}{\partial t}} \nc{\im}{\mathrm{i}}
\nc{\SO}{\mathrm{SO}} \nc{\Spe}{\mathrm{Sp}} \nc{\Sl}{\mathrm{SL}}
\nc{\SU}{\mathrm{SU}} \nc{\Or}{\mathrm{O}} \nc{\U}{\mathrm{U}} \nc{\Gl}{\mathrm{GL}}
\nc{\Se}{\mathrm{S}} \nc{\Cl}{\mathrm{Cl}} \nc{\Spin}{\mathrm{Spin}}
\nc{\Pin}{\mathrm{Pin}} \nc{\G}{\mathrm{GL}_n(\RR)} \nc{\g}{\mathfrak{gl}_n(\RR)}
\nc{\RR}{{\mathbb R}} \nc{\HH}{{\mathbb H}} \nc{\CC}{{\mathbb C}} \nc{\ZZ}{{\mathbb Z}}
\nc{\FF}{{\mathbb F}} \nc{\NN}{{\mathbb N}} \nc{\QQ}{{\mathbb Q}} \nc{\PP}{{\mathbb P}} \nc{\OO}{{\mathbb O}}
\nc{\vs}{\vspace{.2cm}} \nc{\vsp}{\vspace{1cm}} \nc{\ip}{\langle\cdot,\cdot\rangle}
\nc{\ipp}{(\cdot,\cdot)} \nc{\la}{\langle} \nc{\ra}{\rangle} \nc{\unm}{\tfrac{1}{2}}
\nc{\unc}{\tfrac{1}{4}} \nc{\und}{\tfrac{1}{16}} \nc{\no}{\vs\noindent}
\nc{\lam}{\Lambda^2(\RR^n)^*\otimes\RR^n} \nc{\tangz}{{\rm T}^{\rm Zar}}
\nc{\nor}{{\sf n}}  \nc{\mum}{/\!\!/} \nc{\kir}{/\!\!/\!\!/}
\nc{\Ri}{\tfrac{4\Ric_{\mu}}{||\mu||^2}} \nc{\ds}{\displaystyle}
\nc{\ben}{\begin{enumerate}} \nc{\een}{\end{enumerate}} \nc{\f}{\tfrac}
\nc{\lb}{[\cdot,\cdot]} \nc{\isn}{\tfrac{1}{||v||^2}}
\nc{\gkp}{(\ggo=\kg\oplus\pg,\ip)} \nc{\ukh}{(\ug=\kg\oplus\hg,\ip)}
\nc{\tgkp}{(\tilde{\ggo}=\kg\oplus\pg,\ip)}
\nc{\wt}{\widetilde} 
\nc{\iop}{\mathtt{i}} \nc{\jop}{\mathtt{j}}
\nc{\Hess}{\operatorname{Hess}} \nc{\ad}{\operatorname{ad}}
\nc{\Ad}{\operatorname{Ad}} \nc{\rank}{\operatorname{rk}}
\nc{\Irr}{\operatorname{Irr}} \nc{\End}{\operatorname{End}}
\nc{\Aut}{\operatorname{Aut}} \nc{\Inn}{\operatorname{Inn}}
\nc{\Der}{\operatorname{Der}} \nc{\Ker}{\operatorname{Ker}}
\nc{\Iso}{\operatorname{Iso}} \nc{\Diff}{\operatorname{Diff}}
\nc{\Lie}{\operatorname{L}} \nc{\tr}{\operatorname{tr}} \nc{\dif}{\operatorname{d}}
\nc{\sen}{\operatorname{sen}} \nc{\modu}{\operatorname{mod}}
\nc{\CRic}{\operatorname{PP}} \nc{\Cric}{\operatorname{P}} \nc{\Ricci}{\operatorname{Ric}}
\nc{\sym}{\operatorname{sym}} \nc{\herm}{\operatorname{herm}} \nc{\symac}{\operatorname{sym^{ac}}}
\nc{\symc}{\operatorname{sym^{c}}} \nc{\scalar}{\operatorname{Sc}}
\nc{\grad}{\operatorname{grad}} \nc{\ricci}{\operatorname{Rc}} \nc{\kil}{\operatorname{B}} \nc{\cas}{\operatorname{C}} \nc{\lic}{\operatorname{L}}
\nc{\Nor}{\operatorname{Norm}}  \nc{\ricc}{\operatorname{Rc^{c}}}
\nc{\Ricc}{\operatorname{Ric^{c}}} \nc{\ricac}{\operatorname{Rc^{ac}}}
\nc{\Ricac}{\operatorname{Ric^{ac}}} \nc{\Riem}{\operatorname{Rm}} \nc{\Sec}{\operatorname{Sec}}
\nc{\riccig}{\operatorname{ric^{\gamma}}} \nc{\mm}{\operatorname{m}} \nc{\Mm}{\operatorname{M}}
\nc{\Le}{\operatorname{L}} \nc{\tang}{\operatorname{T}}
\nc{\level}{\operatorname{level}} \nc{\rad}{\operatorname{r}}
\nc{\abel}{\operatorname{ab}} \nc{\CH}{\operatorname{CH}} \nc{\Cone}{{\mathcal C}} \nc{\CCone}{\operatorname{CC}} \nc{\CP}{{\mathcal P}}
\nc{\mcc}{\operatorname{mcc}} \nc{\Adj}{\operatorname{Adj}}
\nc{\Order}{\operatorname{O}}  \nc{\inj}{\operatorname{inj}} \nc{\proy}{\operatorname{pr}}
\nc{\vol}{\operatorname{vol}} \nc{\Diag}{\operatorname{Dg}} \nc{\Diagg}{\operatorname{Diag}}
\nc{\Spec}{\operatorname{Spec}} \nc{\Ima}{\operatorname{Im}} \nc{\Rea}{\operatorname{Re}}
\nc{\spann}{\operatorname{span}} \nc{\Aff}{\operatorname{Aff}} \nc{\E}{\operatorname{E}} \nc{\id}{\operatorname{id}} \nc{\dete}{\operatorname{det}} \nc{\Crit}{\operatorname{Crit}} \nc{\val}{\operatorname{val}}
\theoremstyle{plain}
\newtheorem{theorem}{Theorem}[section]
\newtheorem{proposition}[theorem]{Proposition}
\newtheorem{corollary}[theorem]{Corollary}
\newtheorem{lemma}[theorem]{Lemma}
\theoremstyle{definition}
\newtheorem{definition}[theorem]{Definition}
\theoremstyle{remark}
\newtheorem{remark}[theorem]{Remark}
\newtheorem{example}[theorem]{Example}
\newcommand{\R}{\mathbb R}
\newcommand{\C}{\mathbb C}
\newcommand{\fe}{\mathfrak e}
\newcommand{\fh}{\mathfrak h}
\newcommand{\fk}{\mathfrak k}
\newcommand{\fp}{\mathfrak p}
\newcommand{\ft}{\mathfrak t}
\DeclareMathOperator{\Sp}{Sp}
\newcommand{\so}{\mathfrak{so}}
\newcommand{\su}{\mathfrak{su}}
\newcommand{\spin}{\mathfrak{spin}}
\newcommand{\Id}{\textup{Id}}
\newcommand{\mi}{\mathrm{i}}
\newcommand{\ee}{\varepsilon}
\newcommand{\bb}{b}
\newcommand{\nn}{m}
\newcommand{\II}{\mathbb{I}}
\newcommand{\midop}{\operatorname{mid}}
\title{The stability of standard homogeneous Einstein manifolds}
\author{Emilio A.~Lauret}  
\address{Instituto de Matem\'atica (INMABB), Departamento de Matem\'atica, Universidad Nacional del Sur (UNS)-CONICET, Bah\'ia Blanca, Argentina.}
\email{emilio.lauret@uns.edu.ar}
\author{Jorge Lauret}  
\address{FaMAF, Universidad Nacional de C\'ordoba and CIEM, CONICET (Argentina)}
\email{jorgelauret@unc.edu.ar} 
\thanks{This research was partially supported by grants from FONCyT, Univ.\ Nac.\ de C\'ordoba and Univ.\ Nac.\ del Sur, Argentina.}
\date{\today}
\begin{document}

\begin{abstract}
Back in 1985, Wang and Ziller obtained a complete classification of all homogeneous spaces of compact simple Lie groups on which the standard or Killing metric is Einstein.  The list consists, beyond isotropy irreducible spaces, of $12$ infinite families (two of them are actually conceptual constructions) and $22$ isolated examples.  We study in this paper the nature of each of these Einstein metrics as a critical point of the scalar curvature functional.  
\end{abstract}

\maketitle

\tableofcontents

\section{Introduction}\label{intro}

There is a canonical Riemannian metric on any homogeneous space $M^d=G/K$ of a compact semisimple Lie group $G$ naturally provided by the Killing form $\kil_\ggo$ of the Lie algebra $\ggo$ of $G$.   It is called the {\it standard} or {\it Killing} metric and will be denoted by $g_{\kil}$ in this paper.  Any simply connected compact homogeneous manifold $M$ therefore admits one standard metric for each of its presentations $M=G/K$ as a homogeneous space with $G$ compact semisimple.  

At this point, one might wonder how special is the standard metric from a geometric point of view.  When $M=G/K$ is an irreducible symmetric space, or more in general, an isotropy irreducible homogeneous space (see \cite{Wlf} or \cite[7.47]{Bss}), $g_{\kil}$ is the unique $G$-invariant metric on $M$ up to scaling and it is automatically Einstein.  Standard metrics have nonnegative sectional curvature, as actually any {\it normal} metric (i.e., defined by any bi-invariant inner product on $\ggo$) does.  All these metrics belong to the much wider class of {\it naturally reductive} metrics (with respect to $G$), that is when there is an $\Ad(K)$-invariant decomposition $\ggo=\kg\oplus\pg$ such that the one-parameter subgroups $\exp{tX}\cdot p$, $X\in\pg$ are all the geodesics through any point $p\in M$.  For $G$ simple, the three notions are known to coincide.  

The Einstein condition for $g_{\kil}$ turns out to be quite strong: the Casimir operator of the isotropy $K$-representation must act with the same multiple on each of the irreducible summands.  Indeed, the complete list obtained by Wang and Ziller in \cite{WngZll2} in the case when $G$ is simple, a real {\it tour de force} in representation theory, looks short considering the jungle of all possibilities.  The list consists, beyond isotropy irreducible spaces, of: 
\begin{enumerate}[{\small $\bullet$}]
\item $G$ classical (see Table \ref{tableIA}):
\begin{enumerate}[$\circ$] 
\item $10$ infinite families parametrized by the natural numbers, 

\item $2$ constructions parametrized by $l$-tuples ($l\geq 2$) of certain irreducible symmetric spaces listed in Table \ref{giki}, 

\item and $2$ isolated examples;
\end{enumerate}

\item $G$ exceptional (see Tables \ref{tableIB1}--\ref{tableIB3}): $20$ isolated examples.  
\end{enumerate}
The classification of standard Einstein metrics is still open for non-simple groups (see \cite[Section 4.14]{NknRdnSlv}).   

It is also natural to ask how special is a standard Einstein metric among the space $\mca^G$ of all $G$-invariant metrics on $M$.  Since $G$-invariant Einstein metrics are precisely the critical points of the scalar curvature functional 
$$
\scalar:\mca_1^G\longrightarrow \RR,
$$ 
where $\mca^G_1\subset\mca^G$ is the codimension one submanifold of all metrics of some fixed volume, the nature of $g_{\kil}$ as a critical point may give an insight.  

More in general, Einstein metrics on a compact differentiable manifold $M$ are the critical points of the total scalar curvature functional
\begin{equation*}\label{sct}
\widetilde{\scalar}(g):=\int_M \scalar(g)\; d\vol_g,
\end{equation*}
on the space $\mca_1$ of all Riemannian metrics on $M$ of some fixed volume (see \cite[4.21]{Bss}).  Moreover, if $\widetilde{\scalar}$ is further restricted to the submanifold
$$
\cca_1:=\{ g\in\mca_1:\scalar(g)\,\mbox{is a constant function on}\, M\}, 
$$ 
then the nullity and coindex of critical points are both finite (see \cite[4.60]{Bss}) and so the possibility of having a local maxima for $\widetilde{\scalar}|_{\cca_1}$ comes into play.  Remarkably, certain compact irreducible symmetric spaces are the only known local maxima of $\widetilde{\scalar}|_{\cca_1}$ with $\scalar>0$.

\begin{remark}\label{new}
After the first version of the present paper was uploaded to arXiv, the $G$-stable Einstein metric on $E_7/\SO(8)$ found in \cite{stab-dos} was proved to be stable in \cite{SchSmmWng}, giving the first example of a non-symmetric local maxima of $\widetilde{\scalar}|_{\cca_1}$ with $\scalar>0$ (see \cite{locmax} for further information).  
\end{remark}

The tangent space $T_g\cca_1$ coincides, modulo trivial variations, with the space $\tca\tca_g$ of all divergence-free (or transversal) and traceless symmetric $2$-tensors, so-called TT-{\it tensors} (see \cite[4.44-4.46]{Bss}), and if $\ricci(g)=\rho g$, then the Hessian of $\widetilde{\scalar}$ is given by
\begin{equation}\label{LL-intro}
\widetilde{\scalar}''_g(T,T) = \unm\la(2\rho\id-\Delta_L)T,T\ra_g, \qquad\forall T\in\tca\tca_g,
\end{equation}
where $\Delta_L$ is the {\it Lichnerowicz Laplacian} of $g$ (see \cite[4.64]{Bss}).  Analogously, in the $G$-invariant context, the tangent space $T_g\mca_1^G$ at a metric $g\in \mca_1^G$ is precisely the finite dimensional vector subspace $\tca\tca_g^G$  of $G$-invariant TT-tensors, modulo the trivial variations defined by $N_G(K)\subset\Diff(M)$, the normalizer of $K$ in $G$ (see \cite[Section 3.4]{stab-tres}).  All this naturally motivates the definition of the following concepts.  Let $\lambda_L$ (resp.\ $\lambda_L^G$) denote the smallest eigenvalue of $\Delta_L|_{\tca\tca_g}$ (resp.\ of $\lic_\pg:=\Delta_L|_{\tca\tca_g^G}$).

\begin{definition}\label{stab-def-intro} (see Figure \ref{stabtypes}).  
An Einstein metric $g\in\mca_1$ (resp.\ $g\in\mca_1^G$) with $\ricci(g)=\rho g$ is said to be,  
\begin{enumerate}[{\small $\bullet$}] 
\item {\it stable} (resp.\ $G$-{\it stable}): $\widetilde{\scalar}''_g|_{\tca\tca_g}<0$, i.e., $2\rho<\lambda_L$ (resp.\ $2\rho<\lambda_L^G$).  In particular, $g$ is a local maximum of $\widetilde{\scalar}|_{\cca_1}$ (resp.\ of $\scalar|_{\mca_1^G}$).  

\item {\it semistable} (resp.\ $G$-{\it semistable}): $\widetilde{\scalar}''_g|_{\tca\tca_g}\leq 0$, i.e., $2\rho\leq\lambda_L$ (resp.\ $2\rho\leq\lambda_L^G$); and otherwise {\it unstable} (resp.\ $G$-{\it unstable}).  

\item {\it neutrally stable} (resp.\ $G$-{\it neutrally stable}): $\widetilde{\scalar}''_g|_{\tca\tca_g}\leq 0$ and has nonzero kernel, i.e., $2\rho=\lambda_L$ (resp.\ $2\rho=\lambda_L^G$).     

\item \emph{non-degenerate} (resp.\ \emph{$G$-non-degenerate}): $\widetilde{\scalar}''_g|_{\tca\tca_g}$ (resp.\ $\scalar''_g|_{\tca\tca_g^G}$) is non-degenerate, i.e., $2\rho\notin\Spec(\Delta_L|_{\tca\tca_g})$ (resp.\ $2\rho\notin\Spec(\Delta_L|_{\tca\tca_g^G})$; otherwise, degenerate (resp.\ \emph{$G$-de\-gen\-er\-ate}).  In particular, if $G$ is non-degenerate (resp.\ $G$-non-degenerate) then $g$ is {\it rigid} (resp.\ $G$-{\it rigid}), in the sense that it is an isolated critical point up to the action of $\Diff(M)$ (resp.\ of $N_G(K)$). 

\item {\it dynamically stable}: for any metric $g_0$ near $g$, the normalized Ricci flow starting at $g_0$ exists for all $t\geq 0$ and converges modulo diffeomorphisms, as $t\to\infty$, to an Einstein metric near $g$ (see \cite{Krn2, Krn}).  

\item {\it $\nu$-stable}: $\nu''_g|_{C^\infty(M)g}< 0$ and $\nu''_g|_{\tca\tca_g}< 0$, where $\nu$ is the $\nu$-entropy functional introduced by Perelman, which is strictly increasing along Ricci flow solutions unless the solution consists of a shrinking gradient Ricci soliton like an Einstein metric with $\rho>0$ (see \cite{Prl, CaoHe,WngWng}).      
\end{enumerate}
\end{definition}

The study of the stability types of irreducible compact symmetric spaces was initiated by Koiso in \cite{Kso} and recently concluded in \cite{SemmelmannWeingart} and \cite{Schwahn}.  

In the $G$-invariant setting, the graph theorem \cite[Theorem 3.3]{BhmWngZll} and its generalization, the simplicial complex theorem \cite[Theorem 1.5]{Bhm}, suggest that $G$-instability is an expected behavior.  Indeed, all the Einstein metrics provided by these powerful existence results have positive {\it augmented coindex} (i.e., coindex plus nullity of $\scalar''_g|_{\tca\tca_g^G}$).  It is worth pointing out that, according to  
\cite[Theorem 5.1]{BhmWngZll}, a $G$-unstable Einstein metric does not realize the Yamabe invariant of $M$ (i.e., $\scalar(g)$ is not the supreme among all Yamabe metrics of $M$; recall that a metric is called {\it Yamabe} when it has the smallest scalar curvature in its unit volume conformal class).  There are several examples of unstable Einstein manifolds in the literature (see \cite{stab-dos} and references therein). 

The main result of this paper is the following.  

\begin{theorem}\label{main2}
Let $M=G/K$ be a homogeneous space which is not isotropy irreducible and assume that $G$ is simple and $g_{\kil}$ is Einstein.  Then the $G$-stability and critical point types of $g_{\kil}$ are given as in Tables \ref{tableIA}, \ref{tableIB1}, \ref{tableIB2} and \ref{tableIB3}, with the only exception of the space $\SO(4n^2)/\Spe(n)\times\Spe(n)$ in Table \ref{tableIA}, 3b.  
\end{theorem}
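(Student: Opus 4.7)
The overall strategy I would adopt is case-by-case verification along the Wang--Ziller classification, rather than any single unified argument. The key reduction is that on a naturally reductive space, the Lichnerowicz Laplacian $\Delta_L$ restricted to $G$-invariant sections of $S^2T^*M$ is expressible purely in terms of the Casimir operator $\cas_\pg$ of the isotropy representation, together with bracket terms coming from the $\ad_\kg$-equivariant decomposition of $\pg$. The first step of my plan is therefore to write down, once and for all, a formula for $\lic_\pg=\Delta_L|_{\tca\tca_g^G}$ in terms of the irreducible decomposition $\pg=\pg_1\oplus\cdots\oplus\pg_s$, using that for a standard metric $\cas_\pg$ acts as a scalar $c_i\Id$ on each $\pg_i$, and that the Einstein condition of Wang--Ziller forces all $c_i$ to be equal to some common value $c$. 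This is what produces the very clean formulas $\rho=\tfrac{1}{4}+\tfrac{c}{2}$ type identities that one expects for standard Einstein metrics.

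The second step is to identify the space $\tca\tca^G_g$ concretely. When the isotropy summands $\pg_i$ are pairwise inequivalent, $G$-invariant symmetric $2$-tensors correspond to diagonal matrices $(x_1,\dots,x_s)$ with respect to $g_{\kil}|_{\pg_i}$, and the TT-condition cuts out the hyperplane $\sum x_i\dim\pg_i=0$. When there are equivalences among the $\pg_i$, one must enlarge by Hom-spaces $\Hom_K(\pg_i,\pg_j)^K$ and treat the corresponding off-diagonal blocks; this happens only in a handful of Wang--Ziller entries and can be flagged table by table. On this concrete model $\lic_\pg$ becomes a finite-dimensional symmetric operator whose matrix entries are simple polynomials in the structure constants $[a_{ij}^k]^2=\|\,[\pg_i,\pg_j]_{\pg_k}\|^2$ appearing in the Wang--Ziller paper. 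One then compares $2\rho$ with the smallest eigenvalue $\lambda_L^G$ to read off $G$-stability, and checks $2\rho\in\Spec(\lic_\pg)$ to read off $G$-non-degeneracy.

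The third step is execution: for each of the $12$ infinite families and the $22$ isolated examples in Tables \ref{tableIA}--\ref{tableIB3}, assemble the structure constants, diagonalize $\lic_\pg$, and record $(2\rho,\lambda_L^G)$. For the isolated exceptional examples this is a direct, if tedious, numerical computation; for the infinite families the eigenvalues of $\lic_\pg$ become rational functions in parameters like $n$ or $p,q$, and one must analyze sign changes of $2\rho-\lambda_L^G$ as these parameters vary, separating stable, neutrally stable, and unstable ranges, and watching for finitely many degenerate cases. The two \emph{conceptual} families will have to be handled separately, since their structure constants are not given in closed form but only through an auxiliary transitive action on a sphere or Grassmannian. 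Finally, once $G$-stability is settled, the stronger stability notions (full stability, $\nu$-stability, dynamical stability) follow from the implications recalled in Section 1 together with the information on the sign of the smallest Laplace eigenvalue on functions.

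The principal obstacle I anticipate is precisely the computation that causes the exception in the statement, namely $\SO(4n^2)/\Spe(n)\times\Spe(n)$. The difficulty there is twofold: the isotropy representation contains several large irreducible pieces on which the relevant structure constants depend on $n$ in a way that is not amenable to the uniform branching rules used elsewhere, and at least one eigenvalue of $\lic_\pg$ crosses $2\rho$ in a subtle $n$-dependent manner. I would try to handle this via an explicit model of $\so(4n^2)=\spp(n)\oplus\spp(n)\oplus\pg$ using the tensor product structure $\R^{4n^2}=\Hy^n\otimes_\Hy\Hy^n$, but I expect that a fully uniform answer in $n$ may require additional representation-theoretic input beyond the scope of the general argument, which is why this case is singled out as an exception.
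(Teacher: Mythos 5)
Your overall framework (reduce to the finite-dimensional operator $\lic_\pg$ on $G$-invariant TT-tensors, express its matrix in terms of the structural constants $[ijk]$, and compare the smallest eigenvalue with $2\rho$ case by case along the Wang--Ziller list) is indeed the skeleton the paper uses for the classical families, the two-summand spaces, $\SO(2n)/T^n$, the conceptual construction $\SO(\nn)/K$, and $E_7/\SU(2)^7$. But as a complete plan it has a genuine gap: for a large fraction of the $20$ exceptional-group cases the structural constants are not computable in any reasonable way (the paper itself never obtains them for, e.g., $E_8/\SO(5)$, $E_8/\SO(9)$, $E_8/\SO(5)\times\SO(5)$, $E_8/\SU(3)\times\SU(3)$, $E_8/4\cdot\SO(3)$, or $E_6/\SO(3)^3$, and explicitly abandons a single constant $[112]$ for $\SO(4n^2)/\Spe(n)\times\Spe(n)$ after ``great difficulty''). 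Your proposal offers no mechanism for these cases. The missing idea is the paper's stability criterion: one relates $\lic_\pg$ on $\sym(\pg)^K$ to the Casimir operator $\cas_\tau$ acting on $\sym_0(\ggo)$ via $\la\lic_\pg A,A\ra=\unm\la\cas_\tau\overline{A},\overline{A}\ra-(2\rho-\unm)|A|^2$, and then uses only the two extreme Casimir eigenvalues $\lambda_\tau,\lambda_\tau^{\max}$ (known from representation theory) together with the evaluation of $\cas_\tau$ on the block-diagonal tensor $I_0$ to bound both $\lambda_\pg$ and $8\rho-1$ purely in terms of $\dim\kg$, $\dim\ggo$, $\lambda_\tau$ and $\lambda_\tau^{\max}$. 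This yields stability or local minimality from the single inequality $\dim\kg\lessgtr\dim\ggo(\lambda_\tau-1)/(2\lambda_\tau^{\max})$ (resp.\ $\dim\ggo(\lambda_\tau^{\max}-1)/(2\lambda_\tau)$), with no structural constants and no Einstein constant needed; it settles most exceptional cases, including all the non-multiplicity-free ones, where your diagonal model of $\tca\tca_g^G$ would in any event only give bounds on $\lambda_L^G$ rather than its value.

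Two smaller points. First, in the non-multiplicity-free entries your hyperplane-of-diagonal-tensors model does not exhaust $\sym_0(\pg)^K$, so diagonalizing the $r\times r$ matrix of \eqref{Lpijk} can only certify instability, never stability; you would need either the full $\sym(\pg)^K$ or the criterion above. Second, your diagnosis of the excluded case $\SO(4n^2)/\Spe(n)\times\Spe(n)$ is not quite right: with two inequivalent summands there is a single nonzero eigenvalue of $\lic_\pg$ and no $n$-dependent crossing phenomenon; the obstruction is purely the computation of one structural constant, and the symmetric situation $d_1=d_2$, $[111]=[222]$, $[112]=[122]$ already pins the answer down to exactly two alternatives (global maximum and unique invariant Einstein metric, or local minimum with two further Einstein metrics).
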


Some cases were previously solved in \cite{stab-tres, stab-dos, locmax, Nkn}.  The last column on the right of the tables provides a reference to the place in the paper (or to another paper) where the case was proved.  The smallest eigenvalue $\lambda_L^G$ of $\lic_\pg=\Delta_L|_{\tca\tca_g^G}$ is denoted by $\lambda_\pg$ and the greatest one by $\lambda_\pg^{\max}$ in the tables (see \S\ref{main-sec} for further information on the notation used in the tables).  

A main ingredient in the proof of Theorem \ref{main2} is the following stability criterion for $g_{\kil}$ in terms of only the extremal Casimir eigenvalues $\lambda_\tau$ and $\lambda_\tau^{\max}$ of the representation $\sym(\ggo)$ of $\ggo$ (see Table \ref{table1}), without even involving the Einstein constant $\rho$.  

\begin{theorem}\label{sc-intro} \textup{(See Theorem \ref{sc2}, (i))}.  
The standard metric $g_{\kil}$ on $G/K$ is $G$-stable if 
\begin{equation*}
\frac{\dim{K}}{\dim{G}}<\frac{\lambda_\tau-1}{2\lambda_\tau^{\max}}.
\end{equation*}
\end{theorem}

The proof of this purely Lie theoretical criterion is based on the formula for $\lic_\pg$ given in \cite[Section 5]{stab-tres} and most cases with $\ggo$ exceptional follow from it.   We also strongly use throughout the paper the formula for the matrix of $\lic_\pg$ in terms of structural constants given in \cite{stab-tres, stab-dos}.  

We now discuss some geometric consequences of Theorem~\ref{main2}.
When $\dim{\mca_1^G}>1$, local maxima of $\scalar|_{\mca_1^G}$ are not common.  The local maxima of $\scalar|_{\mca_1^G}$ (not necessarily standard) which had been known before all have either that $K$ is a maximal subgroup of $G$ or $\dim{\mca_1^G}\leq 5$ (see \cite{stab-dos}).  As an application of Theorem \ref{sc-intro}, we have found many unexpected new examples of standard local maxima, including the exceptional full flag manifolds of $E_6$, $E_7$, and $E_8$, which respectively have $\dim{\mca_1^G}= 35, 63, 119$ (see Tables \ref{tableIB2} and \ref{tableIB3}). 

\begin{corollary}\label{main2-cor1}
Among the $20$ spaces with exceptional $G$ in Theorem \ref{main2}, $g_{\kil}$ is a local maximum on $14$ of them and a local minimum on $3$ of them.  
\end{corollary}

In contrast, the standard metric is $G$-unstable on most of the spaces with $G$ classical.  Actually, $g_{\kil}$ is even a local minimum on many of them (see Table \ref{tableIA}).    

It is still an open problem whether there are only finitely many $G$-invariant Einstein metrics (up to homothety) on a given compact homogeneous space $M=G/K$.  This has been conjectured to hold in the multiplicity-free isotropy representation case in \cite{BhmWngZll}.  It follows from the compactness theorem 
\cite[Theorem 1.6]{BhmWngZll} that the conjecture is equivalent to the $G$-{\it rigidity} (i.e., it is an isolated point in the moduli space of all $G$-invariant unit volume Einstein metrics up to homothety) of any $G$-invariant Einstein metric (see \cite[Section 3.1]{stab-tres} for a more detailed treatment). 

Except for a few spaces, $g_{\kil}$ is $G$-non-degenerate in all the cases where $\Spec(\lic_\pg)$ was computed (i.e., in $23$ cases of a total of $32$, see also Table \ref{tableIAA}), which implies the following on $G$-rigidity. 

\begin{corollary}\label{main2-cor2}
The standard metric is an isolated point in the space of all $G$-invariant unit volume Einstein metrics on $M=G/K$ for all the spaces in Theorem \ref{main2} where $\Spec(\lic_\pg)$ is known, except possibly for the spaces $\SO(2n)/T^n$, $n\geq 4$, $E_6/\SO(3)^3$ and $E_8/\SO(9)$.     
\end{corollary}

We also obtained that the standard metric is $G$-degenerate on $\SU(4)/T^3$ (see Table \ref{tableIA}, 1a.2) and $E_6/\SU(2)\times\SO(6)$ (see Table \ref{tableIB1}, 4), but it is known to be $G$-rigid on these spaces (see \cite[pp.78]{Skn} and \cite[IV.16]{DckKrr}, respectively).  We do not know whether the standard metric is indeed $G$-degenerate or not on the spaces $E_6/\SO(3)^3$ (see Table \ref{tableIB1}, 2) and $E_8/\SO(9)$ (see Table \ref{tableIB2}, 9).  Concerning $\SO(2n)/T^n$, $n\geq 4$ (see Table \ref{tableIA}, 1b.2), on which the standard metric has nullity $n-1$, it was proved in \cite{locmax} that one of such null directions gives rise to an inflection point of $\scalar$, showing that it is not a local maximum.   

We next highlight some other general consequences of the theorem.   

\begin{enumerate}[{\small $\bullet$}]    
\item The Lichnerowicz Laplacian restricted to $\tca\tca_g^G$, has either only one or two eigenvalues in all the cases where $\Spec(\lic_\pg)$ was computed, with the only exception of the full flag manifolds $\SO(2n)/T^n$, $n\geq 4$, where $\lic_\pg$ has three different eigenvalues.  

\item Among the homogeneous spaces considered in Theorem \ref{main2}, there are exactly four cases where $K$ is a maximal subgroup of $G$:  
$$
\SO(n^2)/\SO(n)\times\SO(n), \quad \SO(4n^2)/\Spe(n)\times\Spe(n), \quad E_8/\SO(5), \quad E_8/\SU(5)\times\SU(5), 
$$
they all have two isotropy summands (see Table \ref{tableIA}, items 3a and 3b and Table \ref{tableIB2}, items 8 and 11, respectively).  For these spaces, it is well known that there exists a global maximum (see \cite{WngZll}) and at most two other $G$-invariant Einstein metrics (see \cite{DckKrr}).  We obtained that $g_{\kil}$ is indeed the global maximum and it is the only $G$-invariant Einstein metric for the first and fourth spaces and that $g_{\kil}$ is a local maximum on $E_8/\SO(5)$.  However, we were not able to figure out whether $g_{\kil}$ is a local maximum or not on $\SO(4n^2)/\Spe(n)\times\Spe(n)$ due to the huge length of computations needed.  What we know in this case is that either $g_{\kil}$ is a global maximum and the unique $G$-invariant Einstein metric or $g_{\kil}$ is a local minimum and there exist other two $G$-invariant Einstein metrics.    
\end{enumerate}

Concerning the rest of the proof of Theorem \ref{main2}, the nine spaces with two isotropy summands which are not covered by any criterion are case by case worked out in \S\ref{r2-sec}, by combining the three formulas for $\rho$ in terms of the structural constants, the Killing form constants and the Casimir constant, respectively.  

The most difficult case is by far the conceptual construction $\SO(\nn)/K$ listed in Table \ref{tableIA}, items 4 and 5, where the inclusion of $K$ is defined by the isotropy representation of a certain symmetric space (see \S\ref{som-sec}).  We compute the structural constants and detect eigenvectors of $\lic_\pg$ with eigenvalue $\frac{\nn}{2(\nn-2)}$, which is less than $2\rho$ and so $g_{\kil}$ is always $G$-unstable (see Theorem \ref{som-main} for more results on this class of spaces, including the computation of $\Spec(\lic_\pg)$ in several cases).  

The proof of Theorem \ref{main2} concludes in \S\ref{e7-sec} with the space $E_7/\SU(2)^7$, for which the uniform behavior of their structural constants allows to obtain the matrix of $\lic_\pg$ and consequently $\lambda_L^G$ explicitly.   

Finally, for completeness, we compute in \S\ref{rho-sec} the Einstein constants and the spectrum of $\lic_\pg$ for $10$ spaces which were solved without needing these data by the criteria (except Table \ref{tableIB1}, 4).  This information is provided in Table \ref{tableIAA} for the spaces in Table \ref{tableIA} and in the same tables for the exceptional cases.

\vs \noindent {\it Acknowledgements.}  The authors are very grateful with Christopher B\"ohm, McKenzie Wang and Wolfgang Ziller  for many helpful conversations.

\section{Preliminaries}\label{preli} 
We consider a compact connected differentiable manifold $M$ of dimension $d$ and assume that $M$ is homogeneous.  We also fix an almost-effective transitive action of a compact Lie group $G$ on $M$.  The $G$-action determines a presentation $M=G/K$ of $M$ as a homogeneous space, where $K\subset G$ is the isotropy subgroup at some point $o\in M$.  Let $\mca^G$ denote the space of all $G$-invariant Riemannian metrics on $M$, it is an open cone in $\sca^2(M)^G$, the finite-dimensional vector space of all $G$-invariant symmetric $2$-tensors.  Thus $\mca^G$ is a differentiable manifold of dimension between $1$ and $\tfrac{d(d+1)}{2}$.  

As well known, $g\in\mca_1^G$ is Einstein if and only if $g$ is a critical point of the scalar curvature functional $\scalar:\mca_1^G\longrightarrow \RR$, where $\mca^G_1\subset\mca^G$ is the codimension one submanifold of all unit volume metrics.  The stability type of a $G$-invariant Einstein metric on $M$ as a critical point of $\scalar|_{\mca_1^G}$ is therefore encoded in the signature of the second derivative or Hessian $\scalar''$.  We refer to \cite[Section 3]{stab-tres} and \cite[Section 2]{stab-dos} for more detailed treatments.

\begin{figure}
{\small 
$$
\xymatrix{
\underset{\text{Ricci flow}}{\text{dyn.stable}} 
&& \underset{{2\rho\notin\operatorname{Spec}(\Delta_L|_{\mathcal{T}\mathcal{T}_g})}}{\text{nondeg.}} \ar[r]& 
\text{rigid} 
& \underset{2\rho=\lambda_L}{\text{neut.stable}} \ar[d]\ar[ld]
\\ 
\ar[u]\underset{2\rho<\lambda_L \& 2\rho<\lambda_\Delta}{\nu\text{-stable}} \ar[r] & 
\underset{2\rho<\lambda_L}{\text{stable}} \ar[r]\ar[ur] \ar[d]& 
\underset{\text{of}\; \widetilde{\scalar}|_{\mathcal{C}_1}}{\text{\bf loc.max.}} 
\ar[r] \ar[d] &  \underset{2\rho\leq\lambda_L}{\text{semistable}} \ar[d]& \underset{{2\rho\in\operatorname{Spec}(\Delta_L|_{\mathcal{T}\mathcal{T}_g})}} {\text{deg.}}
\\ 
&\underset{2\rho<\lambda_L^G}{G\text{-stable}} \ar[r] \ar[rd]&
\underset{\text{of}\; \scalar|_{\mathcal M_1^G}}{\text{{\bf loc.max.}}} 
\ar[r]&
\underset{2\rho\leq\lambda_L^G}{G\text{-semistable}} &
\underset{2\rho\in\operatorname{Spec}(\Delta_L|_{\mathcal{T}\mathcal{T}_g^G})}{G\text{-deg.}} \ar[u] 
\\ 
&&
\underset{2\rho\notin\operatorname{Spec}(\Delta_L|_{\mathcal{T}\mathcal{T}_g^G})} {G\text{-nondeg.}} \ar[r]& 
G\text{-rigid} & 
\underset{2\rho=\lambda_L^G}{G\text{-neut.stable}} \ar[ul]\ar[u]
}
$$}

{\small	
$$
\xymatrix{
\underset{2\rho>\lambda_L^G}{G\text{-unstable}} \ar[r]&
\underset{2\rho>\lambda_L}{\text{unstable}} \ar[r] &
\underset{2\rho>\lambda_L \,\text{or}\, 2\rho>\lambda_\Delta}{\nu\text{-unstable}} \ar[r]&
\underset{\text{Ricci flow}}{\text{dyn.unstable}}
}
$$}
\caption{Stability and critical point types of an Einstein metric $g$ with $\operatorname{Rc}(g)=\rho g$.}\label{stabtypes}
\end{figure}
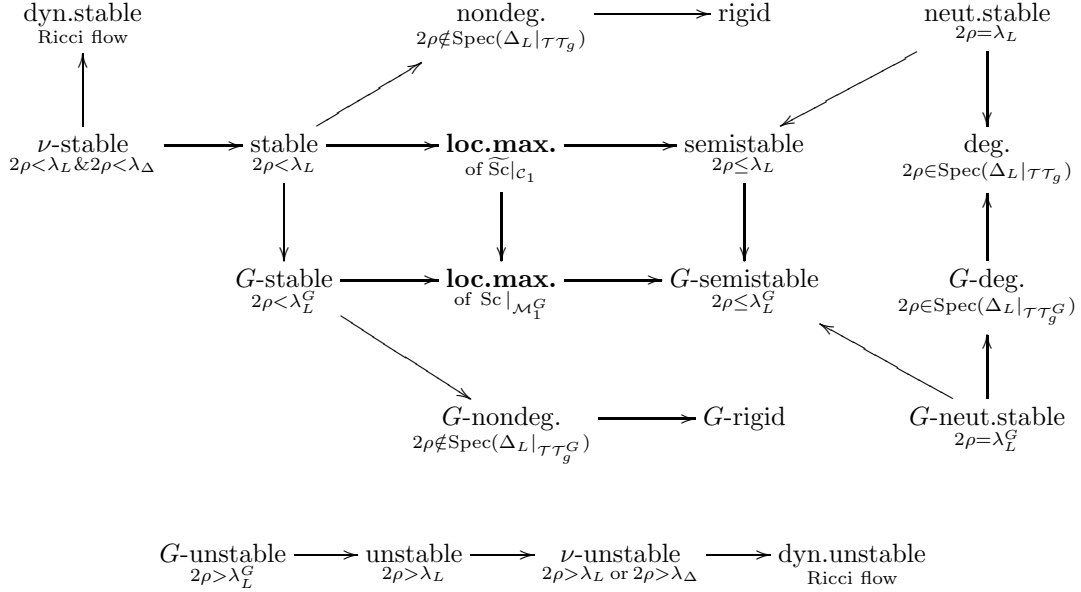

According to \cite[Section 3.4]{stab-tres}, the tangent space decomposes as 
$$
T_g\mca_1^G = T_gN_G(K)^*g \oplus \tca\tca_g^G, 
$$
where $N_G(K)\subset\Diff(M)$ is the normalizer of $K$ and $\tca\tca_g^G$ is the space of $G$-invariant TT-tensors.  Note that $T_gN_G(K)\cdot g$ is the space of trivial variations, that is, the tangent space of the $G$-equivariant isometry class of $g$.  This gives rise to the different notions of $G$-stability given in Definition \ref{stab-def-intro}.  We note that $G$-semistability must hold for any local maximum of $\scalar|_{\mca^G_1}$ and that a $G$-neutrally stable Einstein metric may or may not be a local maximum.   On the other hand, any $G$-unstable metric is a saddle point unless $\scalar''_g|_{\tca\tca_g^G}>0$, in which case $g$ is a local minimum of $\scalar|_{\mca^G_1}$.  The \emph{coindex} is the dimension of the maximal subspace of $\tca\tca_g^G$ on which $\scalar''_g$ is positive definite and the {\it nullity} is $\dim{\Ker\scalar''_g|_{\tca\tca_g^G}}$.   

Let $\ggo=\kg\oplus\pg$ be any reductive decomposition for $M=G/K$, where $\ggo$ and $\kg$ are the Lie algebras of $G$ and $K$, respectively.  This provides the usual identifications $T_oM\equiv\pg$ and  
$$
\sym(\pg)^K:=\{A\in\sym(\pg): [\Ad(K),A]=0\} \equiv \sca^2(M)^G,
$$
where $\sym(\pg):=\{A\in\glg(\pg):A^t=A\}$.  Furthermore, any $g\in\mca^G$ is determined by the $\Ad(K)$-invariant inner product $\ip:=g_o$ on $\pg$.  

The second variation of the total scalar curvature at any Einstein metric $g$ on $M$ (not necessarily homogeneous), say with $\ricci(g)=\rho g$, is given on $\tca\tca_g$ by
\begin{equation}\label{ScLL}
\scalar''_g = \unm(2\rho\id-\Delta_L),
\end{equation}
where $\Delta_L$ is the Lichnerowicz Laplacian of $g$ (see \cite[4.64]{Bss}).  For each $g\in\mca^G$, we consider the self-adjoint operator
$$
\lic_\pg=\lic_\pg(g):\sym(\pg)^K\longrightarrow\sym(\pg)^K,  
$$
such that  
$$
\Delta_LT = \la\lic_\pg A\cdot,\cdot\ra, \qquad\forall T\in\tca\tca_g^G, \quad T=\la A\cdot,\cdot\ra\in \sca^2(M)^G, \quad A\in\sym(\pg)^K,
$$
where $\la A,B\ra:=\tr{AB}$.  

According to \eqref{ScLL}, the $G$-stability type of $g$ is therefore determined by how is the constant $2\rho$ suited relative to the spectrum of $\lic_\pg$.  Let $\lambda_\pg=\lambda_{\pg}(g)$ and $\lambda_\pg^{\max}=\lambda_{\pg}^{\max}(g)$ denote, respectively, the minimum and maximum eigenvalues of  $\lic_\pg$ restricted to the subspace $\tca\tca_g^G$.  Recall that $\lambda_\pg$ was denoted by $\lambda_L^G$ in the introduction.  The characterizations of the $G$-stability types in terms of $\lambda_\pg$ and the Einstein constant $\rho$ were given in Definition \ref{stab-def-intro}.  We note that an Einstein metric $g\in\mca_1^G$ is a local minimum of $\scalar|_{\mca_1^G}$ if $\lambda_\pg^{\max}<2\rho$ and that $\lic_\pg|_{T_gN_G(K)^*g}=2\rho\id$, since $\scalar''_g$ vanishes on trivial variations (cf.\ \cite[(30)]{stab-tres}).  

Assume from now on that the metric $g\in\mca^G$ is \emph{naturally reductive} with respect to $G$ and $\pg$, i.e.,  the map $\ad_\pg{X}:\pg\rightarrow\pg$, $Y\mapsto [X,Y]_\pg$ is skew-symmetric for any $X\in\pg$, where $[X,Y]_\pg$ denotes the projection onto $\pg$ of the Lie bracket $[X,Y]$ relative to $\ggo=\kg\oplus\pg$.  In that case, 
$$
T_g\mca_1^G =\tca\tca_g^G=\sym_0(\pg)^K:=\{ A\in\sym(\pg)^K:\tr{A}=0\},
$$ 
and the Lichnerowicz Laplacian $\lic_\pg$ is given by 
\begin{equation}\label{Lpnr-intro}
\lic_\pg A:=-\unm\sum[\ad_\pg{X_i},[\ad_\pg{X_i},A]], \qquad\forall A\in\sym(\pg)^K,
\end{equation}
where $\{ X_i\}$ is any $g$-orthonormal basis of $\pg$ (see \cite[Section 5]{stab-tres}).  

\begin{example}
If $g_{\kil}$ is the Killing left-invariant metric on any compact simple Lie group $G$, which always satisfies $\ricci(g_{\kil})=\unc g_{\kil}$, then 
$
\lic_\pg(g_{\kil})=\unm \cas_\tau,
$ 
where $\cas_\tau$ is the Casimir acting on the representation $\sym(\ggo)$ of $\ggo$ relative to the Killing form.  Thus the $G$-stability type of $g_{\kil}$ follows from the spectrum of $\cas_\tau$ (see \cite[Table 1]{stab-tres}), which is well known and can be computed using representation theory (see Table \ref{table1}).  Thus $g_{\kil}$ is $G$-stable, except for $\SU(n)$, $n\geq 3$ and $\Spe(n)$, $n\geq 2$, where it is $G$-neutrally stable and $G$-unstable, respectively.  It is proved in \cite{locmax} that on $\SU(n)$, $n\geq 3$ the metric $g_{\kil}$ is not a local maximum of $\scalar|_{\mca_1^G}$ (see also \cite[Theorem 3, 4)]{Nkn}).      
\end{example}

Any orthogonal decomposition $\pg=\pg_1\oplus\dots\oplus\pg_r$ in $\Ad(K)$-invariant subspaces $\pg_1,\dots,\pg_r$ ($d_i:=\dim{\pg_i}$) determines structural constants given by,
\begin{equation}\label{SC}
[ijk]:=\sum_{\alpha,\beta,\gamma} \la[X_\alpha^i,X_\beta^j], X_\gamma^k\ra^2,
\end{equation}
where $\{ X_\alpha^i\}$ is an orthonormal basis of $\pg_i$.  Note that the number $[ijk]$ is invariant under any permutation of $ijk$ by natural reductivity.  Consider the orthonormal subset $\left\{ I_1,\dots, I_r\right\}$ of $\sym(\pg)^K$ given by $I_k|_{\pg_i}:=\delta_{ki}\tfrac{1}{\sqrt{d_k}}I_{\pg_k}$, where $I_\vg$ will always denote the identity map on the vector space $\vg$.

According to \cite[Theorem 5.3]{stab-tres} (see also \cite[Theorem 3.1]{stab-dos}),  
\begin{equation}\label{Lpijk}
\begin{aligned}
\la\lic_\pg I_k, I_k\ra &= \tfrac{1}{d_k}\sum\limits_{j\ne k;i} [ijk], \qquad\forall k, \\ 
\la\lic_\pg I_k, I_m\ra &=  -\tfrac{1}{\sqrt{d_k}\sqrt{d_m}}\sum\limits_{i} [ikm], \qquad\forall k\ne m.  
\end{aligned}
\end{equation}
In the case when $G/K$ is \emph{multiplicity-free}, i.e., the subspaces $\pg_k$'s are $\Ad(K)$-irreducible and pairwise inequivalent, $\left\{ I_1,\dots, I_r\right\}$ is an orthonormal basis of $\sym(\pg)^K$ and so the above numbers are precisely the entries of the matrix of $\lic_\pg$.  
This provides a useful tool to compute the whole spectrum of $\lic_\pg$, in order to establish the $G$-stability type of the Einstein metric $g$.  Note that the vector $(\sqrt{d_1},\dots,\sqrt{d_r})$ is always in the kernel of $\lic_\pg$ since these are precisely the coordinates of the identity map $I_\pg\in\sym(\pg)^K$ (see \eqref{Lpnr-intro}).     

In the general case, i.e., the subspaces $\pg_k$'s are only assumed to be $\Ad(K)$-invariant, the spectrum of the symmetric $r\times r$ matrix defined as in \eqref{Lpijk} (restricted to the hyperplane $\sum d_ia_i=0$ and intersected with $\tca\tca_g^G$ if necessary) is still contained in $[\lambda_\pg, \lambda_\pg^{\max}]$.  In particular, the $G$-instability of $g$ (say with $\ricci(g)=\rho g$) follows as soon as some eigenvalue of such matrix is less than $2\rho$ (see \cite[Remark 3.3]{stab-dos}).

\section{Standard Einstein metrics}\label{standard-sec}

Any homogeneous space $M^d=G/K$ with $G$ compact semisimple admits a canonical $G$-invariant metric $g_{\kil}$, called the \emph{standard} metric and given by $\ip=-\kil_\ggo|_{\pg\times\pg}$, where $\kil_\ggo$ is the Killing form of $\ggo$ and $\ggo=\kg\oplus\pg$ is the $\kil_\ggo$-orthogonal decomposition.  Note that $g_{\kil}$ is clearly naturally reductive with respect to $G$ and $\pg$.  We consider any orthogonal decomposition $\pg=\pg_1\oplus\dots\oplus\pg_r$ in $\Ad(K)$-irreducible subspaces and denote by $\chi$ the isotropy representation of $K$ and $\kg$ on $\pg$.  We refer to \cite[Chapter 1]{WngZll2} and \cite[Chapter 7, Section G]{Bss} for more detailed treatments on standard metrics.

\subsection{Einstein equation} 
The Ricci operator of the standard metric is given by 
\begin{equation}\label{ricgB}
\Ricci(g_{\kil}) = \Mm_{\lb_\pg} + \unm I_\pg = \unc I_\pg + \unm\cas_\chi, 
\end{equation}
where $\Mm_{\lb_\pg}=\unc\sum (\ad_\pg{X_i})^2$ and $\cas_\chi=-\sum (\ad{Z_j}|_\pg)^2$ is the Casimir acting on the isotropy representation $\chi$ with respect to $-\kil_\ggo|_{\kg}$ (see \cite[(16), (31)]{stab-tres} and \cite[Corollary 1.7]{WngZll2}, respectively).  Here $\{ Z_j\}$ and $\{ X_i\}$ are respectively $-\kil_\ggo$-orthonormal basis of $\kg$ and $\pg$.  Note that $\Mm_{\lb_\pg}\leq 0$ and $\cas_\chi\geq 0$.  The following conditions are therefore equivalent: 

\begin{enumerate}[{\small $\bullet$}] 
\item $g_{\kil}$ is Einstein, say $\ricci(g_{\kil}) = \rho g_{\kil}$ (i.e., $\Ricci(g_{\kil}) = \rho I_\pg$).  

\item $\Mm_{\lb_\pg}=(\rho-\unm)I_\pg$ (since $\tr{\Mm_{\lb_\pg}}=-\unc|\lb_\pg|^2$, it follows that $\rho\leq\unm$, where equality holds if and only if $[\pg,\pg]\subset\kg$, i.e., $(G/K,g_{\kil})$ is a locally symmetric space).  

\item $\cas_\chi=(2\rho-\unm)I_\pg$ (in particular, $\unc\leq\rho$, where equality holds if and only if $\kg=0$).  

\item $\cas_{\chi_k}=(2\rho-\unm)I_{\pg_k}$ for every $k=1,\dots,r$, where $\cas_{\chi_k}$ is the Casimir operator with respect to $-\kil_\ggo|_\kg$ acting on the irreducible representation $\pg_k$ of $\kg$ (note that $\cas_{\chi_k}$ is always a multiple of the identity).  
\end{enumerate}

We write the adjoint map by
\begin{equation}\label{adXnr}
\ad{X} = \left[\begin{matrix} 0&a(X)\\ -a(X)^{t}&\ad_\pg{X} \end{matrix}\right], \qquad\forall X\in\pg, 
\end{equation}
where $a(X)^{t}:\kg\rightarrow\pg$ is the adjoint operator of $a(X):\pg\rightarrow\kg$ with respect to $-\kil_\ggo$, i.e., $\la a(X)^{t}Z,Y\ra=-\kil_\ggo(Z,[X,Y])$ for all $Z\in\kg$, $X,Y\in\pg$. 

Most of the following lemma is contained in \cite{WngZll2} (see also \cite[Chapter 7, Section G]{Bss}); we include a proof for completeness.  

\begin{lemma}\label{formulas}
The following formulas hold:
\begin{enumerate}[{\rm (i)}] 
\item $\kil_\chi=-\sum a(X_i)a(X_i)^t$, where $\kil_\chi:\kg\rightarrow\kg$ is defined by $-\kil_\ggo(\kil_\chi Z,Z)=\tr{\chi(Z)^2}$ for all $Z\in\kg$.  

\item $\cas_\chi=\sum a(X_i)^ta(X_i)$.  In particular, $\tr{\cas_\chi}=-\tr{\kil_\chi}$.  

\item $\cas_{\ad_\kg}=I_\kg+\kil_\chi$, where $\cas_{\ad_\kg}$ is the Casimir acting on the adjoint representation of $\kg$ with respect to $-\kil_\ggo|_{\kg}$.   

\item $\sum a(X_i)\ad_\pg{X_i}=0$.  

\item $2\cas_\chi-4\Mm_{\lb_\pg}=I_\pg$.  
\end{enumerate}
\end{lemma}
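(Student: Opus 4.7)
The plan is to deduce (iii), (iv) and (v) from a single master identity, namely $\cas_{\ad_\ggo}=I_\ggo$, which holds because $\kil_\ggo$ is by definition the trace form of the adjoint representation and hence the Casimir of $\ad_\ggo$ computed with respect to $-\kil_\ggo$ is the identity on $\ggo$. Parts (i) and (ii) are independent; they are established first by direct trace computations and then feed into (iii) and (v) respectively.

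\emph{Step 1: parts (i) and (ii).} First I would expand $\tr(\chi(Z)\chi(Z'))$ in the orthonormal basis $\{X_i\}$ of $\pg$, using that $\chi(Z)$ is $\kil_\ggo$-skew (a consequence of Ad-invariance) and that \eqref{adXnr} gives $[Z,X_i]=a(X_i)^t Z$. The adjoint relation $\la a(X)Y,W\ra_\kg=\la Y,a(X)^t W\ra_\pg$ then collapses the sum to $\sum_i \kil_\ggo(a(X_i)a(X_i)^t Z,Z')$, and comparison with the defining identity $-\kil_\ggo(\kil_\chi Z,Z')=\tr(\chi(Z)\chi(Z'))$ yields (i). For (ii), I would evaluate both sides against an arbitrary pair $Y,Y'\in\pg$: on one hand $\la\cas_\chi Y,Y'\ra=\sum_j\la[Z_j,Y],[Z_j,Y']\ra$, and on the other hand $\la\sum_i a(X_i)^t a(X_i)Y,Y'\ra=\sum_i\la[X_i,Y]_\kg,[X_i,Y']_\kg\ra$. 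Expanding $[Z_j,Y]\in\pg$ in the $\pg$-basis $\{X_i\}$ (respectively $[X_i,Y]_\kg\in\kg$ in the $\kg$-basis $\{Z_j\}$) and using Ad-invariance to rewrite each expansion coefficient as $\kil_\ggo(Z_j,[X_i,Y])$, both sums collapse to the same double sum over $(i,j)$, proving (ii). The trace identity $\tr\cas_\chi=-\tr\kil_\chi$ stated inside (ii) then follows by tracing (i) and (ii).

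\emph{Step 2: parts (iii), (iv) and (v).} I would decompose
$$
\cas_{\ad_\ggo}=-\sum_j \ad(Z_j)^2-\sum_i \ad(X_i)^2=I_\ggo
$$
block-wise relative to $\ggo=\kg\oplus\pg$. Since $[\kg,\kg]\subset\kg$ and $[\kg,\pg]\subset\pg$, each $\ad(Z_j)$ is block-diagonal, acting as $\ad_\kg Z_j$ on $\kg$ and as $\chi(Z_j)$ on $\pg$. Squaring \eqref{adXnr} shows that $\ad(X_i)^2$ has $\kg\to\kg$ block $-a(X_i)a(X_i)^t$, $\pg\to\kg$ block $a(X_i)\ad_\pg X_i$, $\kg\to\pg$ block $-\ad_\pg(X_i)\,a(X_i)^t$, and $\pg\to\pg$ block $-a(X_i)^t a(X_i)+(\ad_\pg X_i)^2$. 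Reading off $\cas_{\ad_\ggo}=I_\ggo$ on each block then gives: on $\kg\to\kg$, $\cas_{\ad_\kg}+\sum_i a(X_i)a(X_i)^t=I_\kg$, which combined with (i) is (iii); on $\pg\to\kg$, $\sum_i a(X_i)\ad_\pg X_i=0$, which is (iv); and on $\pg\to\pg$, $\cas_\chi+\sum_i a(X_i)^t a(X_i)-4\Mm_{\lb_\pg}=I_\pg$, which combined with (ii) is (v).

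\emph{Main obstacle.} Nothing in either step is technically deep; the whole argument is essentially bookkeeping built on the master identity $\cas_{\ad_\ggo}=I_\ggo$ and the block form \eqref{adXnr}. The only place that really calls for care is Step 1, where one must keep signs straight while moving between $\kil_\ggo$ and the two sign-opposite inner products $-\kil_\ggo|_\kg$ and $-\kil_\ggo|_\pg$, and must set up the two Parseval-type dual expansions in (ii) consistently. Step 2 is then a mechanical block-matrix multiplication.
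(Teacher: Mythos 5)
Your proposal is correct and follows essentially the same route as the paper: parts (i) and (ii) by direct trace/Parseval computations in the orthonormal bases, and parts (iii)--(v) by reading off the blocks of $\cas_{\ad}=I_\ggo$ relative to $\ggo=\kg\oplus\pg$ using the block form \eqref{adXnr}. The block matrix you compute for $\cas_{\ad}$ is exactly the one displayed in the paper's proof, so there is nothing to add.
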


\begin{remark}
Using part (ii), each of the formulas for the Ricci operator given in \eqref{ricgB} follows from the other one.  It can be easily shown that $A=\la \cas_\chi\cdot,\cdot\ra$ is precisely the symmetric bilinear form defined in \cite[(1.1)]{WngZll2}.  
\end{remark}

\begin{proof}
Part (i) follows from
\begin{align*}
\tr{\chi(Z)^2} =& \sum \la[Z,[Z,X_i]],X_i\ra = -\sum \la[X_i,Z],[X_i,Z]\ra \\ 
=& \sum -\kil_\ggo((\ad{X_i})^2Z,Z) = \sum-\kil_\ggo(- a(X_i)a(X_i)^tZ,Z),
\end{align*}
and part (ii) from 
\begin{align*}
\la\cas_\chi X,X\ra =& -\sum \la(\ad{Z_j})^2X,X\ra = \sum \la[Z_j,X],X_i\ra^2 \\ 
=& \sum -\kil_\ggo([X_i,X]_\kg,[X_i,X]_\kg) = \sum\la a(X_i)^ta(X_i)X,X\ra.  
\end{align*}
On the other hand, it is easy to see that the Casimir acting on the adjoint representation of $\ggo$ with respect to $-\kil_\ggo$, $\cas_{\ad}=-\sum (\ad{Z_j})^2-\sum (\ad{X_i})^2$, is given by 
$$
\cas_{\ad} = \left[\begin{matrix} \cas_{\ad_\kg}-\kil_\chi & -\sum a(X_i)\ad_\pg{X_i}\\ 
\sum \ad_\pg{X_i}a(X_i)^t & 2\cas_\chi-4\Mm_{\lb_\pg} \end{matrix}\right].   
$$
The remaining parts therefore follow from the fact that $\cas_{\ad}=I_\ggo$.  
\end{proof}

\subsection{Einstein constant}
We take a decomposition $\kg=\kg_1\oplus\dots\oplus\kg_s$ in ideals of $\kg$ and assume that $\kil_{\kg_i}=c_i\kil_\ggo|_{\kg_i}$ for some $c_i\in\RR$, for each $i=1,\dots,s$.  Note that $0\leq c_i\leq 1$, $c_i=0$ if and only if $\kg_i$ is abelian and $c_i=1$ if and only if $\kg_i$ is an ideal of $\ggo$ (see \cite[Theorem 11, pp.35]{DtrZll} for more information on these constants $c_i$'s).  This assumption in particular holds if each $\kg_i$ is either simple or abelian.   It is easy to see that $\cas_{\ad_\kg}$ is the block map $[c_1I_{\kg_1},\dots,c_sI_{\kg_s}]$ and so by Lemma \ref{formulas}, (iii),  
\begin{equation}\label{Bchi}
\kil_\chi=[(c_1-1)I_{\kg_1},\dots,(c_s-1)I_{\kg_s}]. 
\end{equation}  

If $g_{\kil}$ is Einstein, then 
\begin{equation}\label{ci}
\sum_{i=1}^s(1-c_i)\dim{\kg_i} = -\tr{\kil_\chi} = \tr{\cas_\chi} = (2\rho-\unm)d,
\end{equation}
from which the following useful formula for the Einstein constant follows (cf.\ \cite[pp.34]{DtrZll}, \cite[pp.568]{WngZll2} and \cite[(7.94)]{Bss}, where the following typo appears: the sum should be from $i=0$ to $r$):  
\begin{equation}\label{rhoci}
\rho=\unc + \tfrac{1}{2d}\sum_{i=1}^s(1-c_i)\dim{\kg_i}.  
\end{equation}
In particular, if $\kg$ is abelian then $\rho=\unc+\tfrac{1}{2d}\dim{\kg}$ and, if $[\pg,\pg]\subset\kg$ (i.e.\ $M=G/K$ is a symmetric space) then $c_i=\tfrac{2\dim{\kg}-d}{2\dim{\kg}}$ by \cite[Theorem 11, (i)]{DtrZll} and so $\rho=\unm$.

A well-known formula for the Ricci eigenvalues of $g_{\kil}$ in terms of the structural constants defined in \eqref{SC} (see e.g.\ \cite[(34)]{stab-tres} or \cite[(18)]{stab-dos}) is given by 
\begin{equation}\label{rhoijk}
\rho_k = \unm - \tfrac{1}{4d_k}\sum_{i,j} [ijk],\qquad\forall k=1,\dots,r,
\end{equation}
and it follows from \cite[Lemma (1.5)]{WngZll} that 
\begin{equation}\label{sumijk}
\sum_{i,j} [ijk] = d_k(1-2a_k), \qquad\forall k=1,\dots,r, 
\end{equation}
where $\cas_{\chi_k}=a_kI_{\pg_k}$.  Recall that $g_{\kil}$ is Einstein if and only if $a_1=\dots =a_r$ (in that case, $\rho=\unc+\unm a_1$), and that $d=d_1+\dots+d_r=\dim{\pg}=\dim{\ggo}-\dim{\kg}$.

\begin{table}
{\small 
$$
\begin{array}{c|c|c|c|c|c}
\text{No.} & \ggo/\kg & \text{Condition} & r  & G\text{-stab.\ type} & \text{Ref.} 
\\[2mm] \hline \hline \rule{0pt}{14pt}
1a.1  & \tfrac{\sug(3)}{\sg(3\cdot\ug(1))}  & - & 3   & G\text{-unst., loc.min.} &\text{\cite{stab-tres}, \cite{Nkn}}
\\[2mm]  \hline \rule{0pt}{14pt}
1a.2 & \tfrac{\sug(4)}{\sg(4\cdot\ug(1))}  & - & 6   & 
{
	\underset{\text{coindex}=3}{G\text{-unst.}}, \underset{\text{nullity}=2}{G\text{-deg.}} 
}
&{\text{\cite{stab-tres}}}
\\[2mm]  \hline \rule{0pt}{14pt}
1a.3 & \tfrac{\sug(n)}{\sg(n\cdot\ug(1))} & n\geq 5 & \tfrac{n(n-1)}{2}   & 
{ 
	\underset{\text{coindex}=n-1}{G\text{-unst.}} 
}, \text{saddle}
&{\text{\cite{stab-tres}}}
\\[2mm]  \hline \rule{0pt}{14pt}
1b.1 & \tfrac{\sog(6)}{3\cdot\sog(2)}  & - & 6  & 
{ 
	\underset{\text{coindex}=3}{G\text{-unst.}}, \underset{\text{nullity}=2}{G\text{-deg.}}  
}, \text{saddle}
&\text{\S\ref{so2n-sec}, \cite{locmax}}
\\[2mm]  \hline \rule{0pt}{14pt}
1b.2 & \tfrac{\sog(2n)}{n\cdot\sog(2)} & n\geq 4 & n(n-1)   & 
{ 
	\underset{\text{nullity}=n-1}{G\text{-neut.stab.}} 
}, \text{saddle}
&\text{\S\ref{so2n-sec}, \cite{locmax}}
\\[2mm]  \hline \rule{0pt}{14pt}
2a & \tfrac{\sug(nk)}{\sg(n\cdot\ug(k))}  & k\geq 2, n\geq 3 & \tfrac{n(n-1)}{2}   & G\text{-unst.} &{\text{\cite{stab-tres}}}
\\[2mm]  \hline \rule{0pt}{14pt}
2b & \tfrac{\spg(nk)}{n\cdot\spg(k)}  & k\geq 1, n\geq 3 & \tfrac{n(n-1)}{2}  & G\text{-unst.} &{\text{\cite{stab-tres}}}
\\[2mm]  \hline \rule{0pt}{14pt}
2c & \tfrac{\sog(nk)}{n\cdot\sog(k)}  & k\geq 3, n\geq 3 & \tfrac{n(n-1)}{2}   & G\text{-unst.} &{\text{\cite{stab-tres}}}
\\[2mm]  \hline \rule{0pt}{14pt}
 3a  & \tfrac{\sog(n^2)}{\sog(n)\oplus\sog(n)} & n\geq 3 & 2  & G\text{-stab., glob.max.} & { \S\ref{A.3a}}
\\[2mm]  \hline \rule{0pt}{14pt}
 3b  & \tfrac{\sog(4n^2)}{\spg(n)\oplus\spg(n)} & n\geq 2 & 2  &  &{ \S\ref{A.3b}}
\\[2mm]  \hline \rule{0pt}{14pt}
 4 & 
 \tfrac{\sog(n)}{\kg} 
   & n=\dim{\kg} & 
 	\tfrac{l(l+1)}{2} & 

 	\underset{\text{coindex}=l-1}{G\text{-unst.}} 
, \text{saddle}
 &\S\ref{som-sec}
\\[2mm]  \hline \rule{0pt}{14pt}
5 & \tfrac{\sog(\nn)}{\kg_1\oplus\dots\oplus\kg_l} & l\geq 2 & 
	\text{see \eqref{eq-conceptual:r}}
&
 	G\text{-unst.}
&\S\ref{som-sec}
\\[2mm]  \hline \rule{0pt}{14pt}
6 & \tfrac{\sug(pq+l)}{\sug(p)\oplus\sug(q)\oplus\ug(l)} & p,q\geq 2, l\geq 3 & 2 & G\text{-unst., loc.min.} & \S\ref{A.6}
\\[2mm]  \hline \rule{0pt}{14pt}
7a & \tfrac{\spg(3n-1)}{\spg(n)\oplus\ug(2n-1)} & n\geq 1 & 2  & G\text{-unst., loc.min.} &\text{Cor.}\ref{sc1},\S\ref{A.7a}, \text{\cite{Nkn}}
\\[2mm]  \hline \rule{0pt}{14pt}
7b & \tfrac{\sog(3n+2)}{\sog(n)\oplus\ug(n+1)} & n\geq 3 & 2 & G\text{-unst., loc.min.} & \text{Cor.}\ref{sc1}, \text{\cite{Nkn}}
\\[2mm]  \hline \rule{0pt}{14pt}
8 & \tfrac{\sog(26)}{\spg(1)\oplus\spg(5)\oplus\sog(6)} & - & 2  & G\text{-unst., loc.min.}  &\S\ref{A.8} 
\\[2mm]  \hline \rule{0pt}{14pt}
9 & \tfrac{\sog(8)}{\ggo_2} & - & 2 \; \text{(nmf)} & G\text{-unst., loc.min.} &\S\ref{A.9}, \text{\cite{Gtr}, \cite{Nkn}}
%
\\[2mm] \hline\hline
\end{array}
$$}
\caption{\cite[Table IA, pp.577]{WngZll2}.  See \cite[Table 2]{stab-tres} for more stability information on the cases 2a, 2b and 2c. In case $6$, $pql=p^2+q^2+1$.  
See Theorem~\ref{som-main} for information about the coindex in case 5. 
}\label{tableIA}
\end{table}

\begin{table}
{\small 
$$
\begin{array}{c|c|c|c|c|c|c|c|c}
\text{No.} & \ggo/\kg & r & \rho & \lambda_\pg  & G\text{-stab.\ type} & \text{C1} & \text{C2} & \text{Ref.}
\\[2mm] \hline \hline \rule{0pt}{14pt}
1 & \tfrac{\fg_4}{\sping(8)} & 3 & \tfrac{4}{9} & \tfrac{1}{3} & G\text{-unst., loc.min.} & \checkmark & \checkmark &\text{\cite{stab-dos}}, \text{Cor.}\ref{sc2-cor}, \text{\cite{Nkn}} 
\\[2mm]  \hline \rule{0pt}{14pt}
2 & \tfrac{\eg_6}{\sog(3)\oplus\sog(3)\oplus\sog(3)} & 5\; \text{(nmf)} & \tfrac{5}{16}^* & &G\text{-semistab.}  & \checkmark  & \checkmark & \text{Cor.}\ref{sc2-cor} 
\\[2mm]  \hline \rule{0pt}{14pt}
3 & \tfrac{\eg_6}{\sping(8)\oplus\RR^2}  & 3 & \tfrac{5}{12} &\unm  & G\text{-unst., loc.min.} & \checkmark &\text{No}&\text{\cite{stab-dos}}, \text{Cor.}\ref{sc1}, \text{\cite{Nkn}} 
\\[2mm]  \hline \rule{0pt}{14pt}
4 & \tfrac{\eg_6}{\sug(2)\oplus\sog(6)}  & 2 & \tfrac{3}{8}^* & \tfrac{3}{4}^* & G\text{-neut.stab., saddle} & \text{No} & \text{No}&\S\ref{B.4}
\\[2mm]  \hline \rule{0pt}{14pt}
5 & \tfrac{\eg_7}{\sog(8)} & 3 & \tfrac{13}{36} & \tfrac{5}{6} & G\text{-stab., loc.max.} & \text{No} &\text{No}&\text{\cite{stab-dos}}
\\[2mm]  \hline \rule{0pt}{14pt}
6 & \tfrac{\eg_7}{\sping(8)\oplus 3\cdot\sug(2)} & 3 & \tfrac{7}{18} &\tfrac{2}{3}  & G\text{-unst., loc.min.} & \text{No} & \text{No}&\text{\cite{stab-dos}}
\\[2mm] \hline\hline
\end{array}
$$}
\caption{\cite[Table IB, pp.578]{WngZll2}.  Excepting the item 2, in all cases $\lambda_\pg$ is the only nonzero eigenvalue of $\lic_\pg$ and has multiplicity $r-1$.} \label{tableIB1}
\end{table}

\begin{table}
{\small 
$$
\begin{array}{c|c|c|c|c|c|c|c|c}
\text{No.} & \ggo/\kg & r & \rho & \lambda_\pg & G\text{-stab.\ type}  & \text{C1} & \text{C2} & \text{Ref.}
\\[2mm] \hline \hline \rule{0pt}{14pt}
7 & \tfrac{\eg_7}{7\cdot\sug(2)} & 7 & \tfrac{1}{3}^* &\tfrac{7}{9}^* & G\text{-stab., loc.max.} & \text{No} &\text{No}& \S\ref{e7-sec}
\\[2mm]  \hline \rule{0pt}{14pt}
8 & \tfrac{\eg_8}{\sog(5)}  & 2 & && G\text{-stab., loc.max.} &\checkmark & \checkmark &\text{Cor.}\ref{sc2-cor}
\\[2mm]  \hline \rule{0pt}{14pt}
9 & \tfrac{\eg_8}{\sog(9)} & 3\; \text{(nmf)} & \tfrac{13}{40}^* && G\text{-semistab.} & \checkmark  & \checkmark & \text{Cor.}\ref{sc2-cor}
\\[2mm]  \hline \rule{0pt}{14pt}
10 & \tfrac{\eg_8}{\sping(9)}  & 2 & \tfrac{13}{40}^* & \tfrac{53}{60}^* & G\text{-stab., loc.max.} & \checkmark^*  & \checkmark^*&\text{Cor.}\ref{sc2-cor}, \S\ref{B.10}
\\[2mm]  \hline \rule{0pt}{14pt}
11 & \tfrac{\eg_8}{\sug(5)\oplus\sug(5)} & 2 & \tfrac{7}{20}^* &\tfrac{4}{5}^* & G\text{-stab., glob.max.} & \text{No} &\text{No}& \S\ref{B.11}
\\[2mm]  \hline \rule{0pt}{14pt}
12 & \tfrac{\eg_8}{4\cdot\sug(3)}  & 4 & \tfrac{19}{60}^* & \tfrac{4}{5}^* & G\text{-stab., loc.max.} & \checkmark & \checkmark&\text{Cor.}\ref{sc2-cor}
\\[2mm]  \hline \rule{0pt}{14pt}
13 & \tfrac{\eg_8}{4\cdot\sog(3)}  & 9\; \text{(nmf)} & \tfrac{11}{40}^* && G\text{-stab., loc.max.} &\checkmark& \checkmark&\text{Cor.}\ref{sc2-cor}
\\[2mm]  \hline \rule{0pt}{14pt}
14 & \tfrac{\eg_8}{\sping(8)\oplus\sping(8)}  & 3 & \tfrac{11}{30} & \tfrac{4}{5} & G\text{-stab., loc.max.} & \text{No} & \text{No}&\text{\cite{stab-dos}}
\\[2mm] \hline\hline
\end{array}
$$}
\caption{\cite[Table IB, pp.579]{WngZll2}.  In all the cases where it is provided, $\lambda_\pg$ is the only nonzero eigenvalue of $\lic_\pg$ and has multiplicity $r-1$.  $\checkmark^*$ means that the criteria only imply that $2\rho\leq\lambda_\pg$.} \label{tableIB2}
\end{table}

\begin{table}
{\small 
$$
\begin{array}{c|c|c|c|c|c|c|c|c|c}
\text{No.} & \ggo/\kg  & r & \rho & \lambda_\pg & \lambda_\pg^{\max}& G\text{-stab.\ type}  & \text{C1} & \text{C2}& \text{Ref.}
\\[2mm] \hline \hline \rule{0pt}{14pt}
15 & \tfrac{\eg_8}{8\cdot\sug(2)} & 14 & \tfrac{3}{10}^* &  \underset{\text{mult=}7}{\tfrac{4}{5}^*} 
 &  \underset{\text{mult=}6}{\tfrac{14}{15}^*}
& G\text{-stab., loc.max.} &\checkmark & \checkmark&\text{Cor.}\ref{sc2-cor}
\\[2mm]  \hline \rule{0pt}{14pt}
16 & \tfrac{\eg_8}{\sog(5)\oplus\sog(5)}  & 6\; \text{(nmf)} &\tfrac{7}{24}^* &&& G\text{-stab., loc.max.} & \checkmark & \checkmark&\text{Cor.}\ref{sc2-cor}
\\[2mm]  \hline \rule{0pt}{14pt}
17 & \tfrac{\eg_8}{\sug(3)\oplus\sug(3)} & 5\; \text{(nmf)} & \tfrac{17}{60}^*&&& G\text{-stab., loc.max.} &\checkmark & \checkmark&\text{Cor.}\ref{sc2-cor}
\\[2mm]  \hline \rule{0pt}{14pt}
18a & \tfrac{\eg_6}{\tg} & 36 & \tfrac{7}{24}  & \underset{\text{mult=}20}{\tfrac{3}{4}^*} 
 &  \underset{\text{mult=}15}{1^*} 
& G\text{-stab., loc.max.}&\checkmark &\checkmark&\text{Cor.}\ref{sc2-cor}
\\[2mm]  \hline \rule{0pt}{14pt}
18b & \tfrac{\eg_7}{\tg} & 63 & \tfrac{5}{18} &  \underset{\text{mult=}27}{\tfrac{7}{9}^*}
&  \underset{\text{mult=}35}{1^*}
& G\text{-stab., loc.max.}  &\checkmark &\checkmark&\text{Cor.}\ref{sc2-cor}
\\[2mm]  \hline \rule{0pt}{14pt}
18c & \tfrac{\eg_8}{\tg} & 120 & \tfrac{4}{15}  &  \underset{\text{mult=}35}{\tfrac{4}{5}^*}
& \underset{\text{mult=}84}{1^*}
&G\text{-stab., loc.max.}  &\checkmark &\checkmark&\text{Cor.}\ref{sc2-cor}
\\[2mm] \hline\hline
\end{array}
$$}
\caption{\cite[Table IB, pp.580]{WngZll2}.} \label{tableIB3}
\end{table}

\subsection{$G$-stability}\label{main-sec}
A complete classification of standard metrics which are Einstein was obtained by Wang and Ziller in \cite{WngZll2} in the case when $G$ is simple.  Besides isotropy irreducible spaces and assuming that $G$ and $K$ are both connected, the list consists of $12$ infinite families (two of them are actually conceptual constructions) and $22$ isolated examples, which have been collected in  Tables \ref{tableIA}, \ref{tableIB1}, \ref{tableIB2} and \ref{tableIB3} by using the same numbering as in \cite[Tables IA and IB]{WngZll2}.  

The following is the main result of this paper, providing a complete picture of the $G$-stability of these standard Einstein metrics.   

\begin{theorem}\label{main}
The $G$-stability types of all standard Einstein metrics on non-isotropy irreducible homogeneous spaces $M=G/K$ with $G$ simple are given as in Tables \ref{tableIA}, \ref{tableIB1}, \ref{tableIB2} and \ref{tableIB3}, except for the space $\SO(4n^2)/\Spe(n)\times\Spe(n)$.  
\end{theorem}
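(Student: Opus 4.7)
The proof is necessarily a case-by-case verification following the enumeration of Wang--Ziller's classification. My plan is to dispatch each row of Tables \ref{tableIA}--\ref{tableIB3} by the cheapest available method, deferring detailed computations to the sections cited in the rightmost column. The central quantity is the operator $\lic_\pg$ acting on $\sym(\pg)^K$: I must compare its spectrum on $\tca\tca_g^G$ with $2\rho$, where $\rho$ is obtained from \eqref{rhoci} or equivalently from \eqref{rhoijk}--\eqref{sumijk}. Whenever $M=G/K$ is multiplicity-free, the matrix entries \eqref{Lpijk} give $\lic_\pg$ exactly once the structural constants $[ijk]$ are known; in the non-multiplicity-free cases the same matrix still controls stability through the bracketing $\lambda_\pg\le\mathrm{Spec}\le\lambda_\pg^{\max}$ remarked after \eqref{Lpijk}.

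First I would handle the bulk of the exceptional cases in Tables \ref{tableIB1}--\ref{tableIB3} via the purely Lie-theoretic stability criteria C1 and C2 from \S\ref{Gstab-sec}. These criteria involve only the extremal Casimir eigenvalues $\lambda_\tau, \lambda_\tau^{\max}$ on $\sym(\ggo)$ and the ratio $\dim\kg/\dim\ggo$, and by inspection they settle every case marked $\checkmark$ in the tables (items 1, 2 of Table \ref{tableIB1}; items 8--10, 12, 13, 15--18c of Tables \ref{tableIB2}--\ref{tableIB3}). The remaining exceptional entries require working directly with structural constants: items 3, 5, 6 of Table \ref{tableIB1} and item 14 of Table \ref{tableIB2} are already carried out in \cite{stab-dos}, and items 4, 11, and the low-rank refinements of 10 are treated in the short sections \S\ref{B.4}, \S\ref{B.11}, \S\ref{B.10}.

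For Table \ref{tableIA}, the families of items 1a, 1b.1, 2a, 2b, 2c are covered by \cite{stab-tres} and previous work. The two-summand families 3a, 6, 7a, 7b, 8, 9 are dispatched in \S\S\ref{r2-sec} by combining the three formulas for $\rho$ (structural, Killing-form, Casimir) to solve the small system arising from \eqref{Lpijk} explicitly; in several of these the Corollary \ref{sc1} on two-summand spaces does the work immediately. The full flag $\SO(2n)/T^n$ (item 1b.2) is handled in \S\ref{so2n-sec}, where a direct eigenvector analysis on the abelian-isotropy case yields a nontrivial kernel of $\lic_\pg-2\rho\,\id$, giving $G$-neutral stability, while \cite{locmax} rules out a local maximum.

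The two genuinely hard pieces remain. The conceptual construction $\SO(\nn)/K$ in items 4--5 of Table \ref{tableIA} demands the explicit computation of structural constants for the isotropy representation of an auxiliary symmetric space; the plan in \S\ref{som-sec} is to exhibit explicit eigenvectors of $\lic_\pg$ with eigenvalue $\tfrac{\nn}{2(\nn-2)}<2\rho$, forcing $G$-instability. Finally $E_7/\SU(2)^7$ (item 7 of Table \ref{tableIB2}) has $r=7$ summands and is approached in \S\ref{e7-sec}: the symmetry of the seven $\SU(2)$ factors forces the structural constants $[ijk]$ to be uniform across a small number of orbit types, so the $7\times 7$ matrix of $\lic_\pg$ can be diagonalized by hand and $\lambda_\pg$ read off. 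The main obstacle throughout is precisely this last case together with the deferred exception $\SO(4n^2)/\Spe(n)\times\Spe(n)$, whose combinatorial complexity is the reason it is excluded from the statement; all other cases reduce either to an application of the criteria of \S\ref{Gstab-sec} or to a bounded computation with \eqref{Lpijk}.
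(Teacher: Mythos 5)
Your proposal is correct and follows essentially the same route as the paper: a case-by-case dispatch of the Wang--Ziller tables using the Casimir-eigenvalue criteria of \S\ref{Gstab-sec} for most exceptional cases, the explicit two-summand computations of \S\ref{r2-sec}, the graph-spectral analysis for $\SO(2n)/T^n$ in \S\ref{so2n-sec}, the eigenvector construction with eigenvalue $\tfrac{\nn}{2(\nn-2)}$ for the conceptual cases in \S\ref{som-sec}, and the uniform structural constants for $E_7/\SU(2)^7$ in \S\ref{e7-sec}. The only quibbles are attributional (item 1b.1 is handled in \S\ref{so2n-sec} rather than by \cite{stab-tres}, and Corollary \ref{sc1} is a general criterion involving $\rho$ rather than one specific to two summands), neither of which affects the argument.
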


The proof of the theorem is worked out in Sections \ref{Gstab-sec} through \ref{e7-sec}.  

\begin{remark}
The following remarks are in order:  
\begin{enumerate}[(1)] 
\item The only cases for which the $G$-stability type had been known before are the flag manifolds listed in Table \ref{tableIA}, items 1a.1, 1a.2, 1a.3, 2a, 2b, 2c (see \cite[Table 2]{stab-tres}) and the generalized Wallach spaces given in Table \ref{tableIB1}, items 1, 3, 5, 6 and Table \ref{tableIB2}, item 14 (see \cite[Table 1]{stab-dos}).  

\item The number $r$ of irreducible isotropy summands is provided in the tables.  Recall that $\dim{\mca_1^G}\geq r-1$, where equality holds if and only if the space is multiplicity-free.  There are only six cases which are not multiplicity-free; they have been marked with `(nmf)'  on the right of the number $r$.  

\item Additional information on what kind of critical point of $\scalar|_{\mca_1^G}$ is the standard metric $g_{\kil}$ is also given, including the coindex (if positive and $<r-1$) and the nullity (if positive).  

\item The last column on the right of the tables provides a reference to the place in the paper (or to another paper) where the $G$-stability type was proved.  

\item Einstein constants (see Table \ref{tableIAA} for the spaces in Table \ref{tableIA}) appearing with an asterisk were computed in this paper; the authors were not able to find any of them in the literature.  

\item The eigenvalues $\lambda_\pg$ and $\lambda_\pg^{\max}$ of the Lichnerowicz Laplacian computed here were also marked with an asterisk and the corresponding multiplicity was added below the value in the cases when it is different from $r-1$ (see Table \ref{tableIAA} for the spaces in Table \ref{tableIA}).  Recall that $\tca\tca_g^G=\sym_0(\pg)^K$ for any standard metric $g$.      

\item All the eigenvalues of $\lic_\pg|_{\sym_0(\pg)^K}$ we found are nonzero, which implies that the standard metric is Ricci locally invertible (see \cite{PRP}) in all the cases where the whole spectrum of $\lic_\pg$ was computed.   

\item The meaning of the columns C1 and C2 is explained at the end of \S\ref{Gstab-sec}.  

\item We do not know whether each of the two $G$-semistable cases $E_6/\SO(3)^3$ (see Table \ref{tableIB1}, 2) and $E_8/\SO(9)$ (see Table \ref{tableIB2}, 9) is either $G$-stable or $G$-neutrally stable.   

\item In the case $E_8/\SO(5)$ (see Table \ref{tableIB2}, 8), $g_{\kil}$ is a local maximum as it is $G$-stable.  On the other hand, since $K$ is a maximal subgroup of $G$, we know that there exists a global maximum (see \cite[Theorem (2.2)]{WngZll}), but we were not able to figure out whether $g_{\kil}$ is the global maximum or not without more information on the structural constants.   
\end{enumerate}
\end{remark}

\section{Stability criteria for standard Einstein metrics}\label{Gstab-sec}

In this section, we give sufficient conditions for the $G$-stability types of $g_{\kil}$ on $G/K$ to hold involving only $\dim{G}$, $\dim{K}$ and the Casimir eigenvalues $\lambda_\tau$ and $\lambda_\tau^{\max}$ attached to $\ggo$ (see Table \ref{table1}).  Several cases, specially when $\ggo$ is exceptional, follow from these purely Lie theoretical criteria.  

Let $\cas_\tau$ denote the Casimir operator acting on the representation $\sym(\ggo)$ of $\ggo$ given by $\tau(X)A:=[\ad{X},A]$ with respect to $-\kil_\ggo$, i.e., 
$$
\cas_\tau=-\sum \tau(Z_j)^2-\sum\tau(X_i)^2.  
$$
The eigenvalues $\lambda_\tau<\lambda_\tau^{\midop}<\lambda_\tau^{\max}$ of $\cas_\tau$ restricted to $\sym_0(\ggo)$ are given in Table \ref{table1} for each compact simple Lie algebra $\ggo$ (it is only one eigenvalue for $\sug(2)$ and only two, say $\lambda_\tau<\lambda_\tau^{\max}$, for any exceptional $\ggo$).  The following result paves the way to estimate the eigenvalues $\lambda_\pg$ and $\lambda_\pg^{\max}$ of the Lichnerowicz Laplacian $\lic_\pg=\lic_\pg(g_{\kil})$ in terms of $\lambda_\tau$ and $\lambda_\tau^{\max}$.  

\begin{lemma}\label{castau}
$\cas_\tau$ leaves the subspace $\sym(\ggo)^K$ invariant and 
\begin{align*}
\cas_\tau\left[\begin{matrix} B&0\\ 0&0 \end{matrix}\right] &= \left[\begin{matrix} 
-2\kil_\chi B & 0\\ 
0 & -2\sum a(X_i)^tBa(X_i) 
\end{matrix}\right], \qquad\forall B\in\sym(\kg)^K,
\\
\cas_\tau\left[\begin{matrix} 0&0\\ 0&A \end{matrix}\right] &= \left[\begin{matrix} 
-2\sum a(X_i)Aa(X_i)^t & 2\sum a(X_i)A\ad_\pg(X_i) \\ 
-2\sum \ad_\pg(X_i)Aa(X_i)^t & \cas_\chi A +A\cas_\chi + 2\lic_\pg A\end{matrix}\right], \qquad \forall A\in\sym(\pg)^K,
\end{align*}
where $\lic_\pg$ is the Lichnerowicz Laplacian of $g_{\kil}$.  
\end{lemma}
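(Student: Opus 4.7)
The strategy is a direct block calculation based on the decompositions $\ad Z=\operatorname{diag}(\ad_\kg Z,\chi(Z))$ for $Z\in\kg$ and $\ad X$ as in \eqref{adXnr} for $X\in\pg$. Invariance of $\sym(\ggo)^K$ under $\cas_\tau$ is automatic: $\cas_\tau$ is the Casimir of the representation $\tau$ of $\ggo$ and therefore commutes with $\tau(\kg)$, hence with the $K$-action.

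Split $\cas_\tau=-\sum_j\tau(Z_j)^2-\sum_i\tau(X_i)^2$. For $A$ of either block-diagonal form $\begin{bmatrix}B&0\\0&0\end{bmatrix}$ with $B\in\sym(\kg)^K$ or $\begin{bmatrix}0&0\\0&A\end{bmatrix}$ with $A\in\sym(\pg)^K$, the $K$-invariance makes the $\sum_j\tau(Z_j)^2$ term vanish, since $\ad Z_j$ is block-diagonal and $B$ commutes with $\ad_\kg Z_j$ (respectively $A$ commutes with $\chi(Z_j)$). Thus it suffices to compute $[\ad X,[\ad X,A]]$ as a $2\times 2$ block matrix for each $X=X_i\in\pg$ and sum.

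For the first formula, expanding $[\ad X,[\ad X,\begin{bmatrix}B&0\\0&0\end{bmatrix}]]$ and summing over $\{X_i\}$ yields $-(\kil_\chi B+B\kil_\chi)$ in the $(\kg,\kg)$-block (by Lemma \ref{formulas}(i)), the term $-2\sum a(X_i)^tBa(X_i)$ in the $(\pg,\pg)$-block directly, and zero in the off-diagonal blocks (by Lemma \ref{formulas}(iv) and its transpose, obtained via skew-symmetry of $\ad_\pg X_i$). To collapse $-(\kil_\chi B+B\kil_\chi)$ to $-2\kil_\chi B$ one uses that any $B\in\sym(\kg)^K$ is a scalar on each simple ideal of $\kg$ by Schur (and arbitrary on the centre, where $K$ acts trivially), while $\kil_\chi$ is block-scalar on the same pieces with eigenvalue $c_i-1$ on $\kg_i$ (cf.\ \eqref{Bchi}); hence $B$ and $\kil_\chi$ commute.

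For the second formula, the analogous expansion of $[\ad X,[\ad X,\begin{bmatrix}0&0\\0&A\end{bmatrix}]]$ produces four blocks after summation. The $(\kg,\kg)$-block is directly $-2\sum a(X_i)Aa(X_i)^t$. The off-diagonal blocks are $2\sum a(X_i)A\ad_\pg(X_i)$ and its transpose $-2\sum\ad_\pg(X_i)Aa(X_i)^t$, after the competing terms $\sum a(X_i)\ad_\pg(X_i)\cdot A$ and $A\cdot\sum\ad_\pg(X_i)a(X_i)^t$ cancel by Lemma \ref{formulas}(iv). The $(\pg,\pg)$-block equals $\sum a(X_i)^ta(X_i)A+A\sum a(X_i)^ta(X_i)-\sum[\ad_\pg X_i,[\ad_\pg X_i,A]]$, which is $\cas_\chi A+A\cas_\chi+2\lic_\pg A$ by Lemma \ref{formulas}(ii) and the definition \eqref{Lpnr-intro} of $\lic_\pg$. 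The main obstacle is purely bookkeeping---signs, order of multiplication, and placement of transposes---with every non-$\lic_\pg$ contribution ultimately identified via the identities of Lemma \ref{formulas}.
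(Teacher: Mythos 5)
Your proof is correct and follows essentially the same route as the paper: kill the $\sum_j\tau(Z_j)^2$ part by $K$-invariance, expand $\tau(X_i)^2$ blockwise using \eqref{adXnr}, and identify the resulting sums via Lemma \ref{formulas} and \eqref{Lpnr-intro}. You are in fact slightly more careful than the paper at the one step it leaves implicit, namely collapsing $-(\kil_\chi B+B\kil_\chi)$ to $-2\kil_\chi B$; note that the cleanest justification is that $\kil_\chi=\cas_{\ad_\kg}-I_\kg$ (Lemma \ref{formulas}(iii)) commutes with every $\kg$-intertwiner, which covers the centre without invoking \eqref{Bchi}.
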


\begin{remark}
Most of the times, the $K$-irreducible subspaces of $\kg$ are pairwise non-equiv\-a\-lent as $K$-representations to those of $\pg$, which implies that 
$$
\sym(\ggo)^K = \left\{\left[\begin{matrix} B&0\\ 0&A \end{matrix}\right]:B\in\sym(\kg)^K, \; A\in\sym(\pg)^K\right\}.
$$
A simple counterexample of a standard Einstein $G/K$ with $G$ simple to the above property is $\SO(2kl)/K^l$, where $K$ is any simple Lie group and the inclusion is defined as $l$ copies of $\Delta(K)\subset K\times K$ (see Table \ref{tableIA}, 5).  
\end{remark}

\begin{proof}
If $\tau$ also denotes the corresponding $G$-representation, i.e., $\tau(x)A=\Ad(x)A\Ad(x)^{-1}$ for $x\in G$, then every $\tau(x)$ commutes with $\cas_\tau$.  Indeed, $\cas_\tau$ is induced by the Casimir element, which belongs to the center of the enveloping algebra.  Since $\sym(\ggo)^K$ is precisely the intersection of all the kernels of $\tau(z)$, $z\in K$, one obtains that $\cas_\tau\sym(\ggo)^K\subset\sym(\ggo)^K$.  

Concerning the formulas, we observe that $\tau(Z_j)\left[\begin{smallmatrix} B&0\\ 0&A \end{smallmatrix}\right]=0$ 
for all $j$ by the $K$-invariance of $B$ and $A$, and from \eqref{adXnr} one easily obtains that  
$$
\tau(X_i)\left[\begin{matrix} B&0\\ 0&A \end{matrix}\right] = \left[\begin{matrix} 0&-Ba(X_i)+a(X_i)A\\ -a(X_i)^tB+Aa(X_i)^t& [\ad_\pg(X_i),A] \end{matrix}\right].
$$
This implies that
$$
\tau(X_i)^2\left[\begin{matrix} B&0\\ 0&0 \end{matrix}\right] = \left[\begin{matrix} -a(X_i)a(X_i)^tB-Ba(X_i)a(X_i)^t & Ba(X_i)\ad_\pg{X_i}\\  \ast & -2 a(X_i)^tBa(X_i) \end{matrix}\right],
$$
and $\tau(X_i)^2\left[\begin{matrix} 0&0\\ 0&A \end{matrix}\right]$ equals
$$
\left[\begin{matrix} 2a(X_i)Aa(X_i)^t & 
a(X_i)[\ad_\pg(X_i),A]-a(X_i)A\ad_\pg(X_i)\\  
\ast & -a(X_i)^ta(X_i)A - Aa(X_i)^ta(X_i) +[\ad_\pg(X_i), [\ad_\pg(X_i),A]] \end{matrix}\right].
$$
Finally, using Lemma \ref{formulas}, (i), (ii) and (iv), the formulas stated in the lemma follow.  
\end{proof}

\begin{table}
{
$$
\begin{array}{c|c|c|c|c}
\ggo & \dim{\ggo} & \lambda_\tau & \lambda_\tau^{\midop} & \lambda_\tau^{\max}   
\\[2mm] \hline \hline \rule{0pt}{14pt}
\sug(2) & 3 & 3 & - & - 
\\[2mm]  \hline \rule{0pt}{14pt}
\sug(n), n\geq 3 & n^2-1 & 1 & \tfrac{2(n-1)}{n} & \tfrac{2(n+1)}{n} 
\\ [2mm] \hline\rule{0pt}{14pt}
\sog(7) & 21 & \tfrac{6}{5}  & \tfrac{7}{5} & \tfrac{12}{5} 
\\[2mm] \hline\rule{0pt}{14pt}
\sog(n), n\geq 8 & \tfrac{n(n-1)}{2} & \tfrac{n}{n-2}  & \tfrac{2(n-4)}{n-2} & \tfrac{2(n-1)}{n-2}   
\\[2mm] \hline\rule{0pt}{14pt}
%
\spg(n), n\geq 2 & n(2n+1) & \tfrac{n}{n+1} & \tfrac{2n+1}{n+1} & \tfrac{2n+4}{n+1} 
\\[2mm] \hline\rule{0pt}{14pt}
%
\eg_6 & 78 & \tfrac{3}{2} & - & \tfrac{13}{6}  
\\[2mm] \hline\rule{0pt}{14pt}
 \eg_7 & 133 & \tfrac{14}{9} & - & \tfrac{19}{9}  
\\[2mm] \hline\rule{0pt}{14pt}
\eg_8 & 248 & \tfrac{8}{5} & - & \tfrac{31}{15} 
\\[2mm] \hline\rule{0pt}{14pt}
\fg_4 & 52 & \tfrac{13}{9} & - & \tfrac{20}{9} 
\\[2mm] \hline\rule{0pt}{14pt}
\ggo_2 & 14 & \tfrac{7}{6} & - & \tfrac{5}{2} 
\\[2mm] \hline\hline
\end{array}
$$}
\caption{Eigenvalues of the Casimir operator $\cas_\tau$ acting on $\sym_0(\ggo)$ for a compact simple Lie algebra $\ggo$.} \label{table1}
\end{table}

The following are stability criteria in which the Einstein constant is not involved. 

\begin{theorem}\label{sc2}
Suppose that $g_{\kil}$ is Einstein.  
\begin{enumerate}[{\rm (i)}]
\item If $\dim{\kg} < \tfrac{\dim{\ggo}(\lambda_\tau-1)}{2\lambda_\tau^{\max}}$, then $2\rho<\lambda_\pg$ and $g_{\kil}$ is $G$-stable.  

\item If $\dim{\kg} = \tfrac{\dim{\ggo}(\lambda_\tau-1)}{2\lambda_\tau^{\max}}$, then $2\rho\leq\lambda_\pg$ and $g_{\kil}$ is $G$-semistable.  

\item If $\dim{\kg} > \tfrac{\dim{\ggo}(\lambda_\tau^{\max}-1)}{2\lambda_\tau}$, then $\lambda_\pg^{\max} < 2\rho$ and $g_{\kil}$ is $G$-unstable and a local minimum of $\scalar|_{\mca_1^G}$. 

\item If $\dim{\kg} = \tfrac{\dim{\ggo}(\lambda_\tau^{\max}-1)}{2\lambda_\tau}$, then $\lambda_\pg^{\max}\leq 2\rho$.  In particular, $g_{\kil}$ is $G$-unstable if $\lambda_\pg<\lambda_\pg^{\max}$, and it is either $G$-unstable and a local minimum or $G$-neutrally stable if $\lambda_\pg=\lambda_\pg^{\max}$. 
\end{enumerate}
\end{theorem}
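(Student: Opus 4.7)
The plan is to use Lemma \ref{castau} as a machine that converts spectral information on $\cas_\tau$ acting on $\sym(\ggo)$ into spectral information on $\lic_\pg$ acting on $\sym(\pg)^K$, combined with the scalar normalization needed to pull out $\rho$. First I would fix $A\in\sym(\pg)^K$, form the block extension $\hat{A}:=\left[\begin{smallmatrix}0&0\\ 0&A\end{smallmatrix}\right]\in\sym(\ggo)^K$, and compute $\la\cas_\tau\hat A,\hat A\ra=\tr((\cas_\tau\hat A)\hat A)$ using Lemma \ref{castau}. Only the bottom-right block $\cas_\chi A+A\cas_\chi+2\lic_\pg A$ contributes to the trace against $\hat A$, and the Einstein condition gives $\cas_\chi=(2\rho-\tfrac12)I_\pg$, so
\begin{equation*}
\la\cas_\tau\hat A,\hat A\ra=(4\rho-1)\|A\|^2+2\la\lic_\pg A,A\ra,\qquad \|\hat A\|^2=\|A\|^2.
\end{equation*}

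Next I would specialize to $A\in\sym_0(\pg)^K=\tca\tca_{g_{\kil}}^G$. Then $\hat A$ is traceless, so $\hat A\in\sym_0(\ggo)^K\subset\sym_0(\ggo)$, on which $\cas_\tau$ has spectrum squeezed between $\lambda_\tau$ and $\lambda_\tau^{\max}$. Combining with the identity above yields
\begin{equation*}
\tfrac{\lambda_\tau+1}{2}-2\rho\ \le\ \lambda_\pg\ \le\ \lambda_\pg^{\max}\ \le\ \tfrac{\lambda_\tau^{\max}+1}{2}-2\rho.
\end{equation*}
To get $\rho$ itself under control, I would plug in $A=I_\pg$; since $\lic_\pg I_\pg=0$ (the identity lies in the kernel by \eqref{Lpnr-intro}), the same identity reduces to $\la\cas_\tau\hat{I}_\pg,\hat{I}_\pg\ra=(4\rho-1)\dim\pg$. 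Orthogonally decomposing $\hat{I}_\pg=\tfrac{\dim\pg}{\dim\ggo}I_\ggo+A_0$ with $A_0\in\sym_0(\ggo)^K$ and noting $\cas_\tau I_\ggo=0$, I obtain $\la\cas_\tau\hat{I}_\pg,\hat{I}_\pg\ra=\la\cas_\tau A_0,A_0\ra$ with $\|A_0\|^2=\dim\pg\cdot\dim\kg/\dim\ggo$, so applying the spectral bounds for $\cas_\tau$ on $\sym_0(\ggo)$ gives
\begin{equation*}
\lambda_\tau\,\tfrac{\dim\kg}{\dim\ggo}\ \le\ 4\rho-1\ \le\ \lambda_\tau^{\max}\,\tfrac{\dim\kg}{\dim\ggo}.
\end{equation*}

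Finally I would splice the two chains. For (i), the hypothesis $\dim\kg<\tfrac{\dim\ggo(\lambda_\tau-1)}{2\lambda_\tau^{\max}}$ forces $4\rho\le 1+\lambda_\tau^{\max}\dim\kg/\dim\ggo<\tfrac{\lambda_\tau+1}{2}$, i.e.\ $8\rho<\lambda_\tau+1$, which rearranges exactly to $2\rho<\tfrac{\lambda_\tau+1}{2}-2\rho\le\lambda_\pg$, proving $G$-stability. Part (ii) is identical with non-strict inequalities. For (iii), the hypothesis $\dim\kg>\tfrac{\dim\ggo(\lambda_\tau^{\max}-1)}{2\lambda_\tau}$ forces $8\rho\ge 2+2\lambda_\tau\dim\kg/\dim\ggo>\lambda_\tau^{\max}+1$, i.e.\ $\lambda_\pg^{\max}\le\tfrac{\lambda_\tau^{\max}+1}{2}-2\rho<2\rho$, giving $G$-instability with $g_{\kil}$ a local minimum; (iv) again follows by allowing equality. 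There is no real obstacle here: once the translation $\cas_\tau\leftrightarrow\lic_\pg$ of paragraph one is identified, the argument is a two-point application (on traceless $A$ and on $I_\pg$) followed by elementary algebra. The only subtle step is recognizing that the very same identity must be applied to $I_\pg$ to extract a Killing/Casimir constraint on the Einstein constant $\rho$, which is what makes the final criteria purely Lie-theoretical.
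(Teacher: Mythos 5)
Your proof is correct and follows essentially the same route as the paper: Lemma \ref{castau} converts $\la\cas_\tau\overline{A},\overline{A}\ra$ into $\la\lic_\pg A,A\ra$, the spectral bounds $\lambda_\tau\le\cas_\tau|_{\sym_0(\ggo)}\le\lambda_\tau^{\max}$ are applied once on $\sym_0(\pg)^K$ and once to pin down $\rho$, and the two chains are spliced. Your second application (via $A=I_\pg$ and the orthogonal decomposition $\hat I_\pg=\tfrac{\dim\pg}{\dim\ggo}I_\ggo+A_0$) is only cosmetically different from the paper's, since your $A_0$ is a scalar multiple of the element $I_0$ the paper uses, and both yield the same ratio $\la\cas_\tau\cdot,\cdot\ra/\|\cdot\|^2=\tfrac{\dim\ggo}{\dim\kg}(4\rho-1)$.
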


\begin{proof}
Recall that $\la A,B\ra=\tr{AB}$ for all $A,B\in\sym(\pg)^K$.  If $\ricci(g_{\kil}) = \rho g_{\kil}$, or equivalently, $\cas_\chi=(2\rho-\unm)I_\pg$, then it follows from Lemma \ref{castau} that
$$
\la\lic_\pg A,A\ra = \unm\la\cas_{\tau}\overline{A},\overline{A}\ra - (2\rho - \unm)|A|^2,  \qquad\forall A\in\sym(\pg)^K, \quad \overline{A}:=\left[\begin{matrix} 0&0\\ 0&A \end{matrix}\right]. 
$$
Since
$$
\lambda_\tau\leq \la\cas_{\tau}\overline{A},\overline{A}\ra\leq \lambda_\tau^{\max}, \qquad\forall A\in\sym_0(\pg)^K, \quad |A|=1,
$$ 
we obtain the following estimates: 
\begin{equation}\label{bounds}
\unm(\lambda_\tau-(8\rho-1))+2\rho \leq \lambda_\pg \leq \lambda_\pg^{\max} \leq  \unm(\lambda_\tau^{\max}-(8\rho-1))+2\rho.   
\end{equation} 
This already proves the criteria given in Corollary \ref{sc1} below.  To conclude the proof, we need to estimate $\lambda_\pg$ and $\lambda_{\pg}^{\max}$ in terms of $\rho$.  

We consider the orthogonal decomposition 
\begin{equation}\label{symg}
\RR I_0 \oplus \overline{\sym_0(\kg)^K} \oplus \overline{\sym_0(\pg)^K} \subset \sym_0(\ggo)^K,
\end{equation}
where
$$
I_0:=\left[\begin{matrix} I_\kg &0\\ 0& -\tfrac{\dim{\kg}}{d}I_\pg \end{matrix}\right], \quad \overline{\sym_0(\kg)^K}:= \left[\begin{matrix} \sym_0(\kg)^K&0\\ 0&0 \end{matrix}\right], \quad \overline{\sym_0(\pg)^K} := \left[\begin{matrix} 0&0\\ 0&\sym_0(\pg)^K \end{matrix}\right].  
$$
Again by Lemma \ref{castau},
\begin{equation}\label{CI02}
\cas_\tau I_0 = \left[\begin{matrix} -\tfrac{2\dim{\ggo}}{d} \kil_\chi &0\\ 0& -\tfrac{2\dim{\ggo}}{d}(2\rho-\unm)I_\pg \end{matrix}\right],
\end{equation}
and so by \eqref{ci}, 
\begin{align*}
\la\cas_\tau I_0,I_0\ra &= -\tfrac{2\dim{\ggo}}{d} \tr{\kil_\chi} + \tfrac{2\dim{\ggo}\dim{\kg}}{d}(2\rho-\unm) 
= 2\dim{\ggo}(2\rho-\unm) + \tfrac{2\dim{\ggo}\dim{\kg}}{d}(2\rho-\unm) \\ 
&= 2\dim{\ggo}(1+\tfrac{\dim{\kg}}{d})(2\rho-\unm) = \tfrac{2(\dim{\ggo})^2}{d}(2\rho-\unm).
\end{align*}
Since
$$
|I_0|^2 = \dim{\kg} + \tfrac{(\dim{\kg})^2}{d} = \tfrac{\dim{\kg}\dim{\ggo}}{d},
$$
we obtain that
\begin{equation}\label{CI0} 
\tfrac{\la\cas_\tau I_0,I_0\ra}{|I_0|^2} = \tfrac{\dim{\ggo}}{\dim{\kg}}(4\rho-1).  
\end{equation}
Thus
$$ 
\lambda_\tau\leq \tfrac{\dim{\ggo}}{\dim{\kg}}(4\rho-1) \leq \lambda_\tau^{\max}, 
$$
or equivalently,
$$ 
\tfrac{2\dim{\kg}}{\dim{\ggo}}\lambda_\tau + 1 \leq 8\rho-1 \leq \tfrac{2\dim{\kg}}{\dim{\ggo}}\lambda_\tau^{\max} +1.  
$$
The condition in part (i) therefore implies that $8\rho-1<\lambda_\tau$, and so by \eqref{bounds} we conclude that $2\rho < \lambda_\pg$, i.e., $g_{\kil}$ is $G$-stable.  The other parts follow in much the same way.  
\end{proof}

\begin{example}
As a first example of the usefulness of the above criteria we consider the full flag manifold $E_8/T^8$.  It follows from Table \ref{table1} that 
$$
\dim{\kg}=8 < 36 = \tfrac{\dim{\ggo}(\lambda_\tau-1)}{2\lambda_\tau^{\max}},
$$
and so according to Theorem \ref{sc2}, (i), the standard metric $g_{\kil}$ on $E_8/T^8$ is $G$-stable and in particular a local maximum of $\scalar|_{\mca_1^G}$.  This is remarkable, considering that $\dim{\mca_1^G}=119$.  On the other hand, consider the space $E_8/\SU(2)^8$.  The Einstein constant $\rho$ is not available in the literature and its computation is far from straightforward (see \S\ref{B.15}), but since
$\dim{\kg}=24 < 36$, we also obtain from Theorem \ref{sc2}, (i) that $g_{\kil}$ is $G$-stable and a local maximum.  Note that $\dim{\mca_1^G}=13$ in this case.  
\end{example}

The numbers on the right in each of the conditions in Theorem \ref{sc2} only depend on $\ggo$ and are given in Table \ref{table-sc} for some Lie algebras (see Table \ref{table1} and Figure \ref{table-lambdas}).  A simple comparison between these numbers and $\dim{\kg}$ gives the following corollary of Theorem \ref{sc2}, which contains the totality of the cases where the criteria can be successfully applied.  

\begin{corollary}\label{sc2-cor}
The standard metric is 
\begin{enumerate}[{\small $\bullet$}] 
\item $G$-stable for the spaces numbered as 8, 12, 13, 15, 16, 17, 18a, 18b and 18c in Tables \ref{tableIB2} and \ref{tableIB3}, 

\item $G$-unstable and a local minimum for case 1 in Table \ref{tableIB1}, 

\item $G$-semistable for case 2 in Table \ref{tableIB1} and case 10 in Table \ref{tableIB2}.   
\end{enumerate}
\end{corollary}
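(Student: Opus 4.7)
My plan is to apply Theorem \ref{sc2} directly to each case listed, since the corollary is precisely the statement that the Lie-theoretic criteria there succeed on exactly these spaces. The proof therefore reduces to a table-lookup and four arithmetic checks per case. I would structure the verification uniformly as follows.

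First, I would make a small table for each of the spaces in the corollary, recording $\dim\ggo$, $\dim\kg$, together with $\lambda_\tau$ and $\lambda_\tau^{\max}$ for the relevant compact simple Lie algebra $\ggo$ read off from Table \ref{table1}. Using these inputs, I would compute the two thresholds
\[
t_-(\ggo)=\tfrac{\dim\ggo\,(\lambda_\tau-1)}{2\lambda_\tau^{\max}}, \qquad t_+(\ggo)=\tfrac{\dim\ggo\,(\lambda_\tau^{\max}-1)}{2\lambda_\tau},
\]
and compare each with $\dim\kg$. For the spaces flagged as $G$-stable (items 8, 12, 13, 15, 16, 17 in Table \ref{tableIB2} and 18a, 18b, 18c in Table \ref{tableIB3}), the required inequality is $\dim\kg<t_-(\ggo)$, and Theorem \ref{sc2}(i) then gives the conclusion. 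For instance, $t_-(\eg_8)=36$, and the relevant $\dim\kg$ values are $10, 32, 12, 24, 20, 16, 8$ respectively, each strictly less than $36$; similarly $t_-(\eg_6)=18>\dim\tg=6$ and $t_-(\eg_7)=28>\dim\tg=7$.

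For case 1 in Table \ref{tableIB1} (the space $\fg_4/\sping(8)$), I would verify $\dim\kg=28$ against $t_+(\fg_4)=\tfrac{52\cdot 11/9}{26/9}=22$; since $28>22$, Theorem \ref{sc2}(iii) yields $G$-instability and local minimality. The two borderline assertions ($G$-semistable for case 2 of Table \ref{tableIB1} and case 10 of Table \ref{tableIB2}) come from equalities: for $\eg_6/\sog(3)^{\oplus 3}$, $\dim\kg=9=t_-(\eg_6)$, so Theorem \ref{sc2}(ii) applies; for $\eg_8/\sping(9)$, $\dim\kg=36=t_-(\eg_8)$, again giving $G$-semistability via Theorem \ref{sc2}(ii). (Note that for the latter, the statement $2\rho\leq\lambda_\pg$ is what is marked with $\checkmark^*$ in Table \ref{tableIB2}.)

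There is no genuine mathematical obstacle here — Theorem \ref{sc2} has already done the work, and the corollary is essentially a compilation of the outputs. The only thing to be careful about is bookkeeping: one must verify that the remaining cases in the Wang--Ziller tables \emph{fail} at least one of the criteria, so that the corollary's list is complete. This is routine but tedious: one runs through the other items, computes $t_\pm(\ggo)$ from Table \ref{table1}, and checks that $\dim\kg$ lies strictly in the inconclusive range $(t_-(\ggo),t_+(\ggo))$ in every remaining case. Those cases are precisely the ones handled by the dedicated arguments in Sections \ref{r2-sec}--\ref{e7-sec}, justifying the ``totality'' claim in the corollary.
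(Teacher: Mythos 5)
Your approach is exactly the paper's: the corollary is proved by comparing $\dim\kg$ with the two thresholds $t_\pm(\ggo)$ of Theorem \ref{sc2} (tabulated in Table \ref{table-sc}), and your case-by-case bookkeeping of $\dim\kg$ is right. Two of your threshold values are, however, miscomputed: from Table \ref{table1} one gets $t_-(\eg_6)=\tfrac{78\cdot(1/2)}{2\cdot 13/6}=9$ and $t_-(\eg_7)=\tfrac{133\cdot(5/9)}{2\cdot 19/9}=\tfrac{35}{2}$, not $18$ and $28$; this does not affect the conclusions for 18a and 18b (still $6<9$ and $7<\tfrac{35}{2}$), but your value $t_-(\eg_6)=18$ is inconsistent with the equality $\dim\kg=9=t_-(\eg_6)$ that you (correctly) invoke for case 2 of Table \ref{tableIB1} — had $t_-(\eg_6)$ really been $18$, that space would come out $G$-stable rather than merely $G$-semistable. (Your $t_+(\fg_4)=22$ is the correct value computed from Table \ref{table1}, even though Table \ref{table-sc} lists $14$ there; since $28$ exceeds both, case 1 is unaffected.) Finally, the completeness check you describe at the end is not needed for the corollary as stated, which only asserts the listed conclusions.
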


The following stability criteria involving the Einstein constant follows from \eqref{bounds}, recall that $1\leq 8\rho-1\leq 3$ (see Figure \ref{table-lambdas}).   

\begin{corollary}\label{sc1}
Assume that $\ricci(g_{\kil}) = \rho g_{\kil}$.
\begin{enumerate}[{\rm (i)}] 
\item If $8\rho-1<\lambda_\tau$, then $2\rho<\lambda_\pg$ and $g_{\kil}$ is $G$-stable.  

\item If $8\rho-1=\lambda_\tau$, then $2\rho\leq\lambda_\pg$ and $g_{\kil}$ is $G$-semistable.  

\item If $\lambda_\tau^{\max}<8\rho-1$, then $\lambda_\pg^{\max}<2\rho$ and $g_{\kil}$ is $G$-unstable and a local minimum.  

\item If $\lambda_\tau^{\max}=8\rho-1$, then $\lambda_\pg^{\max}\leq 2\rho$ (cf.\ Theorem \ref{sc2}, (iv)).   
\end{enumerate}
\end{corollary}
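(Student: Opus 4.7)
The plan is to derive everything from the chain of inequalities
\begin{equation*}
\unm(\lambda_\tau-(8\rho-1))+2\rho \;\leq\; \lambda_\pg \;\leq\; \lambda_\pg^{\max} \;\leq\; \unm(\lambda_\tau^{\max}-(8\rho-1))+2\rho
\end{equation*}
established in \eqref{bounds} during the proof of Theorem \ref{sc2}. This chain was obtained by writing $\la\lic_\pg A,A\ra = \unm\la\cas_\tau\overline{A},\overline{A}\ra - (2\rho-\unm)|A|^2$ via Lemma \ref{castau} and then bounding $\la\cas_\tau\overline{A},\overline{A}\ra$ between $\lambda_\tau$ and $\lambda_\tau^{\max}$ for unit vectors $A\in\sym_0(\pg)^K$, using the hypothesis $\cas_\chi=(2\rho-\unm)I_\pg$ that encodes the Einstein condition. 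Thus all four statements of the corollary reduce to transparent substitutions into the above double inequality, followed by invoking the characterizations in Definition \ref{stab-def-intro}.

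For part (i), I would observe that $8\rho-1<\lambda_\tau$ makes the leftmost quantity strictly larger than $2\rho$, so $2\rho<\lambda_\pg$ and $g_{\kil}$ is $G$-stable by definition. Part (ii) is the equality version: $8\rho-1=\lambda_\tau$ forces $\lambda_\pg\geq 2\rho$, i.e.\ $G$-semistability. For part (iii), when $\lambda_\tau^{\max}<8\rho-1$ the rightmost quantity is strictly less than $2\rho$, giving $\lambda_\pg^{\max}<2\rho$; then since the Hessian is $\scalar''_g=\unm(2\rho\id-\lic_\pg)$ and $\tca\tca_g^G=\sym_0(\pg)^K$ for the naturally reductive metric $g_{\kil}$, this means $\scalar''_g|_{\tca\tca_g^G}$ is positive definite, so $g_{\kil}$ is simultaneously $G$-unstable and a local minimum of $\scalar|_{\mca_1^G}$. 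Part (iv) is again the equality version, yielding $\lambda_\pg^{\max}\leq 2\rho$.

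For context and as a sanity check, I would also note the remark in the text that $1\leq 8\rho-1\leq 3$: the lower bound comes from $\rho\geq\unc$ (non-abelian isotropy case, equality iff $\kg=0$) and the upper bound from the identity \eqref{CI0}, namely $\tfrac{\dim\ggo}{\dim\kg}(4\rho-1)\leq\lambda_\tau^{\max}$, combined with the fact that $\lambda_\tau^{\max}\leq 3$ for every compact simple $\ggo$ (see Table \ref{table1}). This confirms that the hypotheses in (i)--(iv) are not vacuous and places each case within the feasible range of Einstein constants.

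There is essentially no obstacle: the whole argument is a rereading of \eqref{bounds}. The only point requiring mild care is part (iii), where one must be explicit that $\lambda_\pg^{\max}<2\rho$ on the $G$-invariant TT-space forces the Hessian to be positive definite there (hence a local minimum), as opposed to merely producing some unstable direction. All other implications are immediate from Definition \ref{stab-def-intro}.
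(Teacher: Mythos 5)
Your proof is correct and is exactly the paper's argument: the paper establishes the chain \eqref{bounds} in the proof of Theorem \ref{sc2} and immediately notes that it proves Corollary \ref{sc1}, with the local-minimum conclusion in (iii) coming from $\lambda_\pg^{\max}<2\rho$ forcing $\scalar''_g=\unm(2\rho\id-\lic_\pg)$ to be positive definite on $\tca\tca_{g}^G=\sym_0(\pg)^K$, just as you say. One immaterial quibble with your sanity-check aside: the upper bound $8\rho-1\leq 3$ comes from $\rho\leq\unm$ (a consequence of $\Mm_{\lb_\pg}\leq 0$), not from combining \eqref{CI0} with $\lambda_\tau^{\max}\leq 3$, which only yields $8\rho-1<7$.
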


We note that each of the parts of the above corollary is a necessary condition for the corresponding part in Theorem \ref{sc2} to hold.  The following example shows that the estimates \eqref{bounds} are not in general sharp.  This warns us that all the above criteria may fail to find the $G$-stability type in many cases.  

\begin{table}
{
$$
\begin{array}{c|c|c|c|c|c}
& \sog(8) & \fg_4 & \eg_6 & \eg_7 & \eg_8 
\\[2mm] \hline \hline \rule{0pt}{14pt}
\tfrac{\dim{\ggo}(\lambda_\tau-1)}{2\lambda_\tau^{\max}} &\tfrac{26}{5} & 14 & 9 & \tfrac{35}{2} & 36 
\\[2mm] \hline \rule{0pt}{14pt}
\tfrac{\dim{\ggo}(\lambda_\tau^{\max}-1)}{2\lambda_\tau} & 22 & 14 & \tfrac{91}{3} & \tfrac{95}{2} & \tfrac{248}{3} 
%
\\[2mm] \hline\hline
\end{array}
$$}
\caption{Relevant quantities to apply Theorem \ref{sc2}.} \label{table-sc}
\end{table}

\begin{figure}
\begin{tikzpicture}[scale=1.2]

\draw[-,thick] (0,0) -- (10,0);
\draw[thick, fill] (0,0) node[below] {{\tiny $0$}};
\draw[-,thick] (0,0) node{{\tiny $|$}};
\draw[thick, fill] (3,0) node[below] {{\tiny $1$}};
\draw[-,thick] (3,0) node{{\tiny $|$}};
\draw[thick, fill] (6,0) node[below] {{\tiny $2$}};
\draw[-,thick] (6,0) node{{\tiny $|$}};
\draw[thick, fill] (9,0) node[below] {{\tiny $3$}};
\draw[-,thick] (9,0) node{{\tiny $|$}};

\draw[thick, fill, red] (2,0) circle (1pt) node[below] {{\tiny $\tfrac{2}{3}$}};
\draw[thick, fill, red] (3,0) circle (1pt) node[below] {{\tiny $1$}};
\draw[thick, fill, red] (4,0) circle (1pt) node[below] {{\tiny $\tfrac{4}{3}$}};

\draw[thick, fill, red] (4.8,0) circle (1pt); 
\draw (4.8,0) node[below] {{\tiny $\eg_8$}};
\draw[thick, fill, red] (4.5,0) circle (1pt); 
\draw (4.5,0) node[below] {{\tiny $\eg_6$}};
\draw[thick, fill, red] (4.66,0) circle (1pt); 
\draw (4.66,0) node[above] {{\tiny $\eg_7$}};
\draw[thick, fill, red] (4.33,0) circle (1pt); 
\draw (4.33,0) node[above] {{\tiny $\fg_4$}};

\draw[thick, fill, blue] (7,0) circle (1pt) node[below] {{\tiny $\tfrac{7}{3}$}};
\draw[thick, fill, blue] (8,0) circle (1pt) node[below] {{\tiny $\tfrac{8}{3}$}};

\draw[thick, fill, blue] (6.5,0) circle (1pt); 
\draw (6.5,0) node[above] {{\tiny $\eg_6$}};
\draw[thick, fill, blue] (6.33,0) circle (1pt); 
\draw (6.33,0) node[below] {{\tiny $\eg_7$}};
\draw[thick, fill, blue] (6.2,0) circle (1pt); 
\draw (6.2,0) node[above] {{\tiny $\eg_8$}};
\draw[thick, fill, blue] (6.66,0) circle (1pt); 
\draw (6.66,0) node[below] {{\tiny $\fg_4$}};

\draw[-,thick, olive] (3,0.5) -- (9,0.5); 
\draw[-,thick, olive] (3,0.5) node{{\tiny $($}};
\draw[-,thick, olive] (9,0.5) node{{\tiny $)$}};
\draw (6,0.5) node[above] {{\tiny $8\rho-1$}}; 
\draw (3,0) node[above] {{\tiny $\sug(n)$}};
\draw[-,thick, red] (2,-0.6) -- (3,-0.6); 
\draw[-,thick, red] (3,-0.6) node{{\tiny $)$}};
\draw[-,thick, red] (2,-0.6) node{{\tiny $[$}};
\draw (2.5,-0.6) node[below] {{\tiny $\spg(n)$}};
\draw[-,thick, red] (3,-1) -- (4,-1); 
\draw[-,thick, red] (3,-1) node{{\tiny $($}};
\draw[-,thick, red] (4,-1) node{{\tiny $]$}};
\draw (3.5,-1) node[below] {{\tiny $\sog(n)$}};
\draw[-,thick, blue] (6,-0.6) -- (7,-0.6); 
\draw[-,thick, blue] (6,-0.6) node{{\tiny $($}};
\draw[-,thick, blue] (7,-0.6) node{{\tiny $]$}};
\draw (6.5,-0.6) node[below] {{\tiny $\sog(n)$}};
\draw[-,thick, blue] (6,-1) -- (8,-1); 
\draw[-,thick, blue] (6,-1) node{{\tiny $($}};
\draw[-,thick, blue] (8,-1) node{{\tiny $]$}};
\draw (7,-1) node[below] {{\tiny $\sug(n)$, $\spg(n)$}};

\draw[red] (3.5,-0.6) node {{\tiny $\lambda_\tau$}};
\draw[blue] (7.5,-0.6) node {{\tiny $\lambda_\tau^{\max}$}};
\end{tikzpicture}
\caption{Stability criteria.}\label{table-lambdas}
\end{figure}
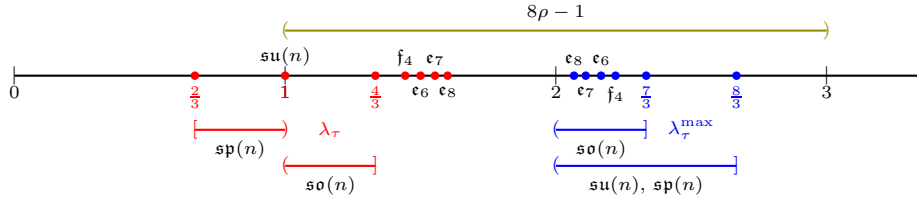

\begin{example}\label{sharp}
For the full flag manifold $M=\SU(n)/T^{n-1}$, $n\geq 3$, one has that $\rho=\tfrac{n+2}{4n}$, $\lambda_\pg=\unm$ and $\lambda_\pg^{\max} = \tfrac{n-1}{n}$ (see \cite[Section 6]{stab-tres}).  From Table \ref{table1} we know that $\lambda_\tau=1$ and $\lambda_\tau^{\max}=\tfrac{2(n+1)}{n}$, which implies that
$$
\unm\lambda_\tau-2\rho+\unm = \tfrac{n-2}{2n} < \unm = \lambda_\pg, \qquad 
\lambda_\pg^{\max} = \tfrac{n-1}{n} < 1 = \unm\lambda_\tau^{\max}-2\rho+\unm.
$$
It is also easy to see that $\lambda_\pg<\lambda_\tau^{\midop}$ if and only if $n\geq 9$ and that $\lambda_\pg=\lambda_\tau^{\midop}$ if and only if $n=8$.  Furthermore, since $\lambda_\tau^{\max}>\tfrac{n+4}{n}=8\rho -1$, the $G$-instability of $g_{\kil}$ (i.e., $\lambda_\pg<2\rho$) does not follow from Corollary \ref{sc1}, (iii) or (iv).   
\end{example}

The following are the only cases, beyond those covered by Corollary \ref{sc2-cor}, where we can apply Corollary \ref{sc1} after computing the Einstein constant $\rho$:

\begin{enumerate}[(i)] 
\item $\spg(3n-1)/\spg(n)\oplus\ug(2n-1)$ (see Table \ref{tableIA}, 7a): 
we have that $d_1= 2n(2n-1)$, 
$d_2 = 4n(2n-1)$ (see \cite[III.6]{DckKrr}), and it follows from \cite[Table 1, pp.37]{DtrZll} that $\kil_{\spg(n)}=\tfrac{n+1}{3n} \kil_{\spg(3n+2)}|_{\spg(n)}$.  On the other hand, since $\kil_{\sug(n)} = \tfrac{4n}{8(n+1)} \kil_{\spg(n)}$, we obtain that  
$$
\kil_{\sug(2n-1)} = \tfrac{4(2n-1)}{16n} \kil_{\spg(2n-1)}
= \tfrac{2n-1}{4n} \tfrac{2}{3} \kil_{\spg(3n-1)}
= \tfrac{2n-1}{6n} \kil_{\spg(3n-1)}.  
$$
Formula \eqref{rhoci} therefore gives that $\rho=\tfrac{5}{12}$.  This implies that $8\rho-1=\tfrac{7}{3}\geq\tfrac{2(3n+1)}{3n}=\lambda_\tau^{\max}$ if and only if $n\geq 2$, where equality holds if and only if $n=2$.  Thus $g_{\kil}$ is $G$-unstable and a local minimum for any $n\geq 3$ (see \S\ref{A.7a} for $n=1,2$).

\item $\sog(3n+2)/\sog(n)\oplus\ug(n+1)$ (see Table \ref{tableIA}, 7b): 
since $d_1= n(n+1)$ and $d_2 = 2n(n+1)$ (see \cite[I.18]{DckKrr}), one obtains from \cite[Table 1, pp.37]{DtrZll} that 
$$
\kil_{\sog(n)} = \tfrac{n-2}{3n} \kil_{\sog(3n+2)}|_{\sog(n)}, \qquad \kil_{\sug(n+1)}  = \tfrac{n+1}{3n}\kil_{\sog(3n+2)}|_{\sug(n+1)}.
$$  
Thus $\rho=\tfrac{5}{12}$ by formula \eqref{rhoci} and so $8\rho-1=\tfrac{7}{3}>\tfrac{2(3n+1)}{3n}=\lambda_\tau^{\max}$ for any $n\geq 3$, which implies that $g_{\kil}$ is $G$-unstable and a local minimum.

\item $\eg_6/\sping(8)\oplus\RR^2$ (see Table \ref{tableIB1}, 3): 
it is known that $\rho=\tfrac{5}{12}$, so $8\rho-1=\tfrac{7}{3}>\tfrac{13}{6}=\lambda_\tau^{\max}$ and hence $g_{\kil}$ is $G$-unstable and a local minimum (cf.\ \cite[Table 2, W7]{stab-dos}).
\end{enumerate}

The information on the applicability of the criteria given in Corollaries \ref{sc1} and \ref{sc2-cor} was respectively added on the columns C1 and C2 in Tables \ref{tableIB1}-\ref{tableIB3} and \ref{tableIAA}.  Note that C1 necessarily works if C2 does it.  

\begin{remark}
The authors recently became aware of the following stability criterion given in \cite[Theorem 2]{Nkn}: if $\rho>\frac{2}{5}$ then $g_{\kil}$ is a local minimum of $\scalar|_{\mca_1^G}$.  It is easy to check that this works for $6$ of the $9$ local minima exhibited in Tables \ref{tableIA}-\ref{tableIB3}, the three exceptions are the cases Table \ref{tableIA}, 6,8 and Table \ref{tableIB1}, 6.  We note that $\rho>\frac{2}{5}$ implies that $\frac{11}{5}<8\rho-1$, which in turn gives that $\lambda_\tau^{\max}<8\rho-1$ for $\eg_6$, $\eg_7$, $\eg_8$ and some low dimensional classical simple Lie algebras, establishing the local minimality of the standard metric by Corollary \ref{sc1}, (iii) in the case when $\ggo$ is one of these Lie algebras.  
\end{remark}

\section{Full flag manifolds $\SO(2n)/T^n$}\label{so2n-sec}

We study in this section the homogeneous spaces $\SO(2n)/T^n$, $n\geq 3$ (see (Table \ref{tableIA}, 1b.1 and 1b.2), where $T^n$ is the usual maximal torus of $\SO(2n)$, by following the lines of \cite[Section 6]{stab-tres}.  The standard block matrix reductive decomposition is given by 
$$
\ggo=\kg\oplus\pg, \qquad\pg=\pg_{12}\oplus\pg_{13}\oplus\dots\oplus\pg_{(n-1)n},
$$
where every block $\pg_{ij}=\pg_{ji}$ (note that always $i\ne j$) has dimension $4$ and is $\Ad(T^n)$-invariant.   It is easy to see that each of these subspaces in addition decomposes in the sum of two $\Ad(T^n)$-irreducible and pairwise inequivalent $2$-dimensional subspaces.  Thus $\SO(2n)/T^n$ is multiplicity-free and $\dim{\mca^G}=n(n-1)$.  

It is easy to check that $[\pg_{ij},\pg_{kl}]_\pg=0$ if $\{ i,j\}$ and $\{ k,l\}$ are either equal or disjoint, and that there exist decompositions $\pg_{ij} = \pg_{ij}^1\oplus\pg_{ij}^2$ such that 
\begin{align*}
[\pg_{ij}^1,\pg_{ik}^1]_\pg&\subset\pg_{jk}^1, &
[\pg_{ij}^1,\pg_{ik}^2]_\pg&\subset\pg_{jk}^2, &
[\pg_{ij}^2,\pg_{ik}^2]_\pg&\subset\pg_{jk}^1,
\end{align*} 
for all $j\ne k$.  Moreover, it is also easy to prove that all the above triples produces the same nonzero structural constant $\tfrac{1}{2(n-1)}$.   It follows from \eqref{Lpijk} that the Lichnerowicz Laplacian restricted to $\sca^2(M)^G$ of the standard metric $g_{\kil}$, which is Einstein with $\rho=\tfrac{n}{4(n-1)}$, is given by 
$$
[\lic_\pg]_{(ij)^1(ij)^1} = [\lic_\pg]_{(ij)^2(ij)^2} = \tfrac{n-2}{n-1}, \quad [\lic_\pg]_{(ij)^1(ik)^1} =[\lic_\pg]_{(ij)^1(ik)^2}= [\lic_\pg]_{(ij)^2(ik)^2} = -\tfrac{1}{4(n-1)}, 
$$
for all $\#\{ i,j,k\}=3$, and zero otherwise.  This implies that 
\begin{equation}\label{Lp-so2n}
[\lic_\pg] = \tfrac{1}{4(n-1)}\Big(4(n-2)I-
\left[\begin{matrix} \Adj(X)&\Adj(X)\\ \Adj(X)&\Adj(X)\end{matrix}\right]\Big), 
\end{equation}  
where $X=J(n,2,1)$ is the Johnson graph with parameters $(n,2,1)$ (see \cite[Section 1.6]{GdsRyl}) and $\Adj(X)$ denotes its adjacency matrix.  

Since this graph is strongly regular with parameters $(\tfrac{n(n-1)}{2},2(n-2),n-2,4)$ for any $n\geq 4$ (see \cite[Section 10.1]{GdsRyl}), it follows from \cite[Section 10.2]{GdsRyl} that the spectrum of $\Adj(X)$ is given by 
$$
2(n-2), \quad n-4, \quad -2,\quad \mbox{with multiplicities}\quad  1, \quad n-1, \quad \tfrac{n(n-3)}{2},  
$$    
respectively, and thus the matrix $\left[\begin{smallmatrix} \Adj(X)&\Adj(X)\\ \Adj(X)&\Adj(X)\end{smallmatrix}\right]$ has eigenvalues   
$$
4(n-2), \quad 2(n-4), \quad -4, \quad 0, \quad \mbox{with multiplicities}\quad  1, \quad n-1, \quad \tfrac{n(n-3)}{2}, \quad \tfrac{n(n-1)}{2}.  
$$    
respectively.  It follows from \eqref{Lp-so2n} that
$$
\Spec(\lic_\pg)=\left\{ 0,  \tfrac{n}{2(n-1)}, 1, \tfrac{n-2}{n-1}\right\}, 
$$
with multiplicities $1$, $n-1$, $\tfrac{n(n-3)}{2}$ and $\tfrac{n(n-1)}{2}$, respectively.  Thus $\lambda_\pg = \tfrac{n}{2(n-1)}$, $\lambda_\pg^{\midop} = \tfrac{n-2}{n-1}$ and $\lambda_\pg^{\max} =1$ for any $n\geq 4$.  

For $n=3$, $X$ is the complete graph on $3$ vertices and so the spectrum of $\Adj(X)$ equals $\{ 2, -1\}$, with multiplicities $1$ and $2$, respectively.  Thus $\lambda_\pg=\unm$ and $\lambda_\pg^{\max}=\tfrac{3}{4}$ with multiplicities $3$ and $2$, respectively.  

Since $2\rho=\tfrac{n}{2(n-1)}$, we conclude that $g_{\kil}$ is $G$-neutrally stable of nullity $n-1$ for any $n\geq 4$ and it is $G$-unstable of coindex $3$ and $G$-degenerate of nullity $2$ for $n=3$.

\section{Two isotropy summands}\label{r2-sec}

We study in this section standard Einstein metrics on homogeneous spaces with only two irreducible isotropy summands, i.e., $\pg=\pg_1\oplus\pg_2$.  These spaces were classified in \cite{DckKrr}, and some corrections were added in \cite[Appendix A]{He}.  

\begin{remark}
The authors take the opportunity to note that the space $E_8/\Spin(9)$ (see Table \ref{tableIB2}, 10) was missed in the above two papers.  
\end{remark}

We assume that the summands are inequivalent since, according to \cite{DckKrr}, the only case where this is not the case is $\sog(8)/\ggo_2$ (see Table \ref{tableIA}, 9), which will be treated at the end of the section.  Since $\dim{\mca_1^G}=1$, we have that $\scalar|_{\mca_1^G}$ is a one-variable function and so $\Spec(\lic_\pg)=\{ 0,\lambda_\pg\}$.  It follows from \cite[Section 3.1]{stab-dos} that for the standard metric $g_{\kil}$ one has 
\begin{equation}\label{r2-rho}
2\rho =1 -\tfrac{[111]+[122]+2[112]}{2d_1} = 1 -\tfrac{[222]+[112]+2[122]}{2d_2}, 
\end{equation}
and
\begin{equation}\label{r2-lambda}
\lambda_\pg = \tfrac{d_1+d_2}{d_1d_2} \left([112]+[122]\right).   
\end{equation}

If $K$ is not a maximal subgroup of $G$, then it can be assumed that $[112]=0$ (i.e., $\kg\oplus\pg_1$ is a subalgebra) and it is known that there exist at most two $G$-invariant Einstein metrics on $G/K$ (see \cite[Theorem (3.1)]{WngZll} or \cite[Theorem 3.1]{DckKrr}).  

On the other hand, if $K$ is a maximal subgroup of $G$, then $[112],[122]>0$, there exist at least one (a global maximum, see \cite[Theorem (2.2)]{WngZll}) and at most three $G$-invariant Einstein metrics on $G/K$ (see \cite[Section 3.2]{DckKrr}) and $g_{\kil}$ is Einstein in exactly the following four cases (see \cite[Section 6]{DckKrr}):  Table \ref{tableIA}, items 3a and 3b and Table \ref{tableIB2}, items 8 and 11.  However, we do not know a priori whether $g_{\kil}$ is a global maximum or not.  

\begin{lemma}\label{r2-Kmax}
Assume that $g_{\kil}$ is Einstein, $d_1=d_2$, $[111]=[222]$ and $[112]=[122]>0$.  Then, the following conditions are equivalent:  
\begin{enumerate}[{\rm (i)}] 
\item $g_{\kil}$ is a global maximum. 

\item $g_{\kil}$ is the unique $G$-invariant Einstein metric.  
\end{enumerate}   
Otherwise, $g_{\kil}$ is a local minimum and there exist other two Einstein metrics (both necessarily local maxima and at least one of them global maximum).    
\end{lemma}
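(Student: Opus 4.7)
The plan is to exploit the $\ZZ_2$ symmetry of $\mca_1^G$ created by the hypotheses $d_1=d_2$, $[111]=[222]$, $[112]=[122]$, together with the known bound of at most three $G$-invariant Einstein metrics when $K$ is maximal in $G$.

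First I would parameterize $\mca_1^G$ explicitly. Any $G$-invariant metric can be written as $\ip=t_1(-\kil_\ggo)|_{\pg_1}+t_2(-\kil_\ggo)|_{\pg_2}$ with $t_1,t_2>0$, and the unit volume condition reads $t_1^{d_1/2}t_2^{d_2/2}=c$ for some constant. Since $d_1=d_2$, this becomes $t_1t_2=c'$, so setting $t_1=e^s c'^{1/2}$, $t_2=e^{-s}c'^{1/2}$ identifies $\mca_1^G\cong\RR$, with $g_{\kil}$ corresponding to $s=0$. The Wang–Ziller scalar curvature formula
\[
\scalar(t_1,t_2)=\unm\sum_i d_i b_i t_i^{-1}-\unc\sum_{i,j,k}[ijk]\,\tfrac{t_k}{t_it_j}
\]
combined with the symmetry assumptions (which force $b_1=b_2$ in addition) shows that $\scalar(s)=\scalar(-s)$; thus $\scalar$ is an even function of $s$.

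Next I would observe that as $s\to\pm\infty$ one has $\scalar(s)\to -\infty$, because the positivity $[112],[122]>0$ (which holds precisely because $K$ is maximal in $G$) makes the cross terms $-\unc[112]t_2 t_1^{-2}$ and $-\unc[122]t_1 t_2^{-2}$ drive $\scalar$ to $-\infty$ on both ends. Combined with the general bound (\cite[Thm.~3.1]{WngZll}, \cite[Sec.~3.2]{DckKrr}) that $G/K$ carries at most three $G$-invariant Einstein metrics in the maximal case and the existence of a global maximum, we get that the number of critical points of $\scalar|_{\mca_1^G}$ is either $1$ or $3$; the even symmetry in $s$ (non-fixed critical points come in pairs $\pm s_0$) rules out the value $2$.

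Finally I would conclude by elementary one-variable analysis. If there is only one Einstein metric, it has to be $g_{\kil}$ (as a global maximum is always present), giving (i)$\Leftrightarrow$(ii) in this branch. If there are three Einstein metrics, they occur at $s=0$ and $s=\pm s_0$ for some $s_0>0$; since $\scalar$ is even and $\to -\infty$ at both ends, the only possible critical pattern is local max at $\pm s_0$, local min at $0$, so $g_{\kil}$ is a local minimum while $g(\pm s_0)$ are both local maxima (equal scalar values by symmetry, hence both global maxima by the existence of a global maximum). This proves the ``otherwise'' statement, and also shows $g_{\kil}$ is not a global maximum in this branch, completing the equivalence. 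The main obstacle, which is actually fairly mild, is verifying that $b_1=b_2$ under the stated combinatorial hypotheses so that the scalar curvature is genuinely invariant under the swap; this follows from \eqref{sumijk} applied twice together with $d_1=d_2$, $[111]=[222]$, $[112]=[122]$.
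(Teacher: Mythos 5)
Your reduction to a one-variable problem, the evenness of $\scalar(s)$ under the hypotheses $d_1=d_2$, $[111]=[222]$, $[112]=[122]$, and the decay $\scalar(s)\to-\infty$ as $s\to\pm\infty$ are all correct, and up to the change of variable $x=e^{s}$ this is the same one-variable function the paper analyzes. The gap is in the final step. From ``three critical points $0,\pm s_0$, even, tending to $-\infty$ at both ends'' you conclude that ``the only possible critical pattern is local max at $\pm s_0$, local min at $0$''. That is not forced by these ingredients: the configuration in which $s=0$ is a (global) maximum and $\pm s_0$ are degenerate inflection-type critical points (where $\scalar'$ vanishes without changing sign, $\scalar$ being strictly decreasing on all of $(0,\infty)$) is compatible with evenness, with the decay at infinity, with the existence of a global maximum, and with the bound of at most three $G$-invariant Einstein metrics (which counts metrics, not roots with multiplicity). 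If that configuration occurred, (i) would hold while (ii) fails, contradicting the very equivalence you are proving; so it must be excluded, and none of your stated ingredients excludes it. (Note also that the outer critical points cannot be local minima, so the real dichotomy to resolve is precisely max--min--max versus max-at-$0$-with-inflections.)

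The paper closes exactly this point by an explicit computation: writing $\scalar(x)=a\left(x+\tfrac1x\right)-b\left(x^3+\tfrac{1}{x^3}\right)$ with $a=\tfrac{d_1}{2}-\unc[111]-\unm[112]$ and $b=\unc[112]>0$, one gets $x^4\scalar'(x)=-3b(x^2-1)\left(x^4+\tfrac{3b-a}{3b}x^2+1\right)$, so the squares of the critical points other than $x=1$ are the roots of $z^2+\tfrac{3b-a}{3b}z+1$, whose product equals $1$. A double root would therefore force $z=1$, i.e.\ the outer critical points can only degenerate by colliding with $x=1$; when they exist and are distinct from $1$ they are simple zeros of $\scalar'$, so $\scalar'$ changes sign there and the pattern max--min--max is forced. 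Your argument needs this (or an equivalent nondegeneracy statement, e.g.\ that the critical-point equation is polynomial of bounded degree and its relevant roots come in reciprocal pairs) to be complete; the symmetry and the count of critical points alone do not suffice.
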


\begin{proof}
If $a:=\frac{d_1}{2}-\unc[111]-\unm[112]$ and $b:=\unc[112]$ (note that $a>b>0$), then the scalar curvature $\scalar(x)$ of the metric $(x,\frac{1}{x})$ satisfies (see e.g. \cite[(19)]{stab-dos})
\begin{align*}
\scalar(x) &=  a\left(x+\tfrac{1}{x}\right) - b\left(x^3+\tfrac{1}{x^3}\right), \\ 
\scalar'(x) &=  a\left(1-\tfrac{1}{x^2}\right) - 3b\left(x^2-\tfrac{1}{x^4}\right), \\ 
\scalar''(x) &=  2a\tfrac{1}{x^3} - 6bx -12b\tfrac{1}{x^5}.   
\end{align*}
Thus $\scalar(x)\to-\infty$ as $x\to 0$ or $x\to\infty$.  Since $\scalar''(1)=2a-18b$, $g_{\kil}$ is $G$-stable if and only if $a<9b$ (note that this is equivalent to $2\rho<\lambda_\pg$ and to $d_1\rho<2[112]$).  

It also follows that the squares of the other potential critical points of $\scalar$ are zeroes of $z^2+\frac{3b-a}{3b}z+1$, so either $a\leq 9b$ and $g_{\kil}$ is the unique critical point (necessarily a global maximum, and $G$-degenerate if and only if $a=9b$) or $a>9b$ and there exist other two Einstein metrics $x_1<1<x_2$ (both necessarily local maxima and at least one of them global maximum) and $g_{\kil}$ is a local minimum, concluding the proof.  
\end{proof}

In what follows, we find the $G$-stability type of the standard metric in many cases.

\subsection{$\sog(n^2)/\sog(n)\oplus\sog(n)$}\label{A.3a} 
(See Table \ref{tableIA}, 3a).  
The inclusion $K\subset G$ is defined via the representation $\RR^n\otimes\RR^n$ of $\SO(n)\times\SO(n)$, so $d_1=d_2=\frac{n(n-1)^2(n+2)}{4}$.  According to \cite[Section 2]{WngZll2}, the Casimir constant is given by $E(\chi_i)/\alpha_G$ using the notation in that paper, so in this case, it follows from \cite[Tables in pp.602 and pp.583]{WngZll2} that it is $\frac{2(n-1)}{n(n^2-2)}$.   The Einstein constant is therefore given by $\rho=\unc+\frac{n-1}{n(n^2-2)}$.  

The automorphism $(X,Y)\mapsto (Y,X)$ of $\so(n)\oplus\so(n)$ forces $[111]=[222]$ and $[112]=[122]$. 
Furthermore, \eqref{r2-rho} gives an expression for $[111]+3[112]$, thus it is sufficient (and necessary) to determine any of these two nonzero structural constants to know the whole picture. 

\begin{lemma}
We have that $[112]=\tfrac{n(n-1)^2(n-2)(n+2)^2}{16(n^2-2)}$.
\end{lemma}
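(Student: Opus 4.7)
Via the identification $\sog(V\otimes V)\cong \sog(V_1)\otimes S^2V_2\oplus S^2V_1\otimes\sog(V_2)$, with $V_1=V_2=V=\RR^n$ and $A\otimes B$ acting as $(A\otimes B)(v_1\otimes v_2)=Av_1\otimes Bv_2$, the subalgebra $\kg=\sog(V_1)\otimes\RR I\oplus\RR I\otimes\sog(V_2)$ is precisely the image of the diagonal embedding $\SO(n)\times\SO(n)\to\SO(n^2)$, so the two isotropy summands can be taken as
\[
\pg_1=\sog(V_1)\otimes S^2_0V_2,\qquad \pg_2 = S^2_0 V_1\otimes\sog(V_2).
\]
Using $(A\otimes B)(A'\otimes B')=AA'\otimes BB'$ together with the symmetric/skew-decomposition of $AA'$ and $BB'$, one obtains for $A,A'\in\sog(V)$ and $B,B'\in S^2_0V$
\[
[A\otimes B,\,A'\otimes B'] = \tfrac12[A,A']\otimes\{B,B'\}+\tfrac12\{A,A'\}\otimes[B,B'],
\]
whose $\pg_2$-component is $\tfrac12\{A,A'\}_0\otimes[B,B']$ (here $\{\cdot,\cdot\}$ denotes the anticommutator and $(\cdot)_0$ the traceless part).

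Since $\kil_\ggo=(n^2-2)\tr_{V\otimes V}$, I would first compute the structural constant with respect to the inner product $-\tr$ instead of $-\kil_\ggo$ and then use the general rescaling identity $[ijk]_{-\kil}=\tfrac{1}{n^2-2}\,[ijk]_{-\tr}$. With $-\tr$, the inner product on $\pg_1$ factors as $\la A\otimes B,A'\otimes B'\ra_{-\tr}=\la A,A'\ra_{\sog}\la B,B'\ra_{S^2}$, so an orthonormal basis of $\pg_1$ has the form $\{\omega_a\otimes\sigma_b\}$ with $\{\omega_a\}$ ON in $\sog(V_1)$ (under $-\tr$) and $\{\sigma_b\}$ ON in $S^2_0V_2$ (under $\tr$); similarly for $\pg_2$ one takes $\{\sigma_c\otimes\omega_d\}$. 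The bracket formula then cleanly separates the $V_1$ and $V_2$ factors and yields
\[
[112]_{-\tr} \;=\; 4\,P\,Q,\qquad P:=\sum_{a,a',c}\tr(\omega_a\omega_{a'}\sigma_c)^2,\qquad Q:=\sum_{b,b',d}\tr(\sigma_b\sigma_{b'}\omega_d)^2,
\]
where $P$ involves ON bases of $\sog(V),\sog(V),S^2_0V$ and $Q$ those of $S^2_0V,S^2_0V,\sog(V)$.

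The core of the proof is the evaluation of $P$ and $Q$, for which I would establish the two ``partial-Casimir'' identities
\[
\sum_a\omega_aX\omega_a=\tfrac12\bigl(X^T-\tr(X)I\bigr),\qquad \sum_b\sigma_bX\sigma_b=\tfrac12 X^T-\tfrac1n X+\tfrac12\tr(X)I,
\]
valid for any $X\in\glg_n(\RR)$. These can be proved by direct computation in the standard bases $\{\tfrac{1}{\sqrt{2}}(e_{kl}-e_{lk})\}$ and $\{\tfrac{1}{\sqrt{2}}(e_{kl}+e_{lk}),h_m\}$, or more conceptually from the values of the $\sog(V)$-Casimir on $\glg_n(\RR)=\sog(V)\oplus S^2_0V\oplus\RR I$ under $-\tr$ (namely $(n-2)I$, $nI$, and $0$ respectively) together with $\sum_a\omega_a^2=-\tfrac{n-1}{2}I$. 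Combined with the Plancherel-type relations $\sum_c\tr(M\sigma_c)^2=\|M_{\mathrm{sym},0}\|^2$ and $\sum_d\tr(M\omega_d)^2=\|M_{\mathrm{skew}}\|^2$, a short calculation yields
\[
P=\tfrac{(n-1)(n-2)(n+2)}{8},\qquad Q=\tfrac{n(n-1)(n+2)}{8},
\]
whence $[112]_{-\tr}=4PQ=\tfrac{n(n-1)^2(n-2)(n+2)^2}{16}$, and dividing by $n^2-2$ gives the claimed value.

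The only real obstacle is bookkeeping: tracking normalizations across the change of inner product from $-\kil_\ggo$ to $-\tr$, keeping straight which tensor factor lives on $V_1$ versus $V_2$, and carrying the traceless projections through the bracket formula. Once the factorization $[112]=\tfrac{4PQ}{n^2-2}$ is in place, the two projection identities above do essentially all of the remaining work in computing $P$ and $Q$.
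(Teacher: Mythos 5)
Your proposal is correct, and it takes a genuinely different route from the paper's proof. The paper works directly with the explicit $n^2\times n^2$ matrix basis $X_{(ij),(kl)}=\tfrac{1}{\sqrt{2(n^2-2)}}(E_{(ij),(kl)}-E_{(kl),(ij)})$ of $\sog(n^2)$, writes down orthonormal bases $\mathcal B^{(h)}=\mathcal B_1^{(h)}\cup\mathcal B_2^{(h)}$ of each $\pg_h$ as certain linear combinations indexed by pairs $(i,k)$ and $(j,l)$, and evaluates the structural constant by brute force as a sum of four contributions $\mathbf{B}(p,q)$, $p,q\in\{1,2\}$, whose values it simply records; the authors explicitly describe this as ``tedious but straightforward'' and leave details to the reader. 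You instead exploit the $K$-module isomorphism $\sog(V\otimes V)\cong\sog(V_1)\otimes S^2 V_2\oplus S^2V_1\otimes\sog(V_2)$, under which $\pg_1=\sog(V_1)\otimes S^2_0V_2$, $\pg_2=S^2_0V_1\otimes\sog(V_2)$, the trace form factorizes across the tensor product, and the bracket has the clean form $[A\otimes B,A'\otimes B']=\tfrac12[A,A']\otimes\{B,B'\}+\tfrac12\{A,A'\}\otimes[B,B']$. Combined with the (correct) observations that $\tr(\{A,A'\}S)=2\tr(AA'S)$ for $A,A'$ skew and $S$ symmetric, and $\tr([S,S']A)=2\tr(SS'A)$ for $S,S'$ symmetric and $A$ skew, this yields the factorization $[112]_{-\tr}=4PQ$, and the two partial-Casimir identities $\sum_a\omega_a X\omega_a=\tfrac12(X^T-\tr(X)I)$, $\sum_b\sigma_b X\sigma_b=\tfrac12 X^T-\tfrac1n X+\tfrac12\tr(X)I$ (both of which I checked) reduce $P$ and $Q$ to one-line evaluations; your stated values $P=\tfrac{(n-1)(n-2)(n+2)}{8}$, $Q=\tfrac{n(n-1)(n+2)}{8}$ and the rescaling factor $\tfrac1{n^2-2}$ are all correct. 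The trade-off: the paper's approach is elementary and requires no structural insight but a long hand computation; yours front-loads the representation-theoretic decomposition and in return replaces the four mysterious sums $\mathbf{B}(p,q)$ with a single multiplicative identity, which also makes the mechanism of the answer (e.g.\ the appearance of $(n-1)^2$ and $(n+2)^2$ as squares) transparent. Your approach would also adapt essentially verbatim to the companion case $\sog(4n^2)/\spg(n)\oplus\spg(n)$ (Table 1, 3b), which the paper leaves open precisely because of the length of the analogous brute-force computation.
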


\begin{proof}
The proof involves tedious but straightforward calculations. 
We next explain the strategy leaving the details to the reader. 

We use the identification $\R^{n^2}=\R^n\otimes\R^n$ with basis $e_i\otimes e_j$ for $1\leq i,j\leq n$. 
Since $\kil_{\so(n^2)}(X,Y)=(n^2-2)\tr(XY)$ for all $X,Y\in\so(n^2)$, $X_{(ij),(kl)}:=\tfrac{1}{\sqrt{2(n^2-2)}} (E_{(ij),(kl)}-E_{(kl),(ij)})$ for $i<k$ and $j<l$ form a basis of $\so(n^2)$. 
Here, $E_{(ij),(kl)}$ is the $n^2\times n^2$ matrix with a $1$ in the entry $((ij),(kl))$ and zero otherwise.  Let $h$ be in $\{1,2\}$.
It is not difficult to see that an orthonormal basis for $\fp_h$ is given by $\mathcal B^{(h)}:=\mathcal B_1^{(h)}\cup \mathcal B_2^{(h)}$, where 
$
\mathcal B_1^{(h)} = \{
\tfrac{1}{\sqrt{2}} (X_{(ij),(kl)} -(-1)^h X_{(il),(kj)}): i<k,\, j<l\}
$, 
$
\mathcal B_2^{(1)} =\{
\sum_{t=1}^{n-1} (\delta_{t,j}-\tfrac{1}{n+\sqrt{n}}) X_{(it),(kt)} -\tfrac{1}{\sqrt{n}} X_{(in),(kn)} : i<k,\, j<n\}
$, and 
$
\mathcal B_2^{(2)} =\{
\sum_{t=1}^{n-1} (\delta_{t,j}-\tfrac{1}{n+\sqrt{n}}) X_{(ti),(tk)} -\tfrac{1}{\sqrt{n}} X_{(ni),(nk)} : i<k,\, j<n\}
$.
It follows that
\begin{equation*} 
[12h]
= 
\sum_{1\leq p,q\leq 2}
\sum_{X_1\in \mathcal B_p^{(1)}} 
\sum_{X_2\in \mathcal B_q^{(2)}}
\sum_{X_3\in\mathcal B^{(h)}} 
\kil  \big(
	[X_1,X_2],X_3 \big)^2
=: 
\sum_{1\leq p,q\leq 2} \textbf{B}(p,q).
\end{equation*}
Tiresome computations give $\textbf{B}(1,1)=\tfrac{n^3(n-1)^2(n-2)}{16(n^2-2)}$,
$\textbf{B}(1,2)=\textbf{B}(2,1)=\tfrac{n^2(n-1)^2(n-2)}{8(n^2-2)}$, and $\textbf{B}(2,2)=\tfrac{n(n-1)^2(n-2)}{4(n^2-2)}$, and the assertion follows. 
\end{proof}

Since $\lambda_\pg = \tfrac{4}{d_1}[112]=\frac{n^2-4}{n^2-2}$ by \eqref{r2-lambda}, it is easy to see that $\lambda_\fp >2\rho$ if and only if $n^3-10n+4>0$, which is true for every $n\geq3$. 
We conclude that $g_{\kil}$ is $G$-stable.

\begin{remark}
To the best of authors' knowledge, the classification of $G$-invariant Einstein metrics on $G/K$ was unknown.  Lemma~\ref{r2-Kmax} implies that $g_{\kil}$ is the unique $G$-invariant Einstein metric and a global maximum. 
\end{remark}

\subsection{$\sog(4n^2)/\spg(n)\oplus\spg(n)$}\label{A.3b} 
(See Table \ref{tableIA}, 3b).   
The inclusion $K\subset G$ is defined via the real representation of $\Sp(n)\times\Sp(n)$ whose complexification is $\mathbb C^{2n}\otimes\mathbb C^{2n}$, so $d_1=d_2=n(n-1)(2n+1)^2$.  In much the same way as the above case, one obtains from \cite[Tables in pp.602 and pp.583]{WngZll2} that the Casimir constant is $\frac{2n+1}{2n(2n^2-1)}$, so the Einstein constant is given by $\rho=\unc+\frac{2n+1}{4n(2n^2-1)}$.  Again, $[111]=[222]$, $[112]=[122]$, $2\rho = 1-\tfrac{1}{2d_1}([111]+3[112])$ and $\lambda_\pg = \tfrac{4}{d_1}[112]$. 

Although it is sufficient to compute $[111]$ or $[112]$ to determine $\lambda_\pg$ and consequently the $G$-stability type of $(G/K,g_{\kil})$, the authors decided to leave this case open due to the great difficulty they encountered in the computations.

\subsection{$\sug(pq+l)/\sug(p)\oplus\sug(q)\oplus\ug(l)$}\label{A.6} 
(See Table \ref{tableIA}, 6).  
The inclusion $K\subset G$ is defined via the representation $(\CC^p\otimes\CC^q)\oplus\CC^l$ of $(\U(p)\times\U(q))\times\U(l)$ and so $\kg$ is isomorphic to $\sg(\ug(p)\oplus\ug(q)\oplus\ug(l))/\RR Z$ for a certain $Z\in\zg$, the center of $\sg(\ug(p)\oplus\ug(q)\oplus\ug(l))$.  It is proved in \cite[Example 8, pp.576]{WngZll2} that the standard metric is Einstein if and only if $l=\frac{p^2+q^2+1}{pq}$, for which there are infinitely many integer solutions.  Note that $\dim{\kg}= p^2+q^2+l^2-2$ and $d=p^2q^2+p^2+q^2+3$.  We have that $\pg=\pg_1\oplus\pg_2$, where $\pg_2$ is the off diagonal block of $\sug(pq+l)$ and $\pg_1$ is given by $\pg_1=\widetilde{\pg}_1\oplus\RR Z^\perp$, where $\sug(pq)=\sg(\ug(p)\oplus\ug(q))\oplus\widetilde{\pg}_1$ and $\{ Z,Z^\perp\}$ is an orthogonal basis of the center of $\zg$, that is,   
$$
\sug(pq+l)/\zg = 
\left[\begin{matrix}
\sg(\ug(p)\oplus\ug(q))\oplus\widetilde{\pg}_1 & \pg_2 \\ 
\pg_2 & \sug(l)
\end{matrix}\right].  
$$
Thus $d_1= p^2q^2-p^2-q^2+1$, $d_2=2pql=2p^2+2q^2+2$ and the only nonzero structural constants are $[111]$ and $[122]$.  It follows from \eqref{r2-rho} and \eqref{r2-lambda} that 
$$
2\rho = 1-\tfrac{1}{d_2}[122], \qquad \lambda_\pg = \tfrac{d}{d_1d_2}[122].  
$$
On the other hand, since $\kil_{\sug(p)}=\tfrac{1}{q^2}\kil_{\sug(pq)}|_{\sug(p)}$ and $\kil_{\sug(q)}=\tfrac{1}{p^2}\kil_{\sug(pq)}|_{\sug(q)}$, we have that
$$
\kil_{\sug(p)}=\tfrac{p}{q(pq+l)}\kil_{\sug(pq+l)}|_{\sug(p)},  \; \kil_{\sug(q)}=\tfrac{q}{p(pq+l)}\kil_{\sug(pq+l)}|_{\sug(q)},  \; \kil_{\sug(l)}=\tfrac{l}{pq+l}\kil_{\sug(pq+l)}|_{\sug(l)}, 
$$
and hence according to \eqref{rhoci}, $\rho=\unc+\frac{1}{2d}\alpha$, where 
\begin{align*}
\alpha =& \left(1-\tfrac{p}{q(pq+l)}\right)(p^2-1) + \left(1-\tfrac{q}{p(pq+l)}\right)(q^2-1) + \left(1-\tfrac{l}{pq+l}\right)(l^2-1) + 1 \\ 
=& \tfrac{p^4q^2+p^2q^4-2+(p^2+q^2+1)^2+p^2+q^2+1}{pq(pq+l)} 
=\tfrac{(p^2q^2+p^2+q^2+3)(p^2+q^2)}{p^2q^2+p^2+q^2+1},
\end{align*}  
which implies that 
$$
\rho= \frac{p^2q^2+3p^2+3q^2+1}{4(p^2q^2+p^2+q^2+1)}, \qquad \lambda_\pg = \frac{p^2q^2+p^2+q^2+3}{2(p^2q^2+p^2+q^2+1)},
$$
since $[122]=\frac{d_2(d-2\alpha)}{2d}$ and so $\lambda_\pg =\tfrac{d-2\alpha}{2d_1}$.  Now a straightforward calculation gives that $\lambda_\pg<2\rho$ if and only if $1<p^2+q^2$,
and hence $g_{\kil}$ is always $G$-unstable as $p,q\geq 2$.

\subsection{$\spg(3n-1)/\spg(n)\oplus\ug(2n-1)$}\label{A.7a} 
(See Table \ref{tableIA}, 7a).  
This case was already solved for $n\geq 3$ in part (i) at the end of \S\ref{Gstab-sec}.  The intermediate subalgebra $\kg\subset\hg:=\spg(n)\oplus\spg(2n-1)\subset\ggo$ gives the reductive decomposition $\ggo=\kg\oplus\pg_1\oplus\pg_2$, where $\hg=\kg\oplus\pg_1$.  Since $H/K$ and $G/H$ are symmetric spaces, we obtain that the only nonzero structural constant is $[122]$.  It follows from \eqref{r2-rho} that $d_2=2d_1$ and $[122] = (1-2\rho)2d_1$, and from \eqref{r2-lambda} that  
$$
\lambda_\pg = \tfrac{3d_1}{2d_1^2}(1-2\rho)2d_1 = 3(1-2\rho) < 2\rho,
$$
since $2\rho=\tfrac{5}{6}$.  Hence $g_{\kil}$ is $G$-unstable and a local minimum for any $n\geq 1$.

\subsection{$\sog(26)/\spg(1)\oplus\spg(5)\oplus\sog(6)$}\label{A.8} 
(See Table \ref{tableIA}, 8).  
For the intermediate subalgebra $\kg\subset\hg:=\sog(20)\oplus\sog(6)\subset\ggo$, we obtain the reductive decomposition $\ggo=\kg\oplus\pg_1\oplus\pg_2$, where $\hg=\kg\oplus\pg_1$, $H/K$ is isotropy irreducible and $G/H$ is symmetric.  Note that $d_1=132$ and $d_2=120$.  The only nonzero structural constants are therefore $[111]$ and $[122]$.   

In order to apply formula \eqref{rhoci} to compute $\rho$, we set $\kg_1:=\spg(1)$, $\kg_2:=\spg(5)$ and $\kg_3:=\sog(6)$.  We have that $c_3=\frac16$ by \cite[pp.37]{DtrZll} and by following the method given in \cite[pp.38--40]{DtrZll} one obtains that $c_1=\frac{1}{60}$ and $c_2=\unc$.  Thus $\rho=\tfrac{29}{80}$ and it therefore follows from \eqref{r2-rho} that $[122]=33$.  Now \eqref{r2-lambda} gives that 
$$
\lambda_\pg = \tfrac{21}{40} < \tfrac{29}{40} = 2\rho, 
$$
and hence $g_{\kil}$ is $G$-unstable and a local minimum.

\subsection{$\eg_6/\sug(2)\oplus\sog(6)$}\label{B.4} 
(See Table \ref{tableIB1}, 4).  
According to \cite[IV.16]{DckKrr}, $g_{\kil}$ is the only $G$-invariant Einstein metric, hence it is necessarily an inflection point of $\scalar|_{\mca_1^G}$.  In particular, it is $G$-neutrally stable.

\subsection{$\eg_8/\sping(9)$}\label{B.10} (See Table \ref{tableIB2}, 10).  
The embedding of $\fk$ into $\eg_8$ is via the spin representation $\fk\to \so(16)\subset \eg_8$, which implies that the only nonzero structural constants are  $[111]$ and $[122]$.
According to \eqref{rhoci}, $\rho=\tfrac14+\tfrac{36(1-c)}{424}$, where $\kil_{\spin(9)}=c\kil_{\eg_8}|_{\spin(9)}$. 
	Furthermore, one can see that $c=\tfrac{7}{60}$ by using \cite[pp.38--40]{DtrZll}, thus $\rho=\tfrac{13}{40}$.
We have that $d_1=84$ and $d_2=128$, so \eqref{r2-rho} gives that $[122]=\tfrac{224}{5}$.  By \eqref{r2-lambda} we conclude that 
$$
\lambda_\pg = \tfrac{53}{60} > \tfrac{13}{20} = 2\rho, 
$$
and thus $g_{\kil}$ is $G$-stable.

\subsection{$\eg_8/\sug(5)\oplus\sug(5)$}\label{B.11} 
(See Table \ref{tableIB2}, 11).   
It is straightforward to prove that $\kil_{\sug(5)} = \tfrac16 \kil_{\fe_8}|_{\su(5)}$ for both copies, which implies that $\rho=\tfrac{7}{20}$ by using formula \eqref{rhoci}.  It can be shown that the only nonzero structural constants are $[112]$ and $[122]$, and since $d_1=d_2=100$ and $\rho=\tfrac{7}{20}$, one obtains that $[112]=[122]=20$ by \eqref{r2-rho}.  This implies that $\lambda_\pg = \tfrac{4}{5}$.  Since $2\rho<\lambda_\pg$ we obtain that $g_{\kil}$ is $G$-stable; moreover, it is the only Einstein metric on $G/K$ and a global maximum by Lemma \ref{r2-Kmax}.

\subsection{$\sog(8)/\ggo_2$}\label{A.9} 
(See Table \ref{tableIA}, 9).  
Note that $\SO(8)/G_2=S^7\times S^7$ as a manifold.  It follows from \cite[Section 5]{Krr} that $d_1=d_2=7$, $[112]=[222]=0$ and $[111]=[122]=\tfrac{7}{6}$, thus $\rho=\tfrac{5}{12}$ by \eqref{r2-rho}.  We now use \eqref{r2-lambda} and \cite[Remark 3.3]{stab-dos} to conclude that 
$$
\lambda_\pg\leq \tfrac{1}{3} = \tfrac{2}{7}\tfrac{7}{6}  <  \tfrac{5}{6} = 2\rho,
$$
and hence $g_{\kil}$ is $G$-unstable.  It can be proved that $g_{\kil}$ is actually a local minimum (see \cite{Gtr}).

\section{Case $\SO(\nn)/K_1\times\dots\times K_l$}\label{som-sec}

In this section, we show that all the standard Einstein metrics constructed in \cite[Example 3]{WngZll2}, which correspond to cases 4 and 5 in Table \ref{tableIA}, are $G$-unstable and compute the spectrum of $\lic_\pg$ together with the multiplicities.   

Let $l_1\leq l_2\leq l$ be non-negative integers and assume $2\leq l$.  For each $1\leq i\leq l$, we choose an irreducible symmetric space $G_i/K_i$ according to the following constraints: 
\begin{enumerate}[{\small $\bullet$}]
\item $\SO(2n_i)/\SO(n_i)\times\SO(n_i)$ or $\Sp(2n_i)/\Sp(n_i)\times\Sp(n_i)$ for any $1\leq i\leq l_1$ (Grassmannian spaces),

\item $\SO(n_i+1)/\SO(n_i)= S^{n_i}$ for any $l_1+1\leq i\leq l_2$ (spheres), 

\item $K_i$ is simple and $G_i/K_i$ is not a sphere (as above) for any $l_2+1\leq i\leq l$.
\end{enumerate} 
We will see below that the construction only depends on the spaces $G_i/K_i$ up to covering.
In Table~\ref{giki}, all possible $G_i/K_i$ are listed without repetition.  

\begin{remark}
The spaces $\SO(4)/\SO(2)\times\SO(2)$, $\Sp(2)/\Sp(1)\times\Sp(1)$, $\SU(4)/\Sp(2)$ and $(H\times H)/\Delta(H)$ with $H=\SU(2)$ or $\SO(3)$ were all omitted  in Table~\ref{giki} for being locally isometric to $\SO(3)/\SO(2)\times\SO(3)/\SO(2)$, ${\SO(5)}/{\SO(4)}$, $\SO(6)/\SO(5)$ and ${\SO(4)}/{\SO(3)}$, respectively.
\end{remark}

For each $G_i/K_i$, we consider the reductive decomposition $\ggo_i=\kg_i\oplus\mg_i$, set  $\nn_i=\dim{\mg_i}=\dim{G_i/K_i}$, and let $\pi_i:K_i\to\so(\mg_i)$ denote its isotropy representation.  
Consider the homogeneous space 
$$
\SO(\nn_i)/K_i:=\SO(\mg_i)/\pi_i(K_i).
$$ 
If $\sog(\nn_i)=\kg_i\oplus V_i$ is its reductive decomposition, then $\wedge^2 \mg_i \simeq \ad_{\fk_i}\oplus V_i$ as $K_i$-modules.  It turns out that $V_i=0$ if and only if $l_1+1\leq i\leq l_2$ (i.e.\ $G_i/K_i$ is a sphere).  Furthermore, as a representation of $K_i$, $V_i$ is irreducible for any $i>l_2$ and $V_i\simeq V_i^1\oplus V_i^2$ with $V_i^1,V_i^2$ inequivalent and irreducible submodules of the same dimension for any $i\leq l_1$ (see \S\ref{A.3a} and \S\ref{A.3b}).  Note that the each space $\SO(\nn_i)/K_i$ with $i\leq l_1$ corresponds to case 3 in Table~\ref{tableIA}.

We set  $G=G_1\times\dots\times G_l$ and $K=K_1\times\dots\times K_l$ and now consider the isotropy representation $\pi$ of $G/K$ given by  
$$
\mg:=\mg_1\oplus\dots\oplus\mg_l, \qquad \pi=\pi_1+\dots+\pi_l, \qquad \nn:=\dim{\mg}=\nn_1 +\dots+\nn_l.
$$ 
We study in this section the homogeneous space 
$$
M=\SO(\nn)/K=\SO(\mg)/\pi(K),  
$$ 
i.e., the case Table \ref{tableIA}, 5.

\begin{table}
{\small 
$$
\begin{array}{c|c|c|c|c}
G_i/K_i & \text{Cond.} &\dim\fk_i & \nn_i & \tfrac{\dim\fk_i}{\nn_i} 
\\[2mm] \hline \hline \rule{0pt}{14pt}
	\SO(2n)/\SO(n)\times\SO(n) & n\geq 3 & n(n-1) & n^2 & \tfrac{n-1}{n}
\\[2mm] \hline \rule{0pt}{14pt}
	\Sp(2n)/\Sp(n)\times\Sp(n) & n\geq 2 & 2n(2n+1) & 4n^2 & \tfrac{2n+1}{2n}
\\[2mm] \hline \rule{0pt}{14pt} 
	\SO(n+1)/\SO(n) & n\geq 2 & \tfrac{n(n-1)}{2} & n & \tfrac{n-1}{2}
\\[2mm] \hline \rule{0pt}{14pt}
	\SU(n)/\SO(n) & \underset{n\neq4}{n\geq3} & \tfrac{n(n-1)}{2} & \tfrac{(n-1)(n+2)}{2} & \tfrac{n}{n+2}
\\[2mm] \hline \rule{0pt}{14pt}
	\SU(2n)/\Sp(n) & n\geq 3 & n(2n+1) & (n-1)(2n+1) & \tfrac{n}{n-1}
\\[2mm] \hline \rule{0pt}{14pt}
	E_6/\Sp(4) & - & 36& 42 & \tfrac{6}{7}
\\[2mm] \hline \rule{0pt}{14pt}
	E_6/F_4 & -& 52& 26& 2
\\[2mm] \hline \rule{0pt}{14pt}
	E_7/\SU(8)& - & 63& 70& \tfrac{9}{10}
\\[2mm] \hline \rule{0pt}{14pt}
	E_8/\Spin(16) & - & 120&128&\tfrac{15}{16}
\\[2mm] \hline \rule{0pt}{14pt}
	F_4/\Spin(9) & - & 36 & 16 & \tfrac{9}{4}
\\[2mm] \hline \rule{0pt}{14pt}
	(H\times H)/\Delta H & \dim H>3 & \dim{H} & \dim{H} & 1
\\[2mm] \hline \hline
\end{array}
$$}
\caption{
	Irreducible symmetric spaces admitted as $G_i/K_i$ in \S\ref{som-sec}. 
	$H$ is any compact simple Lie group in the last row.
} \label{giki}
\end{table}

\begin{remark}
This space belongs to the case Table~\ref{tableIA}, 4 if and only if $G_i/K_i=H_i\times H_i/\Delta H_i$ for any $i=1,\dots,l$, where $H_1,\dots,H_l$ are compact simple Lie groups (in particular, $l_1=l_2=0$) and so $K\simeq H_1\times\dots\times H_l$, $m=\dim{\kg}$ and $(\mg,\pi)$ is equivalent to the adjoint representation of $K$.  On the other hand, setting $G_i/K_i=\SO(n+1)/\SO(n)$ for all $i=1,\dots,l$ gives $M=\SO(ln)/\SO(n)^l$, which are (for $l\geq3$) the cases Table~\ref{tableIA}, 1b and 2c for $n=2$ and $n\geq3$, respectively. 
\end{remark}

Consider the reductive decomposition $\sog(\nn)=\kg\oplus\pg$, where $\pg$ is the $K$-representation such that $\wedge^2\mg=\ad_\fk\oplus \fp$.
Thus $\pg$ decomposes in pairwise inequivalent $K$-irreducible subspaces as
\begin{equation}\label{p5}
\pg =
\bigoplus_{i\leq l_1} \big(\fp_{(ii)^1} \oplus \fp_{(ii)^2}\big)  \oplus 
\bigoplus_{i>l_2} \fp_{(ii)} \oplus
\bigoplus_{i<j} \fp_{(ij)},
\end{equation}
where 
$\pg_{(ii)^\epsilon}:=V_i^\epsilon \subset\sog(\nn_i)$ for $\epsilon=1,2$ and $i\leq l_1$,
$\pg_{(ii)}:=V_i\subset\sog(\nn_i)$ for $i>l_2$, and 
$\pg_{(ij)}:=\mg_i\otimes\mg_j$ for all $i<j$ (we set $\pg_{(ji)}:=\pg_{(ij)}$). 
In particular, $\SO(\nn)/K$ is multiplicity-free.
We have that $d_{(ii)^\epsilon}=\dim{V_i^\epsilon}$ for $i\leq l_1$ (recall that $d_{(ii)^1}=d_{(ii)^2}$), $d_{(ii)}=\dim{V_i}$ for $i>l_2$, $d_{(ij)}=\nn_i\nn_j$ for $i<j$, and the number of irreducible terms in the isotropy representation is given by 
\begin{equation}\label{eq-conceptual:r}
r=2l_1 +l-l_2+\tfrac{l(l-1)}{2}.
\end{equation}
The situation for $l_1=1, l_2=2, l=3$ can be graphically described as follows:
\begin{equation*}
\so(\nn)=\so(\nn_1+\nn_2+\nn_3)=
\left[\begin{matrix}
\fk_1\oplus \fp_{(11)^1}\oplus \fp_{(11)^2} & \fp_{(12)} & \fp_{(13)} \\
\fp_{(12)} & \fk_2 &  \fp_{(23)} \\
\fp_{(13)} & \fp_{(23)} & \fk_3\oplus \fp_{(33)}
\end{matrix}\right].
\end{equation*}
For any $i\leq l_2$ we also set $\fp_{(ii)}:=V_i$ and $d_{(ii)}:=\dim V_i$, so $d_{(ii)^\epsilon}=\tfrac{d_{(ii)}}{2}$ for any $i\leq l_1$ and
\begin{equation*}
	\dim{\kg_i}+d_{(ii)} = \tfrac{m_i(m_i-1)}{2},
	\qquad\forall i=1,\dots,l.
\end{equation*}
One can check that $[\pg_{(ii)},\pg_{(jj)}]_\pg\subset\delta_{i,j}\, \pg_{(ii)}$, $[\pg_{(ii)},\pg_{(ij)}]_\pg\subset\pg_{(ij)}$ if $i\ne j$, 
$[\pg_{(ij)},\pg_{(kl)}]_\pg=0$ if $k\ne l$ and $\{ i,j\}$ and $\{ k,l\}$ are either equal or disjoint, and $[\pg_{(ij)},\pg_{(jk)}]_\pg\subset\pg_{(ik)}$ if $\#\{i,j,k\}=3$. 
The only nonzero structural constants are therefore given by
\begin{equation}\label{ijk-som}
\begin{aligned}
&
[(ij)(jk)(ik)],\quad\forall\#\{i,j,k\}=3,\quad &&
[(ii)^1(ij)(ij)],\, [(ii)^2(ij)(ij)],\quad\forall i\leq l_1,\; i\neq j,
\\ &
[(ii)(ij)(ij)],\quad\forall i>l_2,\; i\neq j, &&
[(ii)^1(ii)^1(ii)^1],\, [(ii)^2(ii)^2(ii)^2],\quad\forall i\leq l_1, 
\\ &
[(ii)(ii)(ii)],\quad\forall i>l_2, 
&&
[(ii)^1(ii)^1(ii)^{2}],\, [(ii)^1(ii)^2(ii)^{2}], \quad\forall i\leq l_1.
\end{aligned}
\end{equation}

It is proved in \cite[Example 3]{WngZll2} that the Killing metric $g_{\kil}$ on $M=\SO(\nn)/K$ is Einstein if and only if 
\begin{equation}
\frac{\dim\fk_1}{\nn_1}=\dots= \frac{\dim\fk_l}{\nn_l},
\end{equation} 
which will be assumed from now on.  
According to Table \ref{giki}, the numbers $\kappa$ with $\kappa=\tfrac{\dim\fk_i}{\nn_i}$ having more than one solution are listed in Table~\ref{tabla-coincidencias}
(see also the table in \cite[pp.575]{WngZll2}).
Note that $\SO(4)/\SO(3)$ is present in the row $\tfrac{\dim\fk_i}{\nn_i}=1$ as $(\SU(2)\times\SU(2))/\Delta\SU(2)$.

\begin{table}
{
$$
\begin{array}{c@{\qquad}c}
\tfrac{\dim\fk_i}{\nn_i} &  G_i/K_i  \\ \hline \hline 
\rule{0pt}{14pt}
\tfrac67 &
	\tfrac{\SO(14)}{\SO(7)\times \SO(7)},  
	\tfrac{\SU(12)}{\SO(12)},
	\tfrac{E_6}{\Sp(4)}
\\[2mm] \hline \rule{0pt}{14pt}
\tfrac{9}{10}&
	\tfrac{\SO(20)}{\SO(10)\times \SO(10)},
	\tfrac{\SU(18)}{\SO(18)},
	\tfrac{E_7}{\SU(8)}
\\[2mm] \hline \rule{0pt}{14pt}
\tfrac{15}{16}&
	\tfrac{\SO(32)}{\SO(16)\times \SO(16)},
	\tfrac{\SU(30)}{\SO(30)},
	\tfrac{E_8}{\SO(16)}
\\[2mm] \hline \rule{0pt}{14pt}
\tfrac{n-1}{n}&
	\tfrac{\SO(2n)}{\SO(n)\times \SO(n)},
	\tfrac{\SU(2n-2)}{\SO(2n-2)}
	\quad\text{for } \underset{n\neq 7,10,16}{n\geq3}
\\[2mm] \hline \rule{0pt}{14pt}
1 &
	\tfrac{H_i\times H_i}{H_i} \text{ ($H_i$ simple)} 
\\[2mm] \hline \rule{0pt}{14pt}
\tfrac{2n+1}{2n}&
	\tfrac{\Sp(2n)}{\Sp(n)\times \Sp(n)},
	\tfrac{\SU(4n+2)}{\Sp(2n+1)}
	\quad\text{for }n\geq2
\\[2mm] \hline \rule{0pt}{14pt}
\tfrac32 &
	\tfrac{\SO(5)}{\SO(4)}, 
	\tfrac{\SU(6)}{\Sp(3)}
\\[2mm] \hline \rule{0pt}{14pt}
2 &
	\tfrac{\SO(6)}{\SO(5)}, 
	\tfrac{E_6}{F_4}
\\[2mm] \hline \hline 
\end{array}
$$}
\caption{Multiple solutions of $\kappa=\tfrac{\dim\fk_i}{\nn_i}$.}\label{tabla-coincidencias}
\end{table}

\begin{remark}\label{rem-caso5:[111]y[112]}
It follows immediately from Table~\ref{tabla-coincidencias} that $l_1(l_2-l_1)=0$, that is, Grassmannian spaces and spheres cannot take part simultaneously. 
Furthermore, $l_1>0$ (resp.\ $l_1<l_2$) forces $G_i/K_i=G_1/K_1$ for all $i\leq l_1$ (resp.\ $G_i/K_i=G_{l_2}/K_{l_2}$ for all $l_1<i\leq l_2$).  If $l_1>0$, we denote by $[111],[112],[122],[222]$ the structural constants of $\SO(\nn_1)/K_1$, which were considered in \S\ref{A.3a} and \S\ref{A.3b}.  Although we do not know their values in the case \S\ref{A.3b}, we know that $[111]=[222]$, $[112]=[122]$, and we have an expression for $[111]+3[112]$. 
\end{remark}

\begin{remark}\label{rem-conceptual:esferas}
Table~\ref{tabla-coincidencias} also implies that if $G_i/K_i=\SO(n+1)/\SO(n)$ with $n\neq 4,5$ for some $i$, then $G_i/K_i$ is independent of $i$ giving $M=\SO(ln)/\SO(n)^l$ already considered. 
\end{remark}

It is also shown in \cite[pp.574]{WngZll2} that the Casimir constant on any $\pg_{(ij)}$ relative to $-\kil_{\so(\nn)}|_{\fk}$ equals $a:=\tfrac{2\dim \fk_i}{\nn_i(\nn-2)}$ and, since the Einstein constant for $g_{\kil}$ is $\rho= \tfrac14+\tfrac12 a$ (see \eqref{ricgB}), one obtains that
\begin{equation}\label{eq-conceptual:rho}
	\rho= \frac14 + \frac{\dim\fk_i}{\nn_i(\nn-2)}, \qquad\forall i.
\end{equation}

\begin{lemma}\label{som1}
If $g_{\kil}$ is Einstein, then the nonzero structural constants of the homogeneous space $\SO(\nn)/K$ (see \eqref{ijk-som}) are given as follows: 
\begin{align*}
[(ii)(ii)(ii)]&= \tfrac{d_{(ii)}}{\nn-2} \left(\nn_i-2-\tfrac{4\dim\fk_i}{\nn_i}\right), \quad  i>l_2, 
\\ 
[(ii)^1(ii)^1(ii)^1]&= [(ii)^2(ii)^2(ii)^2]=\tfrac{\nn_i-2}{\nn-2} [111], \quad  i\leq l_1, 
\\ 
[(ii)^1(ii)^1(ii)^2]&= [(ii)^1(ii)^2(ii)^2]= \tfrac{\nn_i-2}{\nn-2} [112], \quad  i\leq l_1, 
\\ 
[(ij)(ij)(ii)] &= \tfrac{\nn_jd_{(ii)}}{\nn-2}, \quad i\ne j, \quad  i>l_2,  
\\  
[(ij)(ij)(ii)^1] &= [(ij)(ij)(ii)^2]= \tfrac{\nn_jd_{(ii)}}{2(\nn-2)}, \quad i\ne j, \quad  i\leq l_1,  
\\  
[(ij)(ik)(jk)]&= \tfrac{\nn_i \nn_j \nn_k}{2(\nn-2)}, \quad\#\{i,j,k\}=3.
\end{align*} 
\end{lemma}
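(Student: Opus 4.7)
The plan is to choose orthonormal bases adapted to the block decomposition of $\sog(\nn)=\sog(\mg)$ and to compute each bracket explicitly.  Fix a Euclidean inner product on $\mg=\mg_1\oplus\dots\oplus\mg_l$ with orthonormal bases $\{Y_a^i\}_a$ of each $\mg_i$, and recall that $Y\wedge Z\in\sog(\mg)$ is defined by $(Y\wedge Z)(W)=(Y,W)Z-(Z,W)Y$.  A short computation gives $\tr\big((Y\wedge Z)(Y'\wedge Z')\big)=2(Y,Z')(Z,Y')-2(Y,Y')(Z,Z')$, so with respect to $-\kil_{\sog(\nn)}=-(\nn-2)\tr$ the vectors
$$
v_{ab}^{(ij)}:=\tfrac{1}{\sqrt{2(\nn-2)}}\,Y_a^i\wedge Y_b^j
$$
form an orthonormal basis of $\pg_{(ij)}$.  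The derivation rule $[X,Y\wedge Z]=(XY)\wedge Z+Y\wedge(XZ)$ for any $X\in\sog(\mg)$ is the only bracket identity required.

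For the triple-index constants this rule gives $[Y_a^i\wedge Y_b^j,\,Y_{a'}^i\wedge Y_c^k]=\delta_{aa'}\,Y_b^j\wedge Y_c^k$ whenever $\#\{i,j,k\}=3$, so that $\la[v_{ab}^{(ij)},v_{a'c}^{(ik)}],v_{bc}^{(jk)}\ra=\delta_{aa'}/\sqrt{2(\nn-2)}$, and squaring and summing produces the value $\nn_i\nn_j\nn_k/(2(\nn-2))$.  For the mixed constants $[(ij)(ij)(ii)]$ and $[(ij)(ij)(ii)^\epsilon]$, the fact that $X\in\sog(\mg_i)$ annihilates $\mg_j$ yields $[X,Y_a^i\wedge Y_b^j]=(XY_a^i)\wedge Y_b^j$, from which $\la[X,v_{ab}^{(ij)}],v_{a'b'}^{(ij)}\ra=(XY_a^i,Y_{a'}^i)\,\delta_{bb'}$.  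Summing the square over $a,b,a',b'$ produces $\nn_j\cdot(-\tr(X^2))$, and the normalization $-(\nn-2)\tr(X^2)=1$ for a unit vector $X\in\pg_{(ii)^*}$, followed by summation over an orthonormal basis of $\pg_{(ii)^*}$, delivers $\nn_j\,d_{(ii)^*}/(\nn-2)$.

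The remaining families involve only the diagonal block $\pg_{(ii)^*}\subset\sog(\mg_i)$, in which the bracket in $\sog(\nn)$ coincides with the bracket in $\sog(\mg_i)$.  The rescaling $\kil_{\sog(\nn)}|_{\sog(\mg_i)}=\tfrac{\nn-2}{\nn_i-2}\kil_{\sog(\mg_i)}$ inserted into formula \eqref{SC} produces the identity
$$
[(ii)^{*}(ii)^{*}(ii)^{*}]^{\SO(\nn)}=\tfrac{\nn_i-2}{\nn-2}\,[(ii)^{*}(ii)^{*}(ii)^{*}]^{\SO(\nn_i)}
$$
for any choice of three subscripts among the $\pg_{(ii)^*}$.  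For $i\leq l_1$ this yields the four Grassmannian formulas directly in terms of the constants $[111]$ and $[112]$ of $\SO(\nn_i)/K_i$ studied in \S\ref{A.3a}--\S\ref{A.3b} (using $[111]=[222]$ and $[112]=[122]$).  For $i>l_2$, the space $\SO(\nn_i)/K_i$ is isotropy irreducible, so its Einstein constant equals $\rho_i=\tfrac14+\tfrac{\dim\fk_i}{\nn_i(\nn_i-2)}$ by applying \eqref{eq-conceptual:rho} to the standalone space, and formula \eqref{rhoijk} with a single summand gives $[(ii)(ii)(ii)]^{\SO(\nn_i)}=4d_{(ii)}(\tfrac12-\rho_i)$; substitution then produces the stated value $\tfrac{d_{(ii)}}{\nn-2}(\nn_i-2-\tfrac{4\dim\fk_i}{\nn_i})$.

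The main technical obstacle is keeping three different normalizations in play simultaneously: the trace pairing on $\sog(\nn)$, its Killing form rescaled by the factor $\nn-2$, and the Killing form of each $\sog(\mg_i)$ rescaled by $\nn_i-2$.  Once the factor $\tfrac{\nn_i-2}{\nn-2}$ relating the ambient and the local Killing forms is correctly inserted, every structural constant reduces either to an elementary bracket of simple tensors $Y_a^i\wedge Y_b^j$ in $\sog(\mg)$ or to a straightforward repackaging of quantities already known for the smaller space $\SO(\nn_i)/K_i$.
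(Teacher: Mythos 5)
Your proposal is correct and follows essentially the same route as the paper's proof: the same $-\kil_{\sog(\nn)}$-orthonormal basis of decomposable elements normalized by $\tfrac{1}{\sqrt{2(\nn-2)}}$ (written as $Y_a^i\wedge Y_b^j$ rather than $E_{\alpha,\beta}-E_{\beta,\alpha}$), the same rescaling factor $\tfrac{\nn_i-2}{\nn-2}$ relating $\kil_{\sog(\nn_i)}$ to $\kil_{\sog(\nn)}$ for the diagonal-block constants, and the same identification of $[(ii)(ii)(ii)]$ for $i>l_2$ via the two expressions \eqref{rhoci}/\eqref{eq-conceptual:rho} and \eqref{rhoijk} for the Einstein constant of the isotropy irreducible space $\SO(\nn_i)/K_i$. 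The only difference is that you spell out the bracket and trace identities for wedge products that the paper leaves as "straightforward computations"; all of them check out.
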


\begin{proof}
Recall that the constants $[111]$ and $[112]$ were introduced in Remark~\ref{rem-caso5:[111]y[112]}.  We consider the usual $-\kil_{\so(\nn)}$-orthonormal basis of $\sog(\nn)$ (recall that $\kil_{\so(\nn)}(X,Y)=(\nn-2)\tr(XY)$) given by
$ 
\{X_{\alpha,\beta}:=\tfrac{1}{\sqrt{2(\nn-2)}} (E_{\alpha,\beta}-E_{\beta,\alpha}): 1\leq \alpha<\beta\leq \nn
\}, 
$ 
which provides, for each $\fp_{(ij)}$ with $i\neq j$, the $-\kil_{\so(\nn)}$-orthonormal basis  
$ 
\{
X_{\alpha,\beta}: 
1\leq \alpha - \widetilde \nn_i\leq \nn_{i}, \; 
1\leq \beta - \widetilde \nn_j\leq \nn_{j}
\},
$ 
where $\widetilde \nn_k := \nn_1+\dots+\nn_{k-1}$.  Using that $[X_{\alpha,\beta}, X_{\beta,\gamma}] 
= \tfrac{1}{\sqrt{2(\nn-2)}} X_{\alpha,\gamma}$ for all $\#\{\alpha,\beta,\gamma\}=3$, a straightforward computation gives the formula for $[(ij)(ik)(jk)]$.  

Suppose that $i\leq l_1$. 
Let $\{Y_h^{(ii)^\epsilon}\}_h$ be a $-\kil_{\so(\nn)}$-orthonormal basis of $\fp_{(ii)^\epsilon}=V_{i}^\epsilon\subset \so(\nn_i)\subset \so(\nn)$.
Since $\kil_{\so(\nn_i)} = \tfrac{\nn_i-2}{\nn-2} \kil_{\so(\nn)}|_{\so(\nn_i)}$, we have that $\left\{ \tfrac{\sqrt{\nn-2}}{\sqrt{\nn_i-2}} Y_h^{(ii)^\epsilon}\right\}_h$ is $-\kil_{\so(\nn_i)}$-orthonormal.  
This implies that  	
\begin{align*}
[(ii)^1(ii)^1(ii)^1] &= \tfrac{\nn_i-2}{\nn-2} [111], &
[(ii)^1(ii)^1(ii)^2] &= \tfrac{\nn_i-2}{\nn-2} [112], 
\\
[(ii)^2(ii)^2(ii)^2] &= \tfrac{\nn_i-2}{\nn-2} [222], &
[(ii)^1(ii)^2(ii)^2] &= \tfrac{\nn_i-2}{\nn-2} [122].
\end{align*}
The corresponding assertions for these terms follow since $[111]=[222]$ and $[112]=[122]$ (see Remark~\ref{rem-caso5:[111]y[112]}).  Similarly, for $i>l_2$, one has that 
$$
[(ii)(ii)(ii)] = \tfrac{\nn_i-2}{\nn-2} [111]_i, 
$$
where $[111]_i$ is the structural constant of the isotropically irreducible space $\SO(\nn_i)/K_i$, which can be computed using formulas \eqref{rhoci} and \eqref{rhoijk} for its Einstein constant as follows:  
$$
\frac14+\frac{\dim\fk_i}{\nn_i(\nn_i-2)}
= \frac12 - \frac{1}{4d_{(ii)}} [111]_i.  
$$
This proves the formula for $[(ii)(ii)(ii)]$.  

Let $p=(ii)^\epsilon$ for some $i\leq l_1$ or $p=(ii)$ for some $i>l_2$. 
We have that
\begin{align*}
[(ij)(ij)p]&
= 
\sum_{\substack{1\leq \alpha-\widetilde\nn_i\leq \nn_i \\ 1\leq \beta-\widetilde\nn_j\leq \nn_j }}
\sum_{\substack{1\leq \gamma-\widetilde\nn_i\leq \nn_i \\ 1\leq \delta-\widetilde\nn_j\leq \nn_j }}
\sum_{h=1}^{d_{p}} (-\kil_{\so(\nn)})\big([X_{\alpha,\beta}, X_{\gamma,\delta}],Y_h^p\big)^2
\\ 
&
= \frac{1}{2(\nn-2)}
\sum_{\substack{1\leq \alpha-\widetilde\nn_i\leq \nn_i \\ 1\leq \gamma-\widetilde\nn_i\leq \nn_i }}
\sum_{1\leq \beta-\widetilde\nn_j\leq \nn_j}
\sum_{h=1}^{d_{p}} (-\kil_{\so(\nn)})\big(X_{\alpha,\gamma} , Y_h^p\big)^2
\\ &
= \frac{\nn_j}{2(\nn-2)}
\sum_{\substack{1\leq \alpha-\widetilde\nn_i\leq \nn_i \\ 1\leq \gamma-\widetilde\nn_i\leq \nn_i }}
\sum_{h=1}^{d_{p}} (-\kil_{\so(\nn)})\big( X_{\alpha,\gamma} , Y_h^p\big)^2
= \frac{\nn_jd_{p}}{(\nn-2)},
\end{align*}
concluding the proof.  
\end{proof}

We are now ready to compute the $r\times r$ matrix $[\lic_\pg]$ of the Lichnerowicz Laplacian restricted to $\sca^2(M)^{\SO(\nn)}$ (see \eqref{Lpijk}).  Recall that the set of $r$ indexes, which will be denoted by $\II$, where $r=2l_1 +l-l_2+\tfrac{l(l-1)}{2}$, consists of  
$(ii)^1,(ii)^2$ for $1\leq i\leq l_1$, $(ii)$ for $l_2+1\leq i\leq l$ and $(ij)$ for each pair $1\leq i\leq j\leq l$.  

\begin{proposition}\label{som2}
The nonzero coefficients of the matrix $[\lic_\fp]$ are given by
\begin{align*}
[\lic_\fp]_{(ij),(ij)} &
=\tfrac{1}{(\nn-2)} 
\left(
\nn-\nn_i-\nn_j +
\tfrac{d_{(ii)}}{\nn_i}+ \tfrac{d_{(jj)}}{\nn_j} 
\right), \quad i\ne j, 
\\
[\lic_\fp]_{(ii),(ii)} &
= \tfrac{\nn-\nn_i}{\nn-2}, \quad  i>l_2, 
\\ 	
[\lic_\fp]_{(ii)^1,(ii)^1} &=[\lic_\fp]_{(ii)^2,(ii)^2}
=  \tfrac{\nn-\nn_i}{\nn-2}+\tfrac{4[112]}{d_{(ii)}} \tfrac{\nn_i-2}{\nn-2}, \quad  i\leq l_1, 
\\ 	
[\lic_\fp]_{(ii)^1,(ii)^2} &
=  -\tfrac{4[112]}{d_{(ii)}} \tfrac{\nn_i-2}{\nn-2}, \quad  i\leq l_1, 
\\ 	
[\lic_\fp]_{(ij),(ii)} &
=  -\tfrac{1}{\nn-2} \sqrt{d_{ii} \tfrac{\nn_j}{\nn_i} }
, \quad i\ne j, \quad  i>l_2,
\\
[\lic_\fp]_{(ij),(ii)^1} &=[\lic_\fp]_{(ij),(ii)^2}
=  -\tfrac{1}{\nn-2} \sqrt{\tfrac{d_{ii}}{2} \tfrac{\nn_j}{\nn_i} }
, \quad i\ne j, \quad  i\leq l_1,
\\
[\lic_\fp]_{(ij),(ik)} &
	= \tfrac{-\sqrt{\nn_j\nn_k}}{2(\nn-2)},
	\quad\#\{i,j,k\}=3.
\end{align*}
\end{proposition}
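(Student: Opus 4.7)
\medskip

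\noindent\textbf{Proof proposal.} The plan is to apply formula \eqref{Lpijk} directly to the structural constants provided by Lemma~\ref{som1}, doing a careful case-by-case enumeration. For any pair of basis indices $k,m\in\II$, what we need is to identify, from among the six families of nonzero structural constants listed in \eqref{ijk-som}, precisely those $[abk]$ (for the diagonal) with $b\neq k$, or $[ikm]$ (for the off-diagonal), that contribute to the sum, then substitute the values of Lemma~\ref{som1} and simplify. Throughout, we must keep in mind that \eqref{Lpijk} sums over \emph{ordered} pairs (or single indices), so a given unordered triple $\{p,q,r\}$ of distinct subspace indices contributes with the appropriate multiplicity.

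First I would handle the entry $[\lic_\fp]_{(ij),(ij)}$ for $i\neq j$. Inspection of \eqref{ijk-som} shows that a non-zero $[abc]$ with $c=(ij)$ and $b\neq(ij)$ must come from one of: (a) $[(ij)(jk)(ik)]$ with $k\notin\{i,j\}$, which contributes through two ordered pairs $(a,b)$; (b) $[(ii)\,(ij)\,(ij)]$ or its split $(ii)^1,(ii)^2$ version, which contributes through the single ordered pair $(a,b)=((ij),(ii))$; and symmetrically (c) through $[(jj)(ij)(ij)]$. Summing the values from Lemma~\ref{som1} gives
$$\sum_{a,b:b\neq (ij)}[ab(ij)]
= \tfrac{\nn_i\nn_j(\nn-\nn_i-\nn_j)}{\nn-2} + \tfrac{\nn_j d_{(ii)}}{\nn-2} + \tfrac{\nn_i d_{(jj)}}{\nn-2},$$
and dividing by $d_{(ij)}=\nn_i\nn_j$ produces the stated diagonal formula. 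The entries $[\lic_\fp]_{(ii),(ii)}$ with $i>l_2$ and $[\lic_\fp]_{(ii)^\epsilon,(ii)^\epsilon}$ with $i\leq l_1$ are handled in the same spirit: only the families $[(ij)(ij)(ii)]$ (with $j\neq i$) contribute in the first case, while in the second case one gets extra contributions coming from the triples $\{(ii)^1,(ii)^1,(ii)^2\}$ and $\{(ii)^1,(ii)^2,(ii)^2\}$, both evaluating to $\tfrac{\nn_i-2}{\nn-2}[112]$ by Lemma~\ref{som1} together with the identity $[112]=[122]$.

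For the off-diagonal entries, the fixed third slot is $m$ and one sums $[ikm]$ over all $i\in\II$. The entry $[\lic_\fp]_{(ij),(ik)}$ with $\#\{i,j,k\}=3$ receives a single contribution from $i=(jk)$ via the family $[(ij)(jk)(ik)]$; the entry $[\lic_\fp]_{(ij),(ii)}$ (with $i>l_2$) picks up only $i=(ij)$ via $[(ij)(ij)(ii)]$; the entry $[\lic_\fp]_{(ij),(ii)^\epsilon}$ (with $i\leq l_1$) is analogous with the value halved; and $[\lic_\fp]_{(ii)^1,(ii)^2}$ receives two equal contributions from $i=(ii)^1$ and $i=(ii)^2$. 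In each case the factor $-1/\sqrt{d_k d_m}$ from \eqref{Lpijk} combines with $d_{(ij)}=\nn_i\nn_j$, $d_{(ii)^\epsilon}=d_{(ii)}/2$, etc., to give exactly the formulas stated in the proposition.

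The main obstacle is not any single computation but the bookkeeping: one must keep track simultaneously of (i) symmetry of $[\cdot\cdot\cdot]$ under permutations of positions, (ii) the difference between ordered and unordered summation, (iii) the splitting $(ii)=(ii)^1\oplus(ii)^2$ when $i\leq l_1$, and (iv) the exclusion $b\neq k$ in the diagonal formula, which removes exactly the contributions of type $[(ii)^\epsilon(ii)^\epsilon(ii)^\epsilon]$ and $[(ii)(ii)(ii)]$ from the diagonal sums. Once these are properly accounted for, every entry reduces to a direct substitution of Lemma~\ref{som1}.
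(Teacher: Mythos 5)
Your proposal is correct and follows essentially the same route as the paper: apply \eqref{Lpijk} to enumerate, for each entry, the contributing structural constants from \eqref{ijk-som} (with the correct ordered-pair multiplicities and the splitting $(ii)=(ii)^1\oplus(ii)^2$), and then substitute the values from Lemma~\ref{som1}. The bookkeeping points you flag, and the individual contributions you compute, all match the intermediate formulas the paper records before its final substitution.
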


\begin{proof}
It follows from \eqref{Lpijk} and \eqref{ijk-som} that the only nonzero coefficients are
\begin{align*}
[\lic_\fp]_{(ij),(ij)} &
= \tfrac{1}{\nn_i\nn_j} 
\left(
[(ij)(ij)(ii)]+[(ij)(ij)(jj)]\right) +\tfrac{2}{\nn_i\nn_j}\sum_{k\neq i,j} [(ij)(ik)(jk)],
\quad\forall i\neq j,
\\
[\lic_\fp]_{(ii),(ii)} &
= \tfrac{1}{d_{(ii)}} \sum_{j\neq i}[(ii)(ij)(ij)], \quad\forall i>l_2,
\\
[\lic_\fp]_{(ii)^\epsilon,(ii)^\epsilon} &
= \tfrac{2}{d_{(ii)}} 
	\Big(
	[(ii)^{\epsilon}(ii)^{\epsilon+1}(ii)^{\epsilon}]
	+[(ii)^{\epsilon}(ii)^{\epsilon+1}(ii)^{\epsilon+1}]
	+\sum_{j\neq i}[(ii)(ij)(ij)]
	\Big), \quad\forall i\leq l_1,
\\
[\lic_\fp]_{(ii)^1,(ii)^2} &
= -\tfrac{2}{d_{(ii)}} 
\big(
	[(ii)^{1}(ii)^{2}(ii)^{1}]
	+[(ii)^{1}(ii)^{2}(ii)^{2}]
\big), \quad\forall i\leq l_1,
\\
[\lic_\fp]_{(ij),(ii)} &
=- \tfrac{[(ij)(ij)(ii)]}{\sqrt{d_{(ii)}}\sqrt{\nn_i\nn_j}},
\quad\forall i\neq j, \quad i>l_2,
\\ 
[\lic_\fp]_{(ij),(ii)^\epsilon} &
= -\tfrac{\sqrt{2}[(ij)(ij)(ii)^\epsilon]}{\sqrt{d_{(ii)}}\sqrt{\nn_i\nn_j}},
\quad\forall i\neq j, \quad i\leq l_1,
\\ 
[\lic_\fp]_{(ij),(ik)} &
= -\tfrac{1}{\nn_i\sqrt{\nn_j\nn_k}}  [(ij)(ik)(jk)],
\quad\forall\#\{i,j,k\}=3.
\end{align*}
Thus each of the formulas in the proposition can be proved by a straightforward computation using the expressions for the structural constants given in Lemma \ref{som1}.  
\end{proof}

The following proposition provides a large amount of eigenvectors of $\lic_\pg$.  
We recall from \S\ref{preli} that $[\lic_\fp]$ stands for the matrix of $\lic_\fp:\sym(\pg)^K\to \sym(\pg)^K$ with respect to the orthonormal basis $\{I_{r}\}_{r\in\II}$ of $\sym(\pg)^K$ given by $I_r|_{\fp_{q}} =\delta_{r,q}\tfrac{1}{\sqrt{d_r}}I_{\fp_r}$, where $I_{\fp_q}$ is the identity map on $\fp_q$. 

\begin{proposition}\label{som3}
Given indexes $1\leq i,j\leq l$ with $i\neq j$, let $A^{ij}=\sum_{r\in\II}a_r^{ij}I_r$ and $B^{ij}=\sum_{r\in\II}b_r^{ij}I_r$ be elements in $\sym(\pg)^K$ given by 
\begin{align*}
a_r^{ij}&=
\begin{cases}
	\nn_j \sqrt{d_{(ii)}} 
	& r=(ii),\,i>l_2,
	\\[2mm]
	\nn_j {\sqrt{d_{(ii)}/2}}
	& r=(ii)^\epsilon,\,i\leq l_1,
	\\[2mm]
	-\nn_i \sqrt{d_{(jj)}} 
	& r=(jj),\,j>l_2,
	\\[2mm]
	-\nn_i {\sqrt{d_{(jj)}/2}}
	& r=(jj)^\epsilon,\,j\leq l_1,
	\\[2mm]
	\frac{\nn_j-\nn_i}{2} \sqrt{\nn_i\nn_j} 
	& r=(ij),
	\\[2mm]
	\frac{\nn_j}{2} \sqrt{\nn_i\nn_h} 
	& r=(ih),\, h\neq i,j,
	\\[2mm]
	-\frac{\nn_i}{2} \sqrt{\nn_j\nn_h} 
	& r=(jh),\, h\neq i,j,
	\\[2mm]
	0& \text{otherwise,}
\end{cases}
&
b_r^{ij}&=
\begin{cases}
	\sqrt{d_{(jj)}} 
	& r=(ii),\,i>l_2,
\\[2mm]
	\sqrt{d_{(jj)}/2}
	& r=(ii)^\epsilon,\,i\leq l_1,
\\[2mm]
	\sqrt{d_{(jj)}} 
	& r=(jj),\,j>l_2,
\\[2mm]
	\sqrt{d_{(jj)}/2}
	& r=(jj)^\epsilon,\,j\leq l_1,
\\[2mm]
	\frac{-2\sqrt{d_{(ii)}d_{(jj)}}} {\sqrt{\nn_i\nn_j}} 
	& r=(ij),
\\[2mm]
	0& \text{otherwise.}
\end{cases}
\end{align*}
Then, $\lic_\fp(A^{ij})=\tfrac{m}{2(m-2)} A^{ij}$ and $\lic_\fp(B^{ij})= \lambda B^{ij}$, where
$$
\lambda
=\tfrac{\nn-1-\tfrac{2\dim\fk_i}{\nn_i}}{\nn-2}
= \tfrac{\nn-\nn_i+\tfrac{2d_{(ii)}}{\nn_i}}{\nn-2}
. 
$$
\end{proposition}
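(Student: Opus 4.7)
The plan is to verify both eigenvector identities by direct matrix multiplication using the explicit formulas for $[\lic_\fp]$ provided in Proposition~\ref{som2}. Writing $A^{ij}=\sum_{r\in\II} a_r^{ij} I_r$ and $B^{ij}=\sum_{r\in\II} b_r^{ij} I_r$, I would compute, for each $s\in\II$, the $s$-th coordinate
$$
\sum_{r\in\II}[\lic_\fp]_{s,r}\, a_r^{ij} \quad \text{and} \quad \sum_{r\in\II}[\lic_\fp]_{s,r}\, b_r^{ij},
$$
and confirm that they equal $\tfrac{\nn}{2(\nn-2)}\, a_s^{ij}$ and $\lambda\, b_s^{ij}$, respectively. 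Since $[\lic_\fp]$ is very sparse, for each $s$ only finitely many indexes $r$ contribute non-trivially.

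For $A^{ij}$, the support of $a^{ij}$ is $\{(ii),(ii)^\epsilon,(jj),(jj)^\epsilon,(ij),(ih),(jh):h\ne i,j\}$. If $s=(hk)$ with $\{h,k\}\cap\{i,j\}=\emptyset$, both sides vanish. If $s=(hh)$ or $s=(hh)^\epsilon$ with $h\ne i,j$, only $r=(ih)$ and $r=(jh)$ contribute on the left, and their contributions cancel due to the opposite signs of $a_{(ih)}^{ij}$ and $a_{(jh)}^{ij}$, consistent with $a_s^{ij}=0$. The remaining cases, where $s$ lies in the support, reduce to direct substitution of the entries from Proposition~\ref{som2}; the resulting identities are then checked using $d_{(hh)}+\dim\fk_h=\tfrac{\nn_h(\nn_h-1)}{2}$, the Einstein condition $\dim\fk_h/\nn_h=\dim\fk_i/\nn_i$ for all $h$, and $\sum_h \nn_h=\nn$.

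The analysis for $B^{ij}$ is analogous but shorter, since the support is confined to $\{(ii),(ii)^\epsilon,(jj),(jj)^\epsilon,(ij)\}$. The crucial point here is the $(ii)^1$--$(ii)^2$ off-diagonal block of $[\lic_\fp]$, which depends on the unknown structural constant $[112]$ of $\SO(\nn_i)/K_i$: since $b_{(ii)^1}^{ij}=b_{(ii)^2}^{ij}$, the sum of diagonal and off-diagonal contributions at $s=(ii)^\epsilon$ cancels the $[112]$ term cleanly, leaving $\tfrac{\nn-\nn_i}{\nn-2}\,b_{(ii)^\epsilon}^{ij}$ from the diagonal part; combined with the contribution from $r=(ij)$, this produces exactly $\lambda\,b_{(ii)^\epsilon}^{ij}$ once $\lambda$ is rewritten as $\tfrac{\nn-\nn_i+2d_{(ii)}/\nn_i}{\nn-2}$. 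The same mechanism handles the indexes attached to $j$.

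The main obstacle is bookkeeping across the three types of indexes ($h\leq l_1$ Grassmannian, $l_1<h\leq l_2$ sphere, or $h>l_2$ otherwise), since the formulas in Proposition~\ref{som2} take slightly different forms in each type; the verification must be carried out consistently for all combinations that arise. The elegant feature that makes the proposition work is that the unknown constant $[112]$ never enters the final eigenvalues, thanks to the $(ii)^1\leftrightarrow(ii)^2$ symmetry built into both $A^{ij}$ and $B^{ij}$.
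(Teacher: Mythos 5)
Your proposal matches the paper's proof exactly: the paper also verifies the eigenvector identities by direct coordinate-wise multiplication against the matrix $[\lic_\fp]$ from Proposition~\ref{som2}, carrying out one case in detail (namely $B^{ij}$ with $i,j>l_2$) and stating that the remaining index-type combinations follow by the same ``lengthy but straightforward'' bookkeeping. Your observation that the unknown constant $[112]$ drops out because $a^{ij}_{(ii)^1}=a^{ij}_{(ii)^2}$ and $b^{ij}_{(ii)^1}=b^{ij}_{(ii)^2}$ is a correct and useful remark not spelled out in the paper, but the overall argument is the same.
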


\begin{proof}
By applying Proposition~\ref{som2}, lengthy but straightforward computations give the assertions. 
We next give some details in a simple case. 

Suppose that $i,j> l_2$, so $B^{ij}= \sqrt{d_{(jj)}} I_{(ii)} + \sqrt{d_{(ii)}} I_{(jj)}
- {2\sqrt{d_{(ii)}d_{(jj)}}}/ {\sqrt{\nn_i\nn_j}} I_{(ij)}$.
Proposition~\ref{som2} and simple manipulations give
\begin{equation*}
\begin{aligned}
\lic_\fp(B^{ij})&
= \sum_{r\in\II}\left( 
	\sqrt{d_{(jj)}}  [\lic_\fp]_{r,(ii)} I_{r} 
	+ \sqrt{d_{(ii)}} [\lic_\fp]_{r,(jj)} I_{r} 
	- \tfrac{2\sqrt{d_{(ii)}d_{(jj)}}} {\sqrt{\nn_i\nn_j}} [\lic_\fp]_{r,(ij)} I_{r} 
\right)
\\ &
= 
I_{(ii)}
\tfrac{\sqrt{d_{(jj)}}}{\nn-2}
\left(
\nn-\nn_i
+ 2 \tfrac{d_{(ii)}}{\nn_i} 
\right)
+ I_{(jj)}
\tfrac{\sqrt{d_{(ii)}}}{\nn-2}
\left(
\nn-\nn_j
+ 2\tfrac{d_{(jj)}}{\nn_j}
\right)
\\ & \quad 
-I_{(ij)} \tfrac{2\sqrt{d_{(ii)}d_{(jj)}}}{\sqrt{\nn_i\nn_j}(\nn-2)} 
\left(
\nn
-\tfrac{\nn_j}2  
-\tfrac{\nn_i}2  
+ \tfrac{d_{(ii)}}{\nn_i} +\tfrac{d_{(jj)}}{\nn_j}
\right)
%
%
%
%
%
=\lambda B^{ij}.
\end{aligned}
\end{equation*}
It is a simple matter to extend the above proof when $i\leq l_1$ or $j\leq l_1$. 
Furthermore, $B^{ij}=0$ if $l_1<i\leq l_2$ or $l_1<j\leq l_2$, so the assertion holds trivially.
\end{proof}

We are finally in a position to prove the main result of this section.  

\begin{theorem}\label{som-main}
Assume that the standard metric $g_{\kil}$ on the homogeneous space $\SO(\nn)/K$ constructed above is Einstein.  
\begin{enumerate}[{\rm (a)}]
\item $g_{\kil}$ is always $G$-unstable and has coindex 
	$\geq l_1+(l-l_2)-1$.

\item If $l_1+(l-l_2)\geq2$ (i.e., at least two symmetric spaces different from a sphere are involved), then $g_{\kil}$ is a saddle point of $\scalar|_{\mca_1^G}$.

\item If $l_1=l_2=0$, then 
\begin{equation*}
\Spec(\lic_\fp) = 
\big\{
0,
\underbrace{\lambda_\fp ,\dots, \lambda_\fp}_{(l-1)\text{-times}} ,
\underbrace{\lambda_\fp^{\max} ,\dots, \lambda_\fp^{\max}}_{\binom{l}{2}\text{-times}} 
\big\},
\end{equation*}
where  $\lambda_\fp=\tfrac{\nn}{2(\nn-2)}$, $\lambda_\fp^{\max}= \tfrac{1}{(\nn-2)} (\nn-1-\tfrac{2\dim\fk_i}{\nn_i})$.
Moreover, $\lambda_\fp<2\rho<\lambda_\fp^{\max}$.  

\end{enumerate}
\end{theorem}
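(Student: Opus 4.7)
The plan is to exploit the two explicit families of eigenvectors of $\lic_\fp$ supplied by Proposition~\ref{som3}: the $A^{ij}$ with eigenvalue $\lambda_A=\tfrac{\nn}{2(\nn-2)}$ and (for $i$ non-sphere) the $B^{ij}$ with eigenvalue $\lambda_B=\tfrac{\nn-1-2\kappa}{\nn-2}$, where $\kappa=\tfrac{\dim\fk_i}{\nn_i}$ is the common ratio enforced by the Einstein condition. From \eqref{eq-conceptual:rho} one immediately obtains the two clean identities
\begin{equation*}
2\rho-\lambda_A=\tfrac{2\kappa-1}{\nn-2}, \qquad \lambda_B-2\rho=\tfrac{\nn-8\kappa}{2(\nn-2)},
\end{equation*}
which will drive everything. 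An inspection of Table~\ref{giki} combined with Remark~\ref{rem-conceptual:esferas} (which discards the unique $G_i/K_i=\SO(3)/\SO(2)$ with $\kappa=\tfrac12$, since it only yields the already-treated case~1b) shows $\kappa>\tfrac12$ throughout our setting, and both $A^{ij}$ and $B^{ij}$ can be verified traceless by a short cancellation using the uniform identity $d_{(ii)}=\tfrac{\nn_i(\nn_i-1-2\kappa)}{2}$ (valid even for spheres, where $\kappa=\tfrac{\nn_i-1}{2}$ makes $d_{(ii)}=0$), so they lie in $\tca\tca_{g_{\kil}}^G=\sym_0(\fp)^K$.

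For (a), since $\lambda_A<2\rho$, every nonzero $A^{ij}$ is an unstable direction. Fixing a non-sphere index $i_0$ and letting $i$ range over the remaining non-sphere indices, the $l_1+(l-l_2)-1$ elements $A^{i_0 i}$ are linearly independent: the $(ii)$-entry (or $(ii)^1$-entry if $i\leq l_1$) equals $\nn_{i_0}\sqrt{d_{(ii)}}$ (resp.\ $\nn_{i_0}\sqrt{d_{(ii)}/2}$) and vanishes for $A^{i_0 i'}$ with $i'\neq i$. This yields the coindex lower bound. Unconditional $G$-instability follows by noting that some $A^{ij}$ is nonzero in every admissible configuration: the entry formulas force $A^{ij}=0$ only when $i,j$ are both spheres with $\nn_i=\nn_j$ and $l=2$, which makes $M$ either isotropy irreducible or places it under the already-treated cases~1b/2c. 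For (b), pick $i,j$ both non-sphere; then the $(ij)$-entry of $B^{ij}$ is nonzero, and the second identity reduces $\lambda_B>2\rho$ to $\nn>8\kappa$. A case-by-case check through Table~\ref{giki} confirms that every non-sphere entry satisfies $\nn_i>4\kappa$ (the tightest being $\SU(3)/\SO(3)$ and $(H\times H)/\Delta H$ with $\dim H=8$), so $\nn\geq\nn_i+\nn_j>8\kappa$ whenever $l_1+(l-l_2)\geq 2$. Combining this stable direction with the unstable one from (a) gives the saddle-point conclusion.

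Part (c) is the crux. With $l_1=l_2=0$ the space is multiplicity-free and $r=l+\binom{l}{2}$; the $B^{ij}$ for $i<j$ are trivially linearly independent through their unique nonzero $(ij)$-entries, yielding a $\binom{l}{2}$-dimensional eigenspace for $\lambda_B$. The main obstacle is to show $\dim\mathrm{span}\{A^{ij}\}=l-1$, which I will do via the structural identity
\begin{equation*}
A^{ij}=\nn_j\,Y_i-\nn_i\,Y_j, \qquad Y_i:=\sqrt{d_{(ii)}}\,I_{(ii)}+\tfrac{1}{2}\sum_{h\neq i}\sqrt{\nn_i\nn_h}\,I_{(ih)},
\end{equation*}
to be verified entry-by-entry against Proposition~\ref{som3}; moreover $\sum_i Y_i=I_\fp$. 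Since the $Y_i$ are linearly independent (e.g.\ by their $(ii)$-entries) and the antisymmetric map $\wedge^2\R^l\to\R^l$, $e_i\wedge e_j\mapsto \nn_j e_i-\nn_i e_j$, has image exactly the hyperplane orthogonal to $(\nn_1,\dots,\nn_l)$, the span has dimension precisely $l-1$. Counting $1+(l-1)+\binom{l}{2}=r$ shows that $I_\fp$, the $A^{ij}$ and the $B^{ij}$ exhaust $\sym(\fp)^K$; since $\lambda_A\neq\lambda_B$ (immediate from $\nn>8\kappa>2+4\kappa$, which uses $\kappa>\tfrac12$), the spectrum is completely determined, and the strict inequalities $\lambda_\fp<2\rho<\lambda_\fp^{\max}$ are precisely the two identities displayed at the outset.
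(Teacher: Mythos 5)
Your proposal is correct and follows essentially the same route as the paper: both rest entirely on the eigenvectors $A^{ij},B^{ij}$ of Proposition~\ref{som3}, reduce parts (a) and (b) to the inequalities $\kappa>\tfrac12$ and $\nn>8\kappa$ checked against Table~\ref{giki}, and obtain (c) by the same dimension count $1+(l-1)+\binom{l}{2}=r$. The one genuine refinement is your identity $A^{ij}=\nn_j Y_i-\nn_i Y_j$ with $\sum_i Y_i=I_\fp$, which makes the exact $(l-1)$-dimensionality of $\operatorname{span}\{A^{ij}\}$ transparent where the paper merely asserts linear independence of $\{A^{1i}\}$; a very minor quibble is that your parenthetical about the ``tightest'' instances of $\nn_i>4\kappa$ overlooks $F_4/\Spin(9)$ (ratio $16/9$), though the inequality of course still holds there.
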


\begin{proof}
It follows from \eqref{eq-conceptual:rho} that $\tfrac{\nn}{2(\nn-2)}<2\rho$ if and only if 
$\tfrac12<\tfrac{\dim\fk_i}{\nn_i}$.  A simple inspection to Table~\ref{giki} yields that this condition always holds with the only exception of $G_i/K_i=\SO(3)/\SO(2)$ for all $i$, but this case can be omitted according to Remark~\ref{rem-conceptual:esferas}.   
	Proposition~\ref{som3} now implies that $g_{\kil}$ is $G$-unstable. 
	Moreover, the coindex is at least $l_1+(l-l_2)-1$ because $\{A^{ij}: j\leq l_1\text{ or }j>l_2, j\neq i\}$ is clearly linearly independent for any $i$. 

\begin{remark}	
Actually, the coindex is at least $l_1+(l-l_2)$ when $l_1<l_2$ by choosing $i$ such that $l_1<i\leq l_2$. 
\end{remark}

Under the assumption of part (b), Proposition~\ref{som3} ensures the existence of a second nonzero eigenvalue $\lambda$, which satisfies that $\lambda>2\rho$ if and only if $\tfrac{\dim\fk_i}{\nn_i} <\tfrac{\nn}{8}$, which always holds (see Table~\ref{giki}) proving part (b).

Concerning part (c), we first observe that the set $\{B^{ij}: i<j\}$ is clearly linearly independent, thus  the corresponding eigenvalue $\lambda$ has multiplicity at least $\binom{l}{2}$ by Proposition~\ref{som3}. 
Similarly, $\tfrac{\nn}{2(\nn-2)}$ has multiplicity at least $l-1$ because $\{A^{1i}: i>2\}$ is linearly independent. 
Since the kernel is at least $1$-dimensional and the size of $\lic_\fp$ is $\binom{l+1}{2}$, the spectrum has to have the form as stated.  
\end{proof}

\section{Case $E_7/\SU(2)^7$}\label{e7-sec}

This space is listed in Table \ref{tableIB2}, 7.  We consider the root space decomposition 
\begin{equation*}
\ggo_\C=\ft_\C \oplus\bigoplus_{ \alpha\in\Delta(\ggo_\C,\ft_\C)} \ggo_\alpha,
\end{equation*}
where
\begin{align*}
\Delta(\ggo_\C,\ft_\C)
=& \{\pm\ee_i\pm\ee_j: 1\leq i<j\leq 6\}
\cup \{\pm(\ee_7-\ee_8)\} \\
&\cup \left\{ \tfrac12\left(\textstyle{\sum\limits_{i=1}^6} (-1)^{n(i)}\ee_i\pm (\ee_7-\ee_8)\right): \textstyle{\sum\limits_{i=1}^6} n(i)\;\text{is odd}\right\}. 
\end{align*}
Inside the maximal subalgebra $\hg:=\so(12)\oplus\su(2)$ of maximal rank of $\eg_7$ associated to 
\begin{equation*}
\Delta(\hg_\C,\ft_\C) = \{\pm\ee_i\pm\ee_j: 1\leq i<j\leq 6\}\cup \{\pm(\ee_7-\ee_8)\},
\end{equation*}
we describe (the complexification of) the subalgebra $\kg=7\cdot\sug(2)$ as follows:
\begin{align*}
\kg_\C=&
\C H_{\ee_1-\ee_2}\oplus \ggo_{\ee_1-\ee_2} \oplus \ggo_{-(\ee_1-\ee_2)}
\oplus 
\C H_{\ee_1+\ee_2}\oplus \ggo_{\ee_1+\ee_2} \oplus \ggo_{-(\ee_1+\ee_2)}
\oplus
\\ &
\C H_{\ee_3-\ee_4}\oplus \ggo_{\ee_3-\ee_4} \oplus \ggo_{-(\ee_3-\ee_4)}
\oplus 
\C H_{\ee_3+\ee_4}\oplus \ggo_{\ee_3+\ee_4} \oplus \ggo_{-(\ee_3+\ee_4)}
\oplus
\\ &
\C H_{\ee_5-\ee_6}\oplus \ggo_{\ee_5-\ee_6} \oplus \ggo_{-(\ee_5-\ee_6)}
\oplus 
\C H_{\ee_5+\ee_6}\oplus \ggo_{\ee_5+\ee_6} \oplus \ggo_{-(\ee_5+\ee_6)}
\oplus
\\ &
\C H_{\ee_7-\ee_8}\oplus \ggo_{\ee_7-\ee_8} \oplus \ggo_{-(\ee_7-\ee_8)}, 
\end{align*}
where $H_{\alpha}$ is any non-trivial element in $[\mathfrak g_\alpha,\mathfrak g_{-\alpha}]\subset\mathfrak \tg_{\CC}$. Note that each triple $\C H_\alpha\oplus\ggo_\alpha\oplus\ggo_{-\alpha}$ is isomorphic to $\mathfrak{sl}_2(\mathbb C)$.  Thus the $\kil_{\eg_7}$-orthogonal complement $\pg$ of $\kg$ in $\ggo$ is given by $\fp=\fp_1\oplus\dots\oplus\fp_7$, where $(\fp_k)_\C= \bigoplus_{\alpha\in\Delta_k}\ggo_\alpha$ and
{\small  
\begin{align*}
\Delta_{1}&= \{\pm\ee_i\pm\ee_j: i=1,2,\; j=3,4\}, & 
\Delta_{4}&= \{\tfrac12 (\pm(\ee_1-\ee_2) \pm(\ee_3-\ee_4) \pm(\ee_5-\ee_6) \pm(\ee_7-\ee_8))\},\\ 
\Delta_{2}&= \{\pm\ee_i\pm\ee_j: i=1,2,\; j=5,6\}, & 
\Delta_{5}&= \{\tfrac12 (\pm(\ee_1+\ee_2) \pm(\ee_3+\ee_4) \pm(\ee_5-\ee_6) \pm(\ee_7-\ee_8))\}, \\ 
\Delta_{3}&= \{\pm\ee_i\pm\ee_j: i=3,4,\; j=5,6\}, & 
\Delta_{6}&= \{\tfrac12 (\pm(\ee_1-\ee_2) \pm(\ee_3+\ee_4) \pm(\ee_5+\ee_6) \pm(\ee_7-\ee_8))\}, \\ 
&&\Delta_{7}&= \{\tfrac12 (\pm(\ee_1+\ee_2) \pm(\ee_3-\ee_4) \pm(\ee_5+\ee_6) \pm(\ee_7-\ee_8))\}.
\end{align*}}

It is easy to check that each $\pg_i$ is $\Ad(K)$-irreducible and equivalent to the sum of four copies of  standard representations of four of the seven $\sug(2)$'s, which naturally produces the notation
$$
\pg = \pg_{(1234)} \oplus \pg_{(1357)} \oplus \pg_{(1256)} \oplus \pg_{(2457)} \oplus \pg_{(3456)} \oplus \pg_{(1467)} \oplus \pg_{(2367)}, 
$$
where every summand has dimension $16$ (see \cite[pp.579]{WngZll2}).  Note that each pair of these seven $4$-tuples of indexes intersects in exactly two numbers.  It turns out that the Lie bracket between any two summands only projects on the summand obtained after deleting their intersection (e.g., $[\pg_{(1234)},\pg_{(1357)}]\subset \pg_{(2457)}$) and the structural constants are all equal, say to a number $c$.  

It follows from \eqref{Lpijk} that $[\lic_\pg]_{pp}=\frac{3c}{8}$ and $[\lic_\pg]_{pq}=-\frac{c}{16}$ for all $p\ne q$, so
$$
\lic_\pg = \tfrac{7c}{16} I - \tfrac{c}{16}\left[
\begin{matrix}1&\dots&1\\ \vdots&\ddots&\vdots\\ 1&\dots&1\end{matrix}\right],
$$
and thus $\lic_\pg$ has a unique nonzero eigenvalue $\lambda_\pg=\tfrac{7c}{16}$ with multiplicity $6$.  

On the other hand, formula \eqref{rhoijk} implies that $2\rho =1-\tfrac{3c}{16}$.  It is not hard to show that $\kil_{\su(2)} = \tfrac19 \kil_{\fe_7}|_{\su(2)}$, so $\rho=\tfrac{1}{3}$ by \eqref{rhoci}, which gives   
$c=\tfrac{16}{9}$.  We conclude that  
$$
\lambda_\pg = \tfrac{7}{9} > \tfrac{2}{3} = 2\rho, 
$$
that is, $g_{\kil}$ is $G$-stable. 

\begin{remark}
The Casimir operator acts on any irreducible component $\chi_i=4\cdot\pi_1+3\cdot\pi_0$ of $\chi$ as $\cas_{\chi_i}=\frac49\cas_{\pi_1,-\kil_{\sug(2)}} =\frac49 \frac38 I_{\pg_i}=\frac16 I_{\pg_i}$ (here $\pi_1$ and $\pi_0$ respectively denote the standard and trivial representations of each of the seven copies of $\sug(2)$), which confirms that $\rho=\unc+\frac{1}{12}=\tfrac{1}{3}$ (see \eqref{ricgB}).   
\end{remark}

\section{Computing Einstein constants and the spectrum of $\lic_\pg$}\label{rho-sec} 

There are ten spaces for which it was not necessary to know $\rho$ nor $\lambda_\pg$ to obtain their $G$-stability types.  Except for the case Table \ref{tableIB1}, 4, they were all solved using the stability criteria given in \S\ref{Gstab-sec}.   In this section, for the standard metrics on these spaces, we compute the Einstein constants and in most cases also the full spectrum of $\lic_\pg$, as this may be useful in studying other problems.

\subsection{$\sog(3n+2)/\sog(n)\oplus\ug(n+1)$}\label{A.7b} 
(See Table \ref{tableIAA}, 7b. and case (ii) at the end of \S\ref{Gstab-sec}).  
The intermediate subalgebra 
$\fk\subset\hg:=\so(n)\oplus\so(2n+2)\subset\ggo$ gives that the only nonzero structural constant is $[122]$; indeed, $G/H$ and $H/K$ are both symmetric spaces.  One can check that $\rho=\tfrac{5}{12}$ by \eqref{rhoci}.  Now, \eqref{r2-rho} yields
$[122]=\tfrac{n(n+1)}{3}$ and so $\lambda_\pg=\unm$ by \eqref{r2-lambda}.

\subsection{$\eg_6/\sog(3)\oplus\sog(3)\oplus\sog(3)$}\label{B.2} 
(See Table \ref{tableIB1}, 2).  
	Let $\fh_\CC$ be the maximal subalgebra of $(\eg_6)_\CC$ constructed after deleting the only node with three edges in the extended Dynkin diagram. 
	It turns out that $\fk\subset \fh\simeq \sug(3)\oplus\sug(3)\oplus\sug(3)$, where $\sog(3)\subset \sug(3)$ is the standard embedding for each term, thus $\kil_{\sog(3)} = \tfrac16 \kil_{\sug(3)}|_{\sog(3)} = \frac{1}{24}\kil_{\fe_6}|_{\sog(3)}$.
Now formula \eqref{rhoci} implies that $\rho=\tfrac{5}{16}$.

\subsection{$\eg_6/\sug(2)\oplus\sog(6)$}\label{B.4'} 
(See Table \ref{tableIB1}, 4). 
	The intermediate subalgebra $\fk\subset\fh:= \su(6)\oplus\su(2)\subset\mathfrak g$ gives that the only nonzero structural constant is $[122]$.  Since $\kil_{\su(6)} = \tfrac12 \kil_{\fe_6}|_{\su(6)}$ (see \cite[pp.38]{WngZll2}) and  $\kil_{\so(2n)} = \tfrac{n-1}{2n} \kil_{\su(2n)}|_{\so(2n)}$, we obtain that  
	$\kil_{\so(6)} = \tfrac{1}{3} \kil_{\su(6)}|_{\so(6)} = \tfrac{1}{3}.\tfrac12 \kil_{\fe_6}|_{\so(6)}
		=\tfrac{1}{6}\kil_{\fe_6}|_{\so(6)}$,
and it is not hard to see that $\kil_{\su(2)} = \tfrac{1}{6} \kil_{\fe_6} |_{\su(2)}$.  Thus formula \eqref{rhoci} gives that $\rho=\tfrac38$.  

On the other hand, we have that $d_1=20$, $d_2=40$, thus $[122]=10$ by \eqref{r2-rho}.  
Now \eqref{r2-lambda} gives that $\lambda_\fp=\tfrac{3}{4}$.

\subsection{$\eg_8/\sog(9)$}\label{B.9} 
(See Table \ref{tableIB2}, 9).   
Formula \eqref{rhoci} gives that $\rho=\tfrac{13}{40}$ by using that  
	$
	\kil_{\sog(9)} = \tfrac{7}{18} \kil_{\sug(9)}|_{\sog(9)} = \tfrac{7}{18}\tfrac{3}{10}\kil_{\fe_8}|_{\sog(9)} =\tfrac{7}{60}\kil_{\fe_8}|_{\sog(9)}.
	$

\subsection{$\eg_8/4\cdot\sug(3)$}\label{B.12} 
(See Table \ref{tableIB2}, 12). 
	Let $\mathfrak h=\eg_6\oplus \su(3)$ be the maximal subalgebra of $\eg_8$.
	One has that $\mathfrak k\subset\mathfrak h$ by embedding $3\cdot\su(3)$ into $\eg_6$ as explained in \S\ref{B.2}.
We have that $r=4$ and $d_i=54$ for all $i=1,\dots,4$.  One can check that $\kil_{\sug(3)} = \tfrac{1}{10} \kil_{\fe_8}|_{\su(3)}$ for each of the four copies and so $\rho=\tfrac{19}{60}$ by \eqref{rhoci}.  Via a detailed construction of the $\pg_i$'s using the root system of $\eg_8$, it can be shown that 
	$[iii] = \tfrac{36}{5}$ for all $i$,
	$[ijk] = \tfrac{27}{5}$ if $\#\{i,j,k\}=3$, and $[ijk]=0$ otherwise.  
This in particular confirms the value $\rho=\tfrac{19}{60}$ using formula \eqref{rhoijk}.  

It follows from \eqref{Lpijk} that 
	$[\lic_\fp]_{k,k}= \tfrac{3}{5}$ and $[\lic_\fp]_{j,k}=-\tfrac{1}{5}$ for all $j\neq k$, 	
which gives 
\begin{equation*}
[\lic_\fp] = \tfrac{4}{5}\Id - \tfrac{1}{5}
	\begin{pmatrix}1&\dots&1\\ \vdots&\ddots&\vdots\\ 1&\dots&1\end{pmatrix}.  
\end{equation*}
Thus $\lambda_\fp=\tfrac{4}{5}$ and has multiplicity $3$.

\subsection{$\eg_8/4\cdot\sog(3)$}\label{B.13} 
(See Table \ref{tableIB2}, 13).   
	Let $\fh$ be the intermediate subalgebra $4\cdot \su(3)$ considered in \S\ref{B.12}. 
We have that
$
\kil_{\sog(3)} = \tfrac{1}{6} \kil_{\sug(3)}|_{\sog(3)} = \tfrac{1}{6}\tfrac{1}{10}\kil_{\fe_8}|_{\sog(3)} =\tfrac{1}{60}\kil_{\fe_8}|_{\sog(3)},
$  
which implies that 
$\rho=\frac{11}{40}$ by \eqref{rhoci}.

\subsection{$\eg_8/8\cdot\sug(2)$}\label{B.15} 
(See Table \ref{tableIB3}, 15).   
This case is very similar to the space treated in \S\ref{e7-sec}.  If we view $\kg$ as $4\cdot\sog(4)$, then $c_i=\tfrac{1}{15}$ for all $i=1,\dots,4$ since $\kil_{\sog(4)} = \tfrac{1}{5} \kil_{\sog(12)}|_{\sog(4)} = \tfrac{1}{15} \kil_{\fe_8}|_{\sog(4)}$.  According to \eqref{rhoci}, we obtain that $\rho=\tfrac{3}{10}$.

The subalgebra $\kg$ can be described in much the same way as in \S\ref{e7-sec} as a subalgebra of the maximal subalgebra $\sog(16)$ of $\eg_8$ by just adding the copy $\CC H_{e_7+e_8}\oplus\ggo_{e_7+e_8}\oplus\ggo_{-e_7-e_8}$.  There are $14$ $\Ad(K)$-irreducible subspaces of the same dimension $16$,   
\begin{align*}
\fp_{(1234)}, &&\fp_{(5678)}, && \fp_{(1256)}, && \fp_{(3478)}, && \fp_{(1278)},&& \fp_{(3456)},
&& \fp_{(1458)}, \\ 
\fp_{(2367)}, && \fp_{(1467)},&& \fp_{(2358)}, && \fp_{(1357)},&& \fp_{(2468)},&& \fp_{1368)},&& \fp_{(2457)},
\end{align*}
satisfying that 
\begin{equation*}
[\pg_{(a_1a_2a_3a_4)},\pg_{(b_1b_2b_3b_4)}] \subset
\begin{cases}
0 &\text{if }a_i\neq b_j \quad\forall i,j, \\
\pg_{(a_ia_jb_kb_l)} &\text{if }
\begin{cases}
	\{i,j\}=\{h:a_h\neq b_m\;\forall m\},\\
	\{k,l\}=\{h:b_h\neq a_m\;\forall m\}.
\end{cases}
\end{cases}
\end{equation*}
Moreover, all nonzero structural constant are the same, say $b$.  It follows easily from \eqref{Lpijk} that    
	$[\lic_\fp]_{k,k}=\tfrac{3b}{4}$ for all $k$ and, for $j\neq k$, $[\lic_\fp]_{j,k}=0$ if $[\fp_j,\fp_k]=0$ and $[\lic_\fp]_{j,k}=-\tfrac{b}{16}$ otherwise. Hence
$$
\lic_\fp = \tfrac{3b}{4}\Id - \tfrac{b}{16}
\left[\begin{matrix}
0&0&1&1&\dots&1&1\\
0&0&1&1&\dots&1&1\\
1&1&0&0&\dots&1&1\\
1&1&0&0&\dots&1&1\\
\vdots&\vdots&\vdots&\vdots& \ddots&\vdots&\vdots\\
1&1&1&1&\dots&0&0\\
1&1&1&1&\dots&0&0\\
\end{matrix}\right].  
$$
It is a simple matter to conclude that  
\begin{equation*}
\Spec(\lic_\fp) = \Big\{0, \underbrace{\lambda_\fp,\dots, \lambda_\fp}_{\textup{7-times}} ,\underbrace{\lambda_\fp^{\max},\dots, \lambda_\fp^{\max}}_{\textup{6-times}}\Big\}
\qquad\text{where}\quad \lambda_\fp=\tfrac{3b}{4},
\quad\lambda_\fp^{\max}=\tfrac{7b}{8}. 
\end{equation*}
On the other hand, since $\rho=\tfrac{3}{10}$, it follows from \eqref{rhoijk} that $b=\tfrac{16}{15}$ and so $\lambda_\fp=\tfrac{4}{5}$ and $\lambda_\fp^{\max}=\tfrac{14}{5}$.

\subsection{$\eg_8/\sog(5)\oplus\sog(5)$}\label{B.16} 
(See Table \ref{tableIB3}, 16).   
The intermediate subalgebra $\sog(5)\oplus\sog(5)\subset\sog(16)\subset\eg_8$ satisfies that $\kil_{\sog(16)} = \tfrac{7}{15} \kil_{\eg_8}|_{\sog(16)}$, and following the lines of \cite[pp.38--40]{DtrZll}, one can prove that $\kil_{\sog(5)} = \tfrac{3}{28} \kil_{\sog(16)}|_{\sog(5)}$, concluding that $\kil_{\sog(5)} = \tfrac{1}{20} \kil_{\eg_8}|_{\sog(5)}$.  Now formula \eqref{rhoci} gives that $\rho=\tfrac{7}{24}$.

\subsection{$\eg_8/\sug(3)\oplus\sug(3)$}\label{B.17} 
(See Table \ref{tableIB3}, 17).   
We argue in much the same way as in the previous case with the intermediate subalgebra $\sug(3)\oplus\sug(3)\subset\sug(9)\subset\eg_8$.  One has that $\kil_{\sug(9)} = \tfrac{3}{10} \kil_{\eg_8}|_{\sug(9)}$ and it can be shown that $\kil_{\sug(3)} = \tfrac{1}{9} \kil_{\sug(9)}|_{\sug(3)}$ by using \cite[pp.38--40]{DtrZll}.  Thus $\kil_{\sug(3)} = \tfrac{1}{30} \kil_{\eg_8}|_{\sug(3)}$ and so $\rho=\tfrac{17}{60}$ by \eqref{rhoci}.

\begin{table}
{\small 
$$
\begin{array}{c|c|c|c|c|c|c|c}
\text{No.} & \ggo/\kg & \rho & \lambda_\pg & \lambda_\pg^{\midop} & \lambda_\pg^{\max} & \text{C1} & \text{C2}
\\[2mm] \hline \hline \rule{0pt}{14pt}
1a.1 & \tfrac{\sug(3)}{\sg(3\cdot\ug(1))} & \tfrac{5}{12} & \unm & - & -& \text{No} & \text{No}
\\[2mm]  \hline \rule{0pt}{14pt}
1a.2 & \tfrac{\sug(4)}{\sg(4\cdot\ug(1))} & \tfrac{3}{8} & \unm & - & \tfrac{3}{4}& \text{No} & \text{No}
\\[2mm]  \hline \rule{0pt}{14pt}
1a.3 & \tfrac{\sug(n)}{\sg(n\cdot\ug(1))} & \tfrac{n+2}{4n} & \unm & - & \tfrac{n-1}{n}& \text{No} & \text{No}
\\[2mm]  \hline \rule{0pt}{14pt}
1b.1 & \tfrac{\sog(6)}{3\cdot\sog(2)} &\tfrac{3}{8} &  \unm^* & - & \tfrac{3}{4}^* & \text{No} & \text{No}
\\[2mm]  \hline \rule{0pt}{14pt}
1b.2 & \tfrac{\sog(2n)}{n\cdot\sog(2)} &\tfrac{n}{4(n-1)} & \tfrac{n}{2(n-1)}^* & \tfrac{n-2}{n-1}^* &1^*& \text{No} & \text{No}
\\[2mm]  \hline \rule{0pt}{14pt}
2a & \tfrac{\sug(nk)}{\sg(n\cdot\ug(k))} & \tfrac{n+2}{4n} & \unm & - & \tfrac{n-1}{n} &\text{No}&\text{No}
\\[2mm]  \hline \rule{0pt}{14pt}
2b & \tfrac{\spg(nk)}{n\cdot\spg(k)} & \tfrac{(n+2)k+2}{4(nk+1)} & \tfrac{nk}{2(nk+1)}& - &\tfrac{(n-1)k}{nk+1} &\text{No}&\text{No}
\\[2mm]  \hline \rule{0pt}{14pt}
2c & \tfrac{\sog(nk)}{n\cdot\sog(k)} & \tfrac{(n+2)k-4}{4(nk-2)} & \tfrac{nk}{2(nk-2)}&- &\tfrac{(n-1)k}{nk-2} &\text{No}&\text{No}
\\[2mm]  \hline \rule{0pt}{14pt}
 3a  & \tfrac{\sog(n^2)}{\sog(n)\oplus\sog(n)} & \tfrac{n^3+2n-4}{4n(n^2-2)} & \tfrac{n^2-4}{n^2-2} & - & - & \text{No} & \text{No}
\\[2mm]  \hline \rule{0pt}{14pt}
 3b  & \tfrac{\sog(4n^2)}{\spg(n)\oplus\spg(n)} & \tfrac{2n^3+n+1}{4n(2n^2-1)} & & & & \text{No} & \text{No}
\\[2mm]  \hline \rule{0pt}{14pt}
4 & \tfrac{\sog(n)}{\kg} & \tfrac{n+2}{4(n-2)} & \tfrac{n}{2(n-2)}^* & - &1^*& \text{No}&\text{No}
\\[2mm]  \hline \rule{0pt}{14pt}
5 & \tfrac{\sog(\nn)}{\kg_1\oplus\dots\oplus\kg_l} &  \unc+\tfrac{\dim{\kg_i}}{\nn_i(\nn-2)} & \tfrac{\nn}{2(\nn-2)}^*  & - & \tfrac{\nn_i(\nn-1)-\dim{\kg_i}}{\nn_i(\nn-2)}^* 
&\text{No}&\text{No}
\\[2mm]  \hline \rule{0pt}{14pt}
6 & \tfrac{\sug(pq+l)}{\sug(p)\oplus\sug(q)\oplus\ug(l)} & \tfrac{p^2q^2+3p^2+3q^2+1}{4(p^2q^2+p^2+q^2+1)}^* 
& 
\frac{p^2q^2+p^2+q^2+3}{2(p^2q^2+p^2+q^2+1)}^* & - & - & \text{No} &\text{No}
\\[2mm]  \hline \rule{0pt}{14pt}
7a & \tfrac{\spg(3n-1)}{\spg(n)\oplus\ug(2n-1)} & \tfrac{5}{12}^* & \unm^* &- & -& \checkmark  '& \text{No}
\\[2mm]  \hline \rule{0pt}{14pt}
7b & \tfrac{\sog(3n+2)}{\sog(n)\oplus\ug(n+1)} & \tfrac{5}{12}^* & \unm^* &- & -& \checkmark &\text{No}
\\[2mm]  \hline \rule{0pt}{14pt}
8 & \tfrac{\sog(26)}{\spg(1)\oplus\spg(5)\oplus\sog(6)} & \tfrac{29}{80}^* & \tfrac{21}{40}^* &- &- &\text{No} & \text{No} 
\\[2mm]  \hline \rule{0pt}{14pt}
9 & \tfrac{\sog(8)}{\ggo_2} & \tfrac{5}{12} & & & & \checkmark^* & \checkmark^*
%
\\[2mm] \hline\hline
\end{array}
$$}
\caption{\cite[Table IA, pp.577]{WngZll2}.  Einstein constants and spectra of $\lic_\pg$.  In case $7a$, $\checkmark'$ means that the criterion C1 works for any $n\geq 3$, it gives $2\rho\leq\lambda_\pg$ for $n=2$ and it does not work if $n=1$.  $\checkmark^*$ means that the criteria only imply that $\lambda_\pg^{\max}\leq 2\rho$.  
In case 5, we know that the information on the spectra is valid assuming that $l_1=l_2=0$ 
(see Theorem \ref{som-main}).}\label{tableIAA} 
\end{table}

\subsection{Full flag manifolds}\label{ff} 
(See Table \ref{tableIAA}, 1a.3, 1b.1, 1b.2 and Table \ref{tableIB3}, 18a, 18b, 18c).  
Let $G$ be a compact simple Lie group and consider a maximal torus $T\subset G$ with Lie algebra $\tg\subset\ggo$.   We study in this subsection the full flag manifold $M=G/T$.  As usual, we have the root space decomposition 
\begin{equation*}
\ggo_\C = \ft_\C\oplus \bigoplus_{\alpha\in\Delta^+}
	\ggo_{\alpha}\oplus \ggo_{-\alpha}, 
\end{equation*}
and for each $\alpha\in\Delta$, we take $E_\alpha\in \ggo_{\alpha}$ such that $\kil(E_\alpha,E_{-\alpha})=1$ and $[E_{\alpha},E_{-\alpha}]=H_\alpha$, where $H_\alpha\in \ft_\C$ is defined by $\kil(H,H_\alpha)=\alpha(H)$ for all $H\in\ft_\C$.  The $-\kil_\ggo$-orthogonal reductive decomposition for $G/T$ is given by 
\begin{equation}
		\ggo=\ft\oplus \bigoplus_{\alpha\in\Delta^+} \fp_\alpha,
\end{equation}
where $\pg_\alpha$ is $\Ad(T)$-invariant and irreducible and it is generated by the $-\kil_\ggo$-orthonormal basis 
$$
	X_1^{\alpha} := \tfrac{1}{\sqrt 2} (E_\alpha-E_{-\alpha}),\qquad 
	X_2^{\alpha} := \tfrac{\mi}{\sqrt 2} (E_\alpha+E_{-\alpha}).
$$
Note that $G/T$ is therefore multiplicity-free, $r=\#\Delta^+=\tfrac{\dim{\ggo}-\dim{\tg}}{2}$ and $d_1=\dots=d_r=2$.  Using that $[E_\alpha, E_\beta]=N_{\alpha,\beta} E_{\alpha+\beta}$ and that these numbers satisfy that $N_{\alpha,\beta}=-N_{-\alpha,-\beta}$ and $N_{\alpha,\beta}=-N_{\beta,\alpha}$, it is straightforward to show (see \cite[p.~89]{Bhm}, \cite[Section 3]{Skn} or \cite[Proposition 2.3]{ArvChrSkn}) that for all $\alpha,\beta\in\Delta^+$
\begin{equation}\label{eq-fullflag:[p_alpha,p_beta]}
[\fp_\alpha,\fp_\beta] = 
	\fp_{\alpha+\beta}\oplus \fp_{\alpha-\beta},
\end{equation}
where $\fp_\gamma=0$ if $\gamma\notin\Delta$ and $\fp_{-\gamma}=\fp_{\gamma}$ if $\gamma\in \Delta^+$. 
Moreover, for all $\alpha,\beta, \gamma\in\Delta^+$, the structural constant is given by 
\begin{equation}
\label{eq-fullflag:[alphabeta(alpha+beta)]}
[\alpha\beta\gamma] = 
\begin{cases} 
2(N_{\alpha,\beta})^2 & \quad\text{if } \gamma=\alpha+\beta, \\ 
2(N_{\alpha,-\beta})^2 & \quad\text{if either} \; \gamma=\alpha-\beta \; \text{or} \; \gamma=-\alpha+\beta,\\ 
0 & \quad \text{otherwise}.  
\end{cases}
\end{equation}

\begin{table}
{\small 
$$
\begin{array}{c|c|c|c|c|c}
& \sug(n) & \sog(2n) & \eg_6 & \eg_7 & \eg_8 
\\[2mm] \hline \hline \rule{0pt}{14pt}
\kappa_\ggo &2(n-2) & 4(n-2) & 20 & 32 & 56 
\\[2mm] \hline \rule{0pt}{14pt}
\bb_\ggo & \tfrac{1}{n} & \tfrac{1}{2(n-1)} & \tfrac{1}{12} & \tfrac{1}{18} & \tfrac{1}{30} 
%
\\[2mm] \hline\hline
\end{array}
$$}
\caption{Lie theoretical invariants.} \label{table-ff}
\end{table}

It follows from \eqref{ricgB} that $g_{\kil}$ is Einstein on $G/T$ if and only if all the roots have the same length, which holds precisely in the following cases: 
$$
\SU(n)/T^{n-1}, \qquad \SO(2n)/T^n, \qquad E_6/T^6, \qquad E_7/T^7, \qquad E_8/T^8.
$$ 
We assume from now on that $G/T$ is one of these spaces.  Thus all the nonzero structural constants $[\alpha\beta(\alpha\pm\beta)]$ are equal to a number $\bb_\ggo$ (see \cite[(9)]{Skn} or \cite[Remark 2.4]{ArvChrSkn}).  On the other hand, given $\alpha\in\Delta^+$, it is easy to check that the number  
\begin{equation*}
	\kappa_\ggo:=\#\{\beta\in\Delta^+: \alpha+\beta\in\Delta \text{ or } \alpha-\beta\in\Delta\},
\end{equation*}
does not depend on $\alpha$ and its value is as in Table \ref{table-ff}.  It follows from \eqref{rhoijk} that $\rho= \unm-\tfrac{\kappa_\ggo\bb_{\ggo}}{8}$, and since alternatively, $\rho=\unc+\tfrac{\dim{\tg}}{2d}$ by \eqref{rhoci}, we obtain the value of $\bb_\ggo$ as given in Table \ref{table-ff}.  In particular, the Einstein constant $\rho$ of $g_{\kil}$ is respectively given by 
$
\tfrac{n+2}{2n},  \tfrac{n}{2(n-1)},  \tfrac{7}{12}, \tfrac{5}{9},  \tfrac{8}{15}.  
$
We now consider the $r\times r$ matrix $A_\ggo:=[a_{\alpha,\beta}]_{\alpha,\beta\in\Delta^+}$, where $a_{\alpha,\beta}=1$ if $\alpha+\beta\in\Delta$ or $\alpha-\beta\in\Delta$ and $a_{\alpha,\beta}=0$ otherwise.  

\begin{lemma}\label{sec-fullflag:L_p2rho}
	The Lichnerowicz Laplacian of $(G/T,g_{\kil})$ restricted to $\tca\tca_{g_B}^G$ is given by 
	\begin{equation*}
[\lic_\fp]= \tfrac{\bb_{\ggo}}{2} \left(\kappa_\ggo I - A_\ggo \right).  
	\end{equation*}
\end{lemma}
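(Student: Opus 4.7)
The plan is to compute the entries of $[\lic_\pg]$ directly from \eqref{Lpijk}, exploiting the highly uniform structure of the full flag case. First I would note that $M=G/T$ is multiplicity-free: the complexification of $\pg_\alpha$ decomposes as the sum of the $\Ad(T)$-weight spaces with weights $\pm\alpha$, so distinct positive roots produce pairwise inequivalent irreducible summands. Hence $\{I_\alpha\}_{\alpha\in\Delta^+}$ is an orthonormal basis of $\sym(\pg)^T$, and \eqref{Lpijk} gives exactly the matrix entries of $\lic_\pg$, with all $d_\alpha=2$.

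Next I would carry out the bookkeeping of nonzero structural constants. By the assumption that all roots of $\ggo$ have the same length, the root system is simply laced, so for $\alpha,\beta\in\Delta^+$ with $\alpha\ne\beta$, at most one of $\alpha+\beta,\;\alpha-\beta,\;-\alpha+\beta$ lies in $\Delta^+$, and this occurs precisely when $a_{\alpha,\beta}=1$. By \eqref{eq-fullflag:[alphabeta(alpha+beta)]}, in that case there is exactly one $\gamma\in\Delta^+$ giving a nonzero $[\alpha\beta\gamma]$, and its value is $b_\ggo$ (this is the point where the common length of the roots enters, forcing $2(N_{\alpha,\beta})^2=2(N_{\alpha,-\beta})^2=b_\ggo$; compare \cite[Remark 1.5]{ArvChrSkn}).

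With these two observations in hand the computation is mechanical. For the diagonal entry at $\alpha$, the first line of \eqref{Lpijk} yields
\begin{equation*}
[\lic_\pg]_{\alpha,\alpha}=\tfrac{1}{2}\sum_{\beta\ne\alpha,\,\gamma}[\beta\gamma\alpha]=\tfrac{b_\ggo}{2}\cdot\#\{\beta\in\Delta^+:a_{\alpha,\beta}=1\}=\tfrac{b_\ggo}{2}\kappa_\ggo,
\end{equation*}
since $2\alpha\notin\Delta$ makes $\beta=\alpha$ irrelevant. For $\alpha\ne\beta$ the second line of \eqref{Lpijk} gives $[\lic_\pg]_{\alpha,\beta}=-\tfrac{1}{2}\sum_\gamma[\gamma\alpha\beta]$, which is $0$ when $a_{\alpha,\beta}=0$ and $-\tfrac{b_\ggo}{2}$ when $a_{\alpha,\beta}=1$. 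Assembling, $[\lic_\pg]=\tfrac{b_\ggo}{2}(\kappa_\ggo I-A_\ggo)$, as claimed.

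There is no real obstacle here; the only delicate point is the counting step showing that each $\beta$ with $a_{\alpha,\beta}=1$ contributes exactly one nonzero triple $[\alpha\beta\gamma]$, which relies on the simply-laced hypothesis. (The restriction to $\tca\tca_{g_B}^G$ rather than $\sym(\pg)^K$ is harmless: $\tca\tca_{g_B}^G=\sym_0(\pg)^T$ since $g_{\kil}$ is naturally reductive, and $\lic_\pg$ preserves this codimension-one subspace, whose orthogonal complement $\RR I_\pg$ sits in $\ker\lic_\pg$ as recorded after \eqref{Lpijk}.)
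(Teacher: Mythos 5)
Your proof is correct and follows essentially the same route as the paper: both apply \eqref{Lpijk} directly, using that $d_\alpha=2$ and that every $\beta$ with $a_{\alpha,\beta}=1$ contributes exactly one nonzero structural constant $[\alpha\beta\gamma]=\bb_\ggo$. The only difference is that you spell out the simply-laced counting argument and the multiplicity-free observation, which the paper leaves implicit in its one-line computation.
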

\begin{proof}
	It follows from \eqref{Lpijk} that 
	$[\lic_\fp]_{\alpha,\alpha}=\tfrac{\kappa_\ggo\bb_{\ggo}}{2}$ for all $\alpha\in\Delta^+$ and, for $\alpha\neq \beta$ in $\Delta^+$, 
	\begin{align*}
		[\lic_\fp]_{\alpha,\beta} &
		=
		\begin{cases}
			-\tfrac{\bb_{\ggo}}{2}&\text{if }\alpha+\beta\in\Delta \text{ or } \alpha-\beta\in\Delta, \\
			0&\text{otherwise,}
		\end{cases}
	\end{align*}
	concluding the proof.  
\end{proof}

In order to compute $\Spec(L_\fp)$, it only remains to know $\Spec(A_\ggo)$, which has already been computed for $\sug(n)$ and $\sog(2n)$ in \cite{stab-tres} and \S\ref{so2n-sec}, respectively.   The exceptional cases were worked out with the software Sage~\cite{Sage}. 

\begin{lemma}\label{lem-fullflag:Spec(A_g)}
The spectrum $\Spec(A_\ggo)$ of $A_\ggo$ is given as follows: 
$\{2, -1,-1\}$ if $\ggo=\sug(3)$,
$\{4, -2,-2,0,0,0\}$ if $\ggo=\sog(6)$,
\begin{align*}
	\{2(n-2), \underbrace{n-4,\dots,n-4}_{(n-1)\text{-times}}, \underbrace{-2,\dots,-2}_{\tfrac{n(n-3)}{2}\text{-times}} \}
	&\qquad\text{if $\ggo=\sug(n)$, $n\geq4$,}
	\\
	\{4(n-2), \underbrace{2(n-4),\dots,2(n-4)}_{(n-1)\text{-times}}, \underbrace{-4,\dots,-4}_{\tfrac{n(n-3)}{2}\text{-times}} , \underbrace{0,\dots,0}_{\tfrac{n(n-1)}{2}\text{-times}} \}
	&\qquad\text{if $\ggo=\sog(2n)$, $n\geq4$,}
	\\
	\{20, \underbrace{2,\dots,2}_{20\text{-times}}, \underbrace{-4,\dots,-4}_{15\text{-times}} \}
	&\qquad\text{if $\ggo=\eg_6$,}
	\\
	\{32, \underbrace{4,\dots,4}_{27\text{-times}}, \underbrace{-4,\dots,-4}_{35\text{-times}} \}
	&\qquad\text{if $\ggo=\eg_7$,}
	\\
	\{56, \underbrace{8,\dots,8}_{35\text{-times}}, \underbrace{-4,\dots,-4}_{84\text{-times}} \}
	&\qquad\text{if $\ggo=\eg_8$.}
\end{align*}
\end{lemma}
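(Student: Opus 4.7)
The plan is to recognize $A_\ggo$ as the adjacency matrix of the graph $X_\ggo$ on vertex set $\Delta^+$ in which $\alpha$ and $\beta$ are adjacent if and only if $\alpha+\beta\in\Delta$ or $\alpha-\beta\in\Delta$, so that $\Spec(A_\ggo)$ depends only on the combinatorial structure of $X_\ggo$ and the proof reduces to identifying this graph case by case.

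For $\ggo=\sug(n)$, the positive roots are $e_i-e_j$ with $i<j$ and a direct check shows that $(e_i-e_j)\pm(e_k-e_l)\in\Delta$ precisely when the $2$-subsets $\{i,j\}$ and $\{k,l\}$ share exactly one element. Thus $X_\ggo$ is the Johnson graph $J(n,2,1)$, whose spectrum was already recorded in \S\ref{so2n-sec}: for $n\geq 4$ one obtains the eigenvalues $2(n-2),\, n-4,\, -2$ with multiplicities $1,\, n-1,\, \tfrac{n(n-3)}{2}$, while for $n=3$ the graph is $K_3$ with spectrum $\{2,-1,-1\}$, giving exactly the $\sug(n)$ rows of the table.

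For $\ggo=\sog(2n)$, split the positive roots into $\Delta^+_-=\{e_i-e_j:i<j\}$ and $\Delta^+_+=\{e_i+e_j:i<j\}$, each of size $\binom{n}{2}$. A case analysis over the four sign patterns shows that for any two distinct positive roots $\alpha,\beta$ with underlying index sets $S_\alpha,S_\beta\subset\{1,\dots,n\}$, the condition $\alpha+\beta\in\Delta$ or $\alpha-\beta\in\Delta$ is equivalent to $|S_\alpha\cap S_\beta|=1$, independently of the signs occurring in $\alpha$ and $\beta$ (the only subtle case is $S_\alpha=S_\beta$ with $\alpha\neq\pm\beta$, where $\alpha\pm\beta=\pm 2e_i$ or $\pm 2e_j$, which are not roots of $\sog(2n)$). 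Ordering the basis so that the roots in $\Delta^+_-$ come first, this yields
\[
A_\ggo=\left[\begin{matrix}\Adj(J(n,2,1))&\Adj(J(n,2,1))\\ \Adj(J(n,2,1))&\Adj(J(n,2,1))\end{matrix}\right],
\]
and the desired spectrum is then read off from \S\ref{so2n-sec}: each eigenvalue of $\Adj(J(n,2,1))$ is doubled and an additional $\binom{n}{2}$ zero eigenvalues appear, recovering the $\sog(2n)$ and $\sog(6)$ entries.

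For the exceptional Lie algebras $\fe_6,\fe_7,\fe_8$, no comparable combinatorial description is available, so the plan is to construct $A_\ggo$ from an explicit coordinate realization of the root system and diagonalize it with computer algebra (e.g.\ Sage \cite{Sage}), then read off the multiplicities. I expect the main obstacle to be the bookkeeping for the $\sog(2n)$ case, in particular verifying uniformly that the cross-block adjacency rule agrees with the within-block rule; beyond that, the result is essentially a consequence of the known strongly regular structure of the Johnson graph together with a direct verification for the three exceptional types.
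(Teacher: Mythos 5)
Your proposal is correct and follows essentially the same route as the paper: the paper also obtains the classical-type spectra by identifying $A_\ggo$ with (or with the doubled block form of) the adjacency matrix of the Johnson graph $J(n,2,1)$, whose strongly-regular spectrum is invoked in \S\ref{so2n-sec}, and likewise treats the exceptional types by direct computation in Sage. Your write-up is merely more explicit about the root-combinatorial verification of the adjacency rule (correctly noting the subtle non-adjacency of $e_i-e_j$ and $e_i+e_j$ in $\sog(2n)$) than the paper's terse citations to~\cite{stab-tres} and \S\ref{so2n-sec}.
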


The following corollary follows from the above two lemmas.  

\begin{corollary}
The spectrum of the Lichnerowicz Laplacian of $(G/T,g_{\kil})$ restricted to $\tca\tca_{g_B}^G$ is given as in Table \ref{tableIAA}, 1 and Table \ref{tableIB3}, 18.
\end{corollary}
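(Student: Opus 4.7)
My plan is to deduce the corollary directly from Lemmas \ref{sec-fullflag:L_p2rho} and \ref{lem-fullflag:Spec(A_g)} by an affine transformation of spectra. Since
\begin{equation*}
[\lic_\fp] = \tfrac{\bb_\ggo}{2}\bigl(\kappa_\ggo I - A_\ggo\bigr),
\end{equation*}
the eigenvalues of $\lic_\fp$ are obtained from those of $A_\ggo$ via the affine map $\mu\mapsto \tfrac{\bb_\ggo}{2}(\kappa_\ggo-\mu)$, with matching multiplicities. In every case listed in Lemma \ref{lem-fullflag:Spec(A_g)} the Perron eigenvalue $\kappa_\ggo$ of $A_\ggo$ is simple, hence it produces a one-dimensional kernel of $\lic_\fp$; this kernel is precisely the line spanned by the identity $I_\fp \in \sym(\fp)^T$, in accordance with the general remark after \eqref{Lpijk}. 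Note also that the map $\mu\mapsto \tfrac{\bb_\ggo}{2}(\kappa_\ggo-\mu)$ reverses order, so the smallest nonzero eigenvalue $\lambda_\fp$ of $\lic_\fp$ corresponds to the largest non-Perron eigenvalue of $A_\ggo$, and $\lambda_\fp^{\max}$ to its smallest eigenvalue.

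It then remains to substitute the numerical values of $(\kappa_\ggo,\bb_\ggo)$ taken from Table \ref{table-ff} together with $\Spec(A_\ggo)$ from Lemma \ref{lem-fullflag:Spec(A_g)}, and check case by case. For $\ggo=\sug(n)$ with $n\geq 4$, the nonzero spectrum of $\lic_\fp$ becomes $\tfrac{1}{2n}\bigl(2(n-2)-(n-4)\bigr)=\tfrac{1}{2}$ with multiplicity $n-1$ and $\tfrac{1}{2n}\bigl(2(n-2)+2\bigr)=\tfrac{n-1}{n}$ with multiplicity $\tfrac{n(n-3)}{2}$, reproducing Table \ref{tableIAA}, 1a.3; the identical computation for the remaining spectra of Lemma \ref{lem-fullflag:Spec(A_g)} handles $\sug(3)$, $\sog(6)$ and $\sog(2n)$ with $n\geq 4$, producing the entries of Table \ref{tableIAA}, 1a.1, 1b.1, 1b.2 (in the $D_n$ case the zero eigenvalue of $A_\ggo$ yields the middle eigenvalue $\tfrac{n-2}{n-1}$). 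For the exceptional cases one evaluates $\mu\mapsto\tfrac{1}{24}(20-\mu)$, $\mu\mapsto\tfrac{1}{36}(32-\mu)$ and $\mu\mapsto\tfrac{1}{60}(56-\mu)$ on the respective spectra, producing the pairs $(\lambda_\fp,\lambda_\fp^{\max})=(\tfrac{3}{4},1),(\tfrac{7}{9},1),(\tfrac{4}{5},1)$ with the multiplicities $(20,15),(27,35),(35,84)$ recorded in Table \ref{tableIB3}, 18a, 18b, 18c.

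Since both input lemmas are already established, no conceptual obstacle remains; the argument is a bookkeeping comparison across the five families of full flag manifolds, and the totals of multiplicities correctly recover the numbers of positive roots $\tfrac{n(n-1)}{2}$, $n(n-1)$, $36$, $63$ and $120$, providing an internal consistency check.
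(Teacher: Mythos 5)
Your proposal is correct and follows exactly the paper's route: the corollary is obtained by combining Lemma \ref{sec-fullflag:L_p2rho} with Lemma \ref{lem-fullflag:Spec(A_g)} via the affine map $\mu\mapsto\tfrac{\bb_\ggo}{2}(\kappa_\ggo-\mu)$, and your case-by-case evaluations (including the multiplicities and the identification of the kernel with $\RR I_\fp$) all check out against Tables \ref{tableIAA} and \ref{tableIB3}. The paper leaves this bookkeeping implicit, so your write-up simply makes explicit what the authors assert "follows from the above two lemmas."
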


\begin{remark}
Alternatively, the number $\bb_\ggo$ (see Table \ref{table-ff}), which has been computed in \cite{Skn} in all the classical cases, can be obtained as follows for $\eg_6$, $\eg_7$ and $\eg_8$.  For $\ggo=\fe_8$, we have 
$$
		\Delta(\ggo_\C,\ft_\C)
		= \{\pm\ee_i\pm\ee_j: 1\leq i<j\leq 8\}
		\cup \left\{ \tfrac12 \sum_{i=1}^8 (-1)^{n(i)} \ee_i: \sum_{i=1}^8 n(i)\;\text{is even}\right\}, 
$$
and let $\so(12)$ denote the subalgebra of $\ggo$ attached to the subset  $\{\pm\ee_i\pm\ee_j: 1\leq i<j\leq 6\}$.  It follows from \cite[pp.37]{DtrZll} that $\kil_{\fe_7}= \tfrac35 \kil_{\fe_8}|_{\fe_7}$ and $\kil_{\so(12)} = \tfrac59 \kil_{\fe_7}|_{\so(12)}$, so $\kil_{\so(12)} = \tfrac59 \tfrac35 \kil_{\fe_8}|_{\so(12)}
=\tfrac13 \kil_{\fe_8}|_{\so(12)}$.  This implies that 
$$
			\bb_\ggo 
			= \tfrac{1}{27}\sum_{i,j,k} (-3\kil_{\so(12)})\big(
			[\sqrt{3}X_i^{\ee_1-\ee_2},  \sqrt{3}X_j^{\ee_2-\ee_3}],  \sqrt{3}X_k^{\ee_1-\ee_3} \big)^2
			=\tfrac13\, \bb_{\so(12)}= \tfrac{1}{30}.
$$
In much the same way, for $\ggo=\fe_7$ one considers the same subalgebra $\so(12)$ and use that 
$\kil_{\so(12)} = \tfrac59 \kil_{\fe_7}|_{\so(12)}$ and for $\ggo=\fe_6$, 
the subalgebra $\so(10)$ attached to $\{\pm\ee_i\pm\ee_j: 1\leq i<j\leq 5\}$, for which one has that $\kil_{\so(10)} = \tfrac23 \kil_{\fe_6}|_{\so(10)}$.   
\end{remark}

\end{document}